\numberwithin{equation}{section}
\numberwithin{figure}{section}
\theoremstyle{plain}
\newtheorem{theorem}{Theorem}[section]
\newtheorem{proposition}[theorem]{Proposition}
\newtheorem{lemma}[theorem]{Lemma}
\theoremstyle{remark}
\newtheorem{remark}[theorem]{Remark}
\newtheorem{example}[theorem]{Example}
\newtheorem*{ack}{Acknowledgement}
\theoremstyle{definition}
\newtheorem{definition}[theorem]{Definition}
\newcommand{\R}{\mathbb{R}}
\newcommand{\N}{\mathbb{N}}
\newcommand{\EE}{\mathcal{E}}
\newcommand{\FF}{\mathcal{F}}
\newcommand{\GG}{\mathcal{G}}
\newcommand{\HH}{\mathcal{H}}
\newcommand{\MM}{\mathcal{M}}
\newcommand{\PP}{\mathcal{P}}
\newcommand{\QQ}{\mathcal{Q}}
\newcommand{\RR}{\mathcal{R}}
\newcommand{\hhh}{\mathtt{h}}
\newcommand{\iii}{\mathtt{i}}
\newcommand{\jjj}{\mathtt{j}}
\newcommand{\kkk}{\mathtt{k}}
\newcommand{\dime}{\textrm{dim}_\psi}
\newcommand{\dimeyla}{\textrm{dim}_{\overline{\psi}}}
\newcommand{\dimeala}{\textrm{dim}_{\underline{\psi}}}
\newcommand{\dimh}{\textrm{dim}_H}
\newcommand{\as}{\underline{s}}
\newcommand{\ys}{\overline{s}}
\newcommand{\apsi}{\underline{\psi}}
\newcommand{\ypsi}{\overline{\psi}}
\newcommand{\pallo}{\circ}
\newcommand{\pois}{\backslash}
\newcommand{\eps}{\varepsilon}
\newcommand{\khii}{\text{\lower -.4ex\hbox{$\chi$}}}
\renewcommand{\emptyset}{\textrm{\O}}
\newcommand{\fii}{\varphi}
\newcommand{\roo}{\varrho}
\begin{document}

\title[On natural invariant measures on generalised
IFS]{On natural invariant measures
on generalised iterated function systems}

\author{Antti K\"aenm\"aki}
\address{Department of Mathematics and Statistics \\
         P.O. Box 35 (MaD) \\
         FIN--40014 University of Jyv\"askyl\"a \\
         Finland}
\email{antakae@maths.jyu.fi\newpage\quad \thispagestyle{empty}}
\newpage\quad \thispagestyle{empty}

\thanks{The author is supported by the Academy of Finland, projects
  23795 and 53292}
\subjclass{Primary 37C45; Secondary 28A80,39B12.}
\date{\today}

\begin{abstract}
We consider the limit set of generalised iterated function systems. Under
the assumption of a natural potential, the so called
cylinder function, we prove the existence of the
invariant probability measure satisfying the equilibrium state.
We motivate this approach by showing that for typical
self--affine sets there exists an ergodic invariant measure having the
same Hausdorff dimension as the set itself.
\end{abstract}

\maketitle

\section{Introduction}

It is well known that applying methods of thermodynamical formalism, we
can find ergodic invariant measures on self--similar and
self--conformal sets satisfying the equilibrium state and having the
same Hausdorff dimension as the set itself. See, for example, Bowen
\cite{bo}, Hutchinson \cite{hu} and Mauldin and Urba\'nski
\cite{mu}. In this work we try to generalise this concept. Our main
objective is to study iterated function systems (IFS) even though we
develop our theory in a more general setting.

We introduce the definition of a cylinder function, which is a crucial
tool in developing the corresponding concept of thermodynamical
formalism for our setting. The use of the cylinder function provides
us a sufficiently general framework to study iterated function systems.
We could also use the notation of subadditive thermodynamical
formalism like in Falconer \cite{fa1}, \cite{fa2} and Barreira
\cite{ba}, but we feel that in studying iterated function systems we
should use more IFS--style notation.
We can think that the idea of the cylinder function is to generalise
the mass distribution, which is well explained in Falconer \cite{fa3}.
Falconer proved in \cite{fa1} that for each approximative equilibrium
state there exists an approximative equilibrium measure, that is,
there is a $k$--invariant measure for which the approximative
topological pressure equals to the sum of the corresponding entropy
and energy. More
precisely, using the notation of this work, for each $t \ge 0$
there exists a Borel probability measure $\mu_k$ such that
\begin{equation}
  \tfrac{1}{k}P^k(t) = \tfrac{1}{k}h_{\mu_k}^k +
                       \tfrac{1}{k}E_{\mu_k}^k(t).
\end{equation}
Letting now $k \to \infty$, the approximative equilibrium state
converges to the desired equilibrium state, but unfortunately we
will lose the invariance. However, Barreira \cite{ba} showed that the
desired equilibrium state can be attained as a supremum, that is,
\begin{equation} \label{eq:suppi}
  P(t) = \sup\bigl( h_\mu + E_\mu(t) \bigr),
\end{equation}
where the supremum is taken over all invariant Borel regular probability
measures. Using the concept of generalised subadditivity, we show that
it is possible to attain the supremum in (\ref{eq:suppi}). We also
prove that this equilibrium measure is ergodic.

We start developing our theory in the symbol space and after
proving the existence of the equilibrium measure, we begin to consider
the geometric projections of the symbol space and the equilibrium
measure. The use of the cylinder function provides us with a significant
generality in producing equilibrium measures for different kind of
settings. A natural question now is: What can we say about the Hausdorff
dimension of the projected symbol space, the so called limit set? To
answer this question we have to assume something on our geometric
projection. We use the concept of an iterated function system for
getting better control of cylinder sets, the sets defining the
geometric projection. To be able to approximate the size of the limit
set, we also need some kind of separation condition
for cylinder sets to avoid too much overlapping among these sets.
Several separation conditions are introduced and relationships
between them are studied in detail. We also study a couple of
concrete examples, namely the similitude IFS, the conformal IFS and
the affine IFS, and we look how our theory turns out in these
particular cases. As an easy consequence we notice that the Hausdorff
dimension of equilibrium measures of the similitude IFS and the
conformal IFS equals to the Hausdorff dimension of the corresponding limit
sets, the self--similar set and the self--conformal set. After proving
the ergodicity and
studying dimensions of the equilibrium measure in our more
general setting, we obtain the same information for ``almost all''
affine IFS's by applying Falconer's result for the Hausdorff dimension of
self--affine sets. This gives a partially positive answer to the
open question proposed by Kenyon and Peres \cite{kp}.

Before going into more detailed preliminaries, let us fix some notation.
As usual, let $I$ be a finite set with at least two elements.
Put $I^* = \bigcup_{n=1}^\infty I^n$ and $I^\infty = I^\N = \{
(i_1,i_2,\ldots) : i_j \in I \text{ for } j \in \N \}$. Thus, if
$\iii \in I^*$, there is $k \in \N$ such that $\iii =
(i_1,\ldots,i_k)$, where $i_j \in I$ for all $j=1,\ldots,k$. We
call this $k$ the \emph{length} of $\iii$ and we denote
$|\iii|=k$. If $\jjj \in I^* \cup I^\infty$, then with the
notation $\iii,\jjj$ we mean the element obtained by juxtaposing
the terms of $\iii$ and $\jjj$. If $\iii \in
I^\infty$, we denote $|\iii|=\infty$, and for
$\iii \in I^* \cup I^\infty$ we put $\iii|_k = (i_1,\ldots,i_k)$
whenever $1 \le k < |\iii|$. We define $[\iii;A] = \{ \iii,\jjj : \jjj
\in A \}$ as $\iii \in I^*$ and $A \subset I^\infty$ and we call the set
$[\iii] = [\iii,I^\infty]$ the \emph{cylinder set of level $|\iii|$}.
We say that two elements $\iii,\jjj \in I^*$ are \emph{incomparable}
if $[\iii] \cap [\jjj] = \emptyset$. Furthermore,
we call a set $A \subset I^*$ incomparable if all its elements
are mutually incomparable. For example, the sets $I$ and $\{
(i_1,i_2),(i_1,i_1,i_2) \}$, where $i_1 \ne i_2$, are incomparable
subsets of $I^*$.

Define
\begin{equation}
  |\iii - \jjj| =
  \begin{cases}
    2^{-\min\{ k-1\; :\; \iii|_k \ne \jjj|_k \}}, \quad &\iii \ne \jjj \\
    0, \quad &\iii = \jjj
  \end{cases}
\end{equation}
whenever $\iii,\jjj \in I^\infty$. Then the couple $(I^\infty,| \cdot |)$ is
a compact metric space. Let us call $(I^\infty,| \cdot |)$ a
\emph{symbol space} and an element $\iii \in I^\infty$ a
\emph{symbol}. If there is no danger of misunderstanding, let us call
also an element $\iii \in I^*$ a symbol. Define the \emph{left shift}
$\sigma : I^\infty \to I^\infty$ by setting
\begin{equation}
 \sigma(i_1,i_2,\ldots) = (i_2,i_3,\ldots).
\end{equation}
Clearly $\sigma$ is continuous and surjective. If $\iii \in I^n$ for
some $n \in \N$, then with the notation $\sigma(\iii)$ we mean the symbol
$(i_2,\ldots,i_n) \in I^{n-1}$. Sometimes, without mentioning it
explicitly, we work also with ``empty symbols'', that is, symbols with
zero length.

For each cylinder we define a cylinder function $\psi_\iii^t :
I^\infty \to (0,\infty)$ depending also on a given parameter $t \ge
0$. The exact definition is introduced at the beginning of the second
chapter. To follow this introduction, the reader is encouraged to keep in
mind the idea of the mass distribution. With the help of the cylinder
function we define a \emph{topological pressure} $P : [0,\infty) \to
\R$ by setting
\begin{equation}
  P(t) = \lim_{n\to\infty} \tfrac{1}{n} \log \sum_{\iii \in I^n}
  \psi_\iii^t(\hhh),
\end{equation}
where $\hhh \in I^\infty$ is some fixed point. Denoting with
$\MM_\sigma(I^\infty)$ the collection of all Borel regular probability
measures on $I^\infty$ which are \emph{invariant}, that is,
$\mu([\iii]) = \sum_{i \in I} \mu([i,\iii])$ for every $\iii \in I^*$,
we define an \emph{energy} $E_\mu : [0,\infty) \to \R$ by setting
\begin{equation}
  E_\mu(t) = \lim_{n \to \infty} \tfrac{1}{n} \sum_{\iii \in I^n}
  \mu([\iii])\log\psi_\iii^t(\hhh)
\end{equation}
and an \emph{entropy} $h_\mu$ by setting
\begin{equation}
  h_\mu = -\lim_{n \to \infty} \tfrac{1}{n} \sum_{\iii \in I^n}
  \mu([\iii])\log\mu([\iii]).
\end{equation}
For the motivation of these definitions, see, for example, Mauldin and
Urba\'nski \cite{mu} and Falconer \cite{fa4}.
For every $\mu \in \MM_\sigma(I^\infty)$ we have $P(t) \ge h_\mu +
E_\mu(t)$, and if there exists a measure $\mu \in \MM_\sigma(I^\infty)$
for which
\begin{equation}
  P(t) = h_\mu + E_\mu(t),
\end{equation}
we call this measure a \emph{$t$--equilibrium measure}. Using the
generalised subadditivity, we will prove the existence of the
$t$--equilibrium measure. We obtain the ergodicity of that measure
essentially because $\mu \mapsto h_\mu + E_\mu(t)$ is an affine
mapping from a convex set whose extreme points are ergodic and
then recalling Choquet's theorem. Applying now
Kingman's subadditive ergodic theorem and the theorem of
Shannon--McMillan, we notice that
\begin{equation}
  P(t) = \lim_{n \to \infty} \tfrac{1}{n} \log
  \frac{\psi^t_{\iii|_n}(\hhh)}{\mu([\iii|_n])}
\end{equation}
for $\mu$--almost all $\iii \in I^\infty$ as $\mu$ is the
$t$--equilibrium measure. Following the ideas of Falconer \cite{fa1},
we introduce an equilibrium dimension $\dime$ for which
$\dime(I^\infty) = t$ exactly when $P(t)=0$. Using the ergodicity, we
will also prove that $\dime(A) = t$ if $P(t)=0$ and $\mu(A)=1$, where
$\mu$ is the $t$--equilibrium measure. In other words, the equilibrium
measure $\mu$ is ergodic, invariant and has full equilibrium dimension.

To project this setting into $\R^d$ we need some kind of geometric
projection. With the geometric projection here we mean mappings
obtained by the
following construction. Let $X \subset \R^d$ be a compact set with
nonempty interior. Choose then a collection $\{ X_\iii : \iii \in I^*
\}$ of nonempty closed subsets of $X$ satisfying
\begin{itemize}
  \item[(1)] $X_{\iii,i} \subset X_\iii$ for every $\iii \in I^*$ and
  $i \in I$,
  \item[(2)] $d(X_\iii) \to 0$, as $|\iii| \to \infty$.
\end{itemize}
Here $d$ means the diameter of a given set. We define a
\emph{projection mapping} to be the function $\pi : I^\infty \to X$,
for which
\begin{equation}
  \{ \pi(\iii) \} = \bigcap_{n=1}^\infty X_{\iii|_n}
\end{equation}
as $\iii \in I^\infty$. The compact set $E = \pi(I^\infty)$ is called a
\emph{limit set}, and if there is no danger of misunderstanding, we call
also the sets $\pi([\iii])$, where $\iii \in I^*$, cylinder
sets. In general, it is really hard to study the geometric properties
of the limit set, for example, to determine
the Hausdorff dimension. We might come
up against the following problems: There is too much overlapping among
the cylinder sets and it is too difficult to approximate the size of
these sets. Therefore we introduce geometrically stable IFS's.
With the \emph{iterated function system (IFS)} we mean the collection
$\{ \fii_i : i \in I \}$ of contractive injections from $\Omega$ to
$\Omega$, for which $\fii_i(X) \subset X$ as $i \in I$. Here $\Omega
\supset X$ is an open subset of $\R^d$. We set
$X_\iii = \fii_\iii(X)$, where $\fii_\iii = \fii_{i_1} \pallo \cdots
\pallo \fii_{i_{|\iii|}}$ as $\iii \in I^*$, and making now a suitable
choice for the mappings $\fii_i$, we can have the limit set $E$ to be
a self--similar set or a self--affine set, for example. Likewise,
changing the choice of the cylinder function, we can have the equilibrium
measure $\mu$ to have different kind of properties, and thus, making a
suitable choice, the measure $m = \mu \pallo \pi^{-1}$ might be useful
in studying the geometric properties of the limit set. If there is no
danger of misunderstanding, we call also the projected equilibrium
measure $m$ an equilibrium measure.
We say that IFS is \emph{geometrically stable} if it satisfies a
bounded overlapping condition and the mappings of IFS satisfy the following
bi--Lipschitz condition: for each $\iii \in I^*$ there exist
constants $0<\as_\iii<\ys_\iii<1$ such that
\begin{equation}
  \as_\iii |x-y| \le |\fii_\iii(x) - \fii_\iii(y)| \le \ys_\iii |x-y|
\end{equation}
for every $x,y \in \Omega$. The exact definition of these constants is
introduced in Chapter 3. To follow this introduction the reader
can think for simplificity that for each $i \in I$ there exist such
constants and
$\as_\iii = \as_{i_1} \cdots \as_{i_{|\iii|}}$ and $\ys_\iii = \ys_{i_1}
\cdots \ys_{i_{|\iii|}}$ as $\iii \in I^*$. The upper and lower
bounds of the bi--Lipschitz condition are crucial for getting upper
and lower bounds for the size of the cylinder sets. The \emph{bounded
overlapping} is satisfied if the cardinality of the set
$\{ \iii \in I^* : \fii_\iii(X) \cap B(x,r) \ne \emptyset \text{ and }
\as_\iii < r \le \as_{\iii|_{|\iii|-1}} \}$ is uniformly bounded as $x
\in X$ and $0<r<r_0=r_0(x)$.

The class of
geometrically stable IFS's includes many interesting cases of IFS's,
for example, a conformal IFS satisfying the OSC and the so called boundary
condition and an affine IFS satisfying the SSC. The open set condition
(OSC) and the strong separation condition (SSC) are commonly used
examples of separation conditions we need to use for having not too
much overlapping among the cylinder sets. We prove that for the
Hausdorff dimension of the limit set of geometrically stable
IFS's, there exist natural upper and lower bounds obtained from the
bi--Lipschitz constants. It is now very tempting to guess that for
geometrically stable IFS's, making a good choice for the cylinder
function, it could be possible to have the same equilibrium dimension
and Hausdorff dimension for the limit set, and thus it would be
possible to obtain
the Hausdorff dimension from the behaviour of the topological
pressure. It has been already proved that this is true for similitude
and conformal IFS's and also for ``almost all'' affine IFS's.
Recalling now that the equilibrium measure has full equilibrium
dimension, we conclude that in many cases, like in ``almost all''
affine IFS's, making a good choice for the cylinder function, we can
have an ergodic invariant measure on the limit set having full
Hausdorff dimension.

\begin{ack}
  The author is deeply indebted to Professor Pertti Mattila for his
  valuable comments and suggestions for the manuscript.
\end{ack}

\section{Cylinder function and equilibrium measure}

In this chapter we introduce the definition of the cylinder
function. Using the cylinder function we are able to define tools of
thermodynamical formalism. In this setting we prove the existence of
a so called equilibrium measure.

Take $t \ge 0$ and $\iii \in I^*$. We call a function $\psi_\iii^t :
I^\infty \to (0,\infty)$ a \emph{cylinder function} if it satisfies the
following three conditions:
\begin{itemize}
  \item[(1)] There exists $K_t \ge 1$ not depending on $\iii$ such that
  \begin{equation}
    \psi_\iii^t(\hhh) \le K_t \psi_\iii^t(\jjj)
  \end{equation}
  for any $\hhh,\jjj \in I^\infty$.
  \item[(2)] For every $\hhh \in I^\infty$ and integer $1 \le j < |\iii|$
  we have
  \begin{equation}
    \psi_\iii^t(\hhh) \le \psi_{\iii|_j}^t\bigl( \sigma^j(\iii),\hhh \bigr)
	\psi_{\sigma^j(\iii)}^t(\hhh).
  \end{equation}
  \item[(3)] For any given $\delta > 0$ there exist constants $0<\as_\delta<1$
	and $0<\ys_\delta<1$ depending only on $\delta$ such that
  \begin{equation}
    \psi_\iii^t(\hhh)\as_\delta^{|\iii|} \le \psi_\iii^{t+\delta}(\hhh) \le
	\psi_\iii^t(\hhh)\ys_\delta^{|\iii|}
  \end{equation}
  for every $\hhh \in I^\infty$. We assume also that $\as_\delta,\ys_\delta
  \nearrow 1$ as $\delta \searrow 0$ and that $\psi_\iii^0 \equiv 1$.
\end{itemize}

Note that when we speak about one cylinder function, we always assume
there is a collection of them defined for $\iii \in I^*$ and $t>0$.
Let us comment on these conditions. The first one is called the
\emph{bounded variation principle (BVP)} and it says that the value of
$\psi_\iii^t(\hhh)$ cannot vary too much; roughly speaking,
$\psi_\iii^t$ is essentially constant. The second condition is called
the \emph{submultiplicative chain rule for the cylinder function} or just
\emph{subchain rule} for short. If the subchain rule is satisfied
with equality, we call it a \emph{chain rule}. The third condition is
there just to guarantee the nice behaviour of the cylinder function with
respect to the parameter $t$. It also implies that
\begin{equation}
  \as_t^{|\iii|} \le \psi_\iii^t(\hhh) \le \ys_t^{|\iii|}
\end{equation}
with any choice of $\hhh \in I^\infty$.

For each $k \in \N$, $\iii \in I^{k*} := \bigcup_{n=1}^\infty I^{kn}$
and $t \ge 0$ define
a function $\psi_\iii^{t,k} : I^\infty \to (0,\infty)$ by setting
\begin{equation}
  \psi_\iii^{t,k}(\hhh) = \prod_{j=0}^{|\iii|/k - 1}
  \psi_{\sigma^{jk}(\iii)|_k}^t\bigl( \sigma^{(j+1)k}(\iii),\hhh \bigr)
\end{equation}
as $\hhh \in I^\infty$. Clearly, now $\psi_\iii^t(\hhh) \le
\psi_\iii^{t,k}(\hhh)$ for every $k \in \N$ and $\iii \in I^{k*}$
using the subchain rule. Note that if the chain rule is satisfied, then
$\psi_\iii^t(\hhh) = \psi_\iii^{t,k}(\hhh)$ for every $k \in \N$ and
that we always have $\psi_\iii^t(\hhh) = \psi_\iii^{t,|\iii|}(\hhh)$.

It is very tempting to see these functions as cylinder functions
satisfying the chain rule on $I^{k*}$. Indeed, straight from the
definitions we get the chain rule and condition (3) satisfied. However,
to get the BVP for $\psi_\iii^{t,k}$ we need better information on the
local behaviour of the function $\psi_\iii^t$. More precisely, we need
better control over the variation of $\psi_\iii^t$ in small scales. We
call a cylinder function from which we get the BVP for $\psi_\iii^{t,k}$
with any choice of $k \in \N$ \emph{smooth cylinder function}.
We say that a mapping $f : I^\infty \to \R$ is a \emph{Dini function}
if
\begin{equation}
  \int_0^1 \frac{\omega_f(\delta)}{\delta} d\delta < \infty,
\end{equation}
where
\begin{equation}
  \omega_f(\delta) = \sup_{|\iii - \jjj| \le \delta} |f(\iii) - f(\jjj)|
\end{equation}
is the \emph{modulus of continuity}. Observe that H\"older continuous
functions are always Dini.

\begin{proposition} \label{thm:kakspisteyks}
  Suppose the cylinder function is Dini. Then it is smooth and
  functions $\psi_\iii^{t,k}$ are cylinder functions satisfying the
  chain rule on $I^{k*}$.
\end{proposition}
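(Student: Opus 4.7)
Three things need verification for the family $\{\psi_\iii^{t,k}\}$ viewed as a cylinder function on $I^{k*}$: the bounded variation principle (BVP, condition (1)), the chain rule replacing the subchain rule (condition (2)), and the $t$-behavior (condition (3)). The paragraph preceding the proposition already observes that (2) and (3) are essentially built into the definition of $\psi_\iii^{t,k}$; I would spell this out and then attack the only substantive point, which is smoothness, i.e., BVP for $\psi_\iii^{t,k}$.

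For the chain rule, I fix $\iii \in I^{kn}$ and a multiple $j = mk$ of $k$ with $1 \le m < n$, and write $\iii = \iii'\iii''$ with $|\iii'| = mk$. In the factorisation
\[
  \psi_\iii^{t,k}(\hhh) = \prod_{l=0}^{n-1} \psi_{\sigma^{lk}(\iii)|_k}^t\bigl(\sigma^{(l+1)k}(\iii),\hhh\bigr),
\]
the first $m$ factors satisfy $\sigma^{lk}(\iii)|_k = \sigma^{lk}(\iii')|_k$ and $\sigma^{(l+1)k}(\iii) = (\sigma^{(l+1)k}(\iii'),\iii'')$, so they reassemble to $\psi_{\iii'}^{t,k}(\iii'',\hhh)$; the remaining $n-m$ factors reindex via $l'=l-m$ to $\psi_{\iii''}^{t,k}(\hhh)$. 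For condition (3), applying the original condition (3) factorwise gives $(\as_\delta^k)^n = \as_\delta^{|\iii|}$ and $(\ys_\delta^k)^n = \ys_\delta^{|\iii|}$ after multiplication, and $\psi_\iii^{0,k}\equiv 1$ is immediate since each factor is $1$.

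The main obstacle is BVP. For $\iii \in I^{kn}$ and $\hhh,\jjj \in I^\infty$, expanding logarithms yields
\[
  \log\frac{\psi_\iii^{t,k}(\hhh)}{\psi_\iii^{t,k}(\jjj)} = \sum_{l=0}^{n-1}\log\frac{\psi_{\sigma^{lk}(\iii)|_k}^t\bigl(\sigma^{(l+1)k}(\iii),\hhh\bigr)}{\psi_{\sigma^{lk}(\iii)|_k}^t\bigl(\sigma^{(l+1)k}(\iii),\jjj\bigr)}.
\]
The key observation is that in the $l$th term the two arguments of $\psi_{\sigma^{lk}(\iii)|_k}^t$ share the common prefix $\sigma^{(l+1)k}(\iii)$ of length $(n-l-1)k$, so their symbol distance does not exceed $2^{-(n-l-1)k}$. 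Reading the Dini hypothesis as providing a nondecreasing modulus $\omega$, uniform in $\iii$, that bounds the modulus of continuity of $\log\psi_\iii^t$ and satisfies $\int_0^1\omega(\delta)/\delta\,d\delta<\infty$, each summand is at most $\omega(2^{-(n-l-1)k})$, and after reindexing $m=n-l-1$ the total is at most $\sum_{m=0}^\infty\omega(2^{-mk})$. A standard comparison of this dyadic series with the Dini integral bounds the sum by $\omega(1) + 2^{k}\int_0^1\omega(\delta)/\delta\,d\delta < \infty$, independent of $\iii$ and $n$. Exponentiating produces the BVP constant for $\psi_\iii^{t,k}$ and completes smoothness. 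I expect the only delicate point to be an honest formulation of what \emph{Dini cylinder function} means uniformly in $\iii$; once that is fixed, the remainder is bookkeeping.
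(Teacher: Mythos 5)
Your proof is correct and follows essentially the same route as the paper: reduce to the BVP, telescope $\log(\psi_\iii^{t,k}(\hhh)/\psi_\iii^{t,k}(\jjj))$ over the $k$-blocks, bound each term via the common prefix of length $(n-l-1)k$, and control the resulting dyadic sum by the Dini integral. The only cosmetic differences are that you spell out the chain rule and condition (3) (which the paper dismisses as immediate from the definitions) and that you phrase the modulus for $\log\psi_\iii^t$ directly, where the paper uses $\log(1+x)\le x$ together with the uniform lower bound $\as_t^k$ over the finitely many $\iii\in I^k$ to achieve the same effect.
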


\begin{proof}
  It suffices to verify the BVP.
  For each $k \in \N$ we denote $\omega_k(\delta) = \max_{\iii \in
  I^k} \omega_{\psi_\iii^t}(\delta)$. Using now the assumption and
  the definitions we have for each $\iii \in I^{k*}$
  \begin{align} \label{eq:diniupperbound}
    \log\psi_\iii^{t,k}(\hhh)&-\log\psi_\iii^{t,k}(\jjj) =
    \sum_{j=0}^{|\iii|/k-1} \log\Biggl( \frac{ 
      \psi_{\sigma^{jk}(\iii)|_k}^t 
      \bigl(\sigma^{(j+1)k}(\iii),\hhh \bigr)}
     {\psi_{\sigma^{jk}(\iii)|_k}^t
      \bigl(\sigma^{(j+1)k}(\iii),\jjj \bigr)} \Biggr)  \notag \\ 
    &= \sum_{j=0}^{|\iii|/k-1} \log\Biggl( 1 +
      \frac{\psi_{\sigma^{jk}(\iii)|_k}^t 
      \bigl(\sigma^{(j+1)k}(\iii),\hhh \bigr) - \psi_{\sigma^{jk}(\iii)|_k}^t 
      \bigl(\sigma^{(j+1)k}(\iii),\jjj \bigr)}{\psi_{\sigma^{jk}(\iii)|_k}^t 
      \bigl(\sigma^{(j+1)k}(\iii),\jjj \bigr)}  \Biggr)  \notag \\
    &\le \as_t^{-k} \sum_{j=0}^{|\iii|/k-1} \Bigl| \psi_{\sigma^{jk}(\iii)|_k}^t 
      \bigl(\sigma^{(j+1)k}(\iii),\hhh \bigr) - \psi_{\sigma^{jk}(\iii)|_k}^t 
      \bigl(\sigma^{(j+1)k}(\iii),\jjj \bigr) \Bigr|  \notag \\
    &\le \as_t^{-k} \sum_{j=0}^{|\iii|/k-1} \omega_k\bigl(
      2^{-(|\iii|-(j+1)k)} \bigr) \\
    &\le \as_t^{-k} \int_0^\infty \omega_k\bigl( 2^{-(\eta-1)k} \bigr)
      d\eta \notag \\
    &= \frac{1}{\as_t^k k\log 2} \int_0^1
      \frac{\omega_k(\delta)}{\delta} d\delta, \notag
  \end{align}
  whenever $\hhh,\jjj \in I^\infty$ by substituting $\eta =
  -\tfrac{1}{k}(\log_2 \delta)+1$ and $d\eta = -(\delta k\log 2)^{-1}
  d\delta$. This gives
  \begin{equation}
    \frac{\psi_\iii^{t,k}(\hhh)}{\psi_\iii^{t,k}(\jjj)} \le K_{t,k},
  \end{equation}
  where the logarithm of $K_{t,k}$ equals to the finite upper bound found
  in (\ref{eq:diniupperbound}).
\end{proof}


Of course, a cylinder function satisfying the chain rule is always
smooth, since the BVP for $\psi_\iii^{t,k}$ is satisfied with the
constant $K_t$. Observe that if we have a cylinder function satisfying
the chain rule, but not the BVP, then the previous proposition gives
us a sufficient condition for the BVP to hold, namely the Dini condition.
Next, we introduce an important property of functions of the following type.
We say that a function $a : \N \times \N \cup \{ 0 \} \to \R$ satisfies
the \emph{generalised subadditive condition} if
\begin{equation}
  a(n_1+n_2,0) \le a(n_1,n_2) + a(n_2,0)
\end{equation}
and $|a(n_1,n_2)| \le n_1C$ for some constant $C$. Furthermore, we say
that this function is \emph{subadditive} if in addition $a(n_1,n_2) =
a(n_1,0)$ for all $n_1 \in \N$ and $n_2 \in \N \cup \{ 0 \}$.


\begin{lemma} \label{thm:gensubadd}
  Suppose that a function $a : \N \times \N \cup \{ 0 \} \to \R$
  satisfies the generalised subadditive condition. Then
  \begin{equation}
    \tfrac{1}{n} a(n,0) \le \tfrac{1}{kn} \sum_{j=0}^{n-1} a(k,j) +
    \tfrac{3k}{n}C 
  \end{equation}
  for some constant $C$ whenever $0<k<n$. Moreover, if this function is
  subadditive, then the limit $\lim_{n \to \infty} \tfrac{1}{n}
  a(n,0)$ exists and equals to $\inf_n \tfrac{1}{n} a(n,0)$.
\end{lemma}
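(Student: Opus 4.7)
The plan is to derive the main inequality by iterating the generalised subadditive condition from several starting offsets and then averaging.

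For each $s \in \{0, 1, \ldots, k - 1\}$, write $n - s = q_s k + r_s$ with $q_s \ge 0$ and $0 \le r_s < k$. When $s \ge 1$, apply the condition once with $n_1 = s$, $n_2 = n - s$ to get $a(n, 0) \le a(s, n - s) + a(n - s, 0)$, and then iterate the relation $a(m, 0) \le a(k, m - k) + a(m - k, 0)$ on $a(n - s, 0)$ in chunks of size $k$ down to $a(r_s, 0)$. With appropriate conventions at boundary cases ($s = 0$ or $r_s = 0$), this yields
\[
  a(n, 0) \le a(s, n - s) + \sum_{l = 1}^{q_s} a(k, n - s - l k) + a(r_s, 0).
\]

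Summing these $k$ inequalities over $s$ puts $k \cdot a(n, 0)$ on the left. The boundary contributions $\sum_s |a(s, n - s)|$ and $\sum_s |a(r_s, 0)|$ are each bounded by $\tfrac{1}{2} C k (k - 1)$ using $|a(n_1, n_2)| \le n_1 C$; here one uses that $r_s \equiv n - s \pmod{k}$, so $s \mapsto r_s$ permutes $\{0, \ldots, k - 1\}$. The key combinatorial observation is that the positions $n - s - l k$ cover each $j \in \{0, \ldots, n - 1\}$ at most once: for each residue $\rho$ modulo $k$ there is a unique $s_\rho \in \{0, \ldots, k - 1\}$ with $n - s_\rho \equiv \rho \pmod{k}$, and the corresponding positions exhaust the residue-$\rho$ positions in $\{0, \ldots, n - 1\}$ except possibly for the top one. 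A short case analysis shows exactly $k - 1$ positions are missed, namely $\{n - 1, n - 2, \ldots, n - k + 1\}$. Combining the three estimates yields
\[
  k \cdot a(n, 0) \le \sum_{j = 0}^{n - 1} a(k, j) + 2 C k (k - 1) \le \sum_{j = 0}^{n - 1} a(k, j) + 3 C k^2,
\]
and dividing by $k n$ gives the stated inequality.

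For the moreover part, subadditivity means $a(k, j) = a(k, 0)$ for every $j$, so the first part specialises to $\tfrac{1}{n} a(n, 0) \le \tfrac{1}{k} a(k, 0) + \tfrac{3 k}{n} C$. Fixing $k$ and letting $n \to \infty$ gives $\limsup_{n \to \infty} \tfrac{1}{n} a(n, 0) \le \tfrac{1}{k} a(k, 0)$ for every $k$, hence $\limsup_{n \to \infty} \tfrac{1}{n} a(n, 0) \le \inf_k \tfrac{1}{k} a(k, 0)$. Since the reverse inequality $\liminf_{n \to \infty} \tfrac{1}{n} a(n, 0) \ge \inf_k \tfrac{1}{k} a(k, 0)$ is immediate, the limit exists and equals the infimum. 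The main obstacle is the combinatorial bookkeeping in the double sum: identifying the covered positions and verifying that precisely $k - 1$ are missed. Once this is in place, the required estimates are direct.
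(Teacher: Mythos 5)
Your proof is correct and follows essentially the same strategy as the paper's: telescope the generalised subadditive condition along blocks of length $k$ for each of the $k$ possible phases, average the resulting inequalities, and absorb the $O(k^2)$ boundary contributions into the $\tfrac{3k}{n}C$ term. The only difference is cosmetic bookkeeping (you index the decomposition by the top offset $s$ and add back the $k-1$ missed positions, whereas the paper indexes by the bottom remainder $q$ and pads each sum so that the index set is exactly $\{0,\ldots,n-1\}$), and the treatment of the subadditive case is identical.
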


\begin{proof}
  We follow the ideas found in Lemma 4.5.2 of Katok and Hasselblatt \cite{ka}.
  Fix $n \in \N$ and choose $0<k<n$. Now for each integer $0\le q <k$
  we define $\alpha(q) = \lfloor \tfrac{n-q-1}{k} \rfloor$ to be the
  integer part of $\tfrac{n-q-1}{k}$. Straight from this definition we
  shall see that $\alpha$ is non--increasing,
  \begin{equation} \label{eq:ekaomin}
    n-k-1 < \alpha(q)k + q \le n-1
  \end{equation}
  and
  \begin{equation} \label{eq:tokaomin}
    \tfrac{n}{k} - 2 < \alpha(q) \le \tfrac{n-1}{k}
  \end{equation}
  whenever $0 \le q < k$. Temporarily fix $q$ and take $0 \le l <
  \alpha(q)$ and $0 \le i < k$. Now
  \begin{equation}
    q-1 < lk+q+i < \alpha(q)k+q
  \end{equation}
  and therefore,
  \begin{equation}
    \{ 0,\ldots,n-1 \} = \{ lk+q+i : 0 \le l < \alpha(q),\; 0\le i<k
                         \} \cup S_q,
  \end{equation}
  where $S_q$ is the union of the sets $S_q^1 = \{ 0,\ldots,q-1 \}$ and
  $S_q^2 = \{ \alpha(q)k+q,\ldots,n-1 \}$. Using (\ref{eq:ekaomin}), we
  notice that $1 \le \#S_q^2 \le k$. It follows from
  (\ref{eq:tokaomin}) that $\alpha(q)$ can attain at maximum two
  values, namely $\lfloor \tfrac{n-1}{k} \rfloor$ and $\lfloor
  \tfrac{n-1}{k} \rfloor - 1$. Let $q_0$ be the largest integer for
  which $\alpha(q_0) = \lfloor \tfrac{n-1}{k} \rfloor$. Then clearly,
  \begin{equation} \label{eq:kolmasomin}
    \{ lk+q : 0 \le l \le \alpha(q),\; 0 \le q < k \} =
    \{ 0,\ldots,\alpha(q_0)k+q_0 \}.
  \end{equation}
  By the choice of $q_0$ it holds also that $\alpha(q_0) =
  (n-q_0-1)/k$ and thus $\alpha(q_0)k + q_0 = n-1$.

  It is clear that $\#S_q^1 = q$. It is also clear that $S_q^2 = \{
  n-k+q,\ldots,n-1 \}$ if $q_0=k-1$. But if not, we notice that
  $\alpha(q_0+1)=\alpha(q_0)-1=(n-q_0-k-1)/k$, and thus $\alpha(q_0+1)k
  + q_0 + 1 = n-k$. Therefore, defining a bijection $\eta$ between sets $\{
  0,\ldots,k-1 \}$ and $\{ 1,\ldots,k \}$ by setting
  \begin{equation}
    \eta(q) =
    \begin{cases}
      q_0-q+1,   \quad &0 \le q \le q_0 \\
      q_0-q+k+1, \quad &q_0 < q < k,
    \end{cases}
  \end{equation}
  we have $\#S_q^2 = \eta(q)$ for all $0 \le q < k$.

  Since $n$ is of the form $\eta(q) + \alpha(q)k + q$ for any $0 \le
  q < k$, we get, using the assumption several times that
  \begin{align}
    a(n,0) &= a\bigl( \eta(q),\alpha(q)k+q \bigr) +
              \sum_{l=1}^{\alpha(q)} a\bigl( k,(\alpha(q)-l)k+q \bigr) +
              a(q,0) \notag \\
           &\le \sum_{l=0}^{\alpha(q)-1} a(k,lk+q) + 2kC \\
           &\le \sum_{l=0}^{\alpha(q)} a(k,lk+q) + 3kC. \notag
  \end{align}
  In fact, we have
  \begin{align} \label{eq:neljasomin}
    \tfrac{1}{n} a(n,0) &\le \tfrac{1}{kn} \sum_{q=0}^{k-1} \Biggl(
    \sum_{l=0}^{\alpha(q)} a(k,lk+q) + 3kC \Biggl) \notag \\
    &= \tfrac{1}{kn} \sum_{j=0}^{n-1} a(k,j) + \tfrac{3k}{n}C
  \end{align}
  using (\ref{eq:kolmasomin}).

  If our function is subadditive, we have
  \begin{equation}
    \limsup_{n \to \infty} \tfrac{1}{n} a(n,0) \le \tfrac{1}{k} a(k,0)
  \end{equation}
  with any choice of $k$ using (\ref{eq:neljasomin}). This also
  finishes the proof.
\end{proof}

Now we define the basic concepts for thermodynamical formalism with the
help of the cylinder function.
Fix some $\hhh \in I^\infty$. We call the following limit
\begin{equation}
  P(t) = \lim_{n \to \infty} \tfrac{1}{n} \log \sum_{\iii \in I^n}
  \psi_\iii^t(\hhh),
\end{equation}
if it exists,
the \emph{topological pressure for the cylinder function} or just
\emph{topological pressure} for short. For each $k \in \N$ we also
denote
\begin{align}
  \overline{P}^k(t)  &= \limsup_{n \to \infty} \tfrac{1}{n} \log
                        \sum_{\iii \in I^{kn}} \psi_\iii^{t,k}(\hhh)
                        \quad \text{and} \notag \\
  \underline{P}^k(t) &= \liminf_{n \to \infty} \tfrac{1}{n} \log
                        \sum_{\iii \in I^{kn}} \psi_\iii^{t,k}(\hhh).
\end{align}
If they agree, we denote the common value with $P^k(t)$.
Recall that the collection of all Borel regular probability measures
on $I^\infty$ is denoted by $\MM(I^\infty)$. Denote
\begin{equation}
  \MM_\sigma(I^\infty) = \{ \mu \in \MM(I^\infty) :
                            \mu \text{ is invariant} \},
\end{equation}
where the invariance of $\mu$ means that $\mu([\iii]) = \mu\bigl(
\sigma^{-1}([\iii]) \bigr)$ for every $\iii \in I^*$.
Now $\MM_\sigma(I^\infty)$ is a 
nonempty closed subset of the compact set $\MM(I^\infty)$ in the weak
topology. For given $\mu \in \MM_\sigma(I^\infty)$ we define an
\emph{energy for the cylinder function} $E_\mu(t)$, or just \emph{energy}
for short, by setting
\begin{equation}
  E_\mu(t) = \lim_{n \to \infty} \tfrac{1}{n} \sum_{\iii \in I^n}
             \mu([\iii]) \log\psi_\iii^t(\hhh)
\end{equation}
provided that the limit exists and an \emph{entropy} $h_\mu$ by setting
\begin{equation}
  h_\mu = \lim_{n \to \infty} \tfrac{1}{n} \sum_{\iii \in I^n}
          H\bigl( \mu([\iii]) \bigr)
\end{equation}
provided that the limit exists,
where $H(x) = -x\log{x}$, as $x > 0$, and $H(0)=0$. Note that $H$ is
concave. For each $k \in \N$ we also denote
\begin{align}
  \overline{E}_\mu^k(t)  &= \limsup_{n \to \infty} \tfrac{1}{n}
                            \sum_{\iii \in I^{kn}} \mu([\iii])
                            \psi_\iii^{t,k}(\hhh) \quad \text{and}
                            \notag \\
  \underline{E}_\mu^k(t) &= \liminf_{n \to \infty} \tfrac{1}{n}
                            \sum_{\iii \in I^{kn}} \mu([\iii])
                            \psi_\iii^{t,k}(\hhh).
\end{align}
If they agree, we denote the common value with $E_\mu^k(t)$. Finally,
we similarly denote
\begin{equation}
  h_\mu^k = \lim_{n \to \infty} \tfrac{1}{n} \sum_{\iii \in I^{kn}}
            H\bigl( \mu([\iii]) \bigr).
\end{equation}
Let us next justify the existence of these limits using the power of
subadditive sequences. We will actually prove a little more than just
subadditivity as we can see from the following lemma.

\begin{lemma} \label{thm:kaksisubjonoa}
  For any given $\mu \in \MM(I^\infty)$ the following functions
  \begin{itemize}
    \item[(1)] $(n_1,n_2) \mapsto \sum_{\iii \in I^{n_1}} H\bigl( \mu \pallo
               \sigma^{-n_2}([\iii]) \bigr)$ \text{ and}
    \item[(2)] $(n_1,n_2) \mapsto \sum_{\iii \in I^{n_1}} \mu \pallo
               \sigma^{-n_2}([\iii]) \log\psi_\iii^t(\hhh) + \log K_t$
  \end{itemize}
  defined on $\N \times \N \cup \{ 0 \}$ satisfy the generalised
  subadditive condition. Furthermore, if $\mu \in \MM_\sigma(I^\infty)$,
  the functions are subadditive.
\end{lemma}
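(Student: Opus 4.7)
The plan is to verify both parts of the generalised subadditive condition directly from the structural properties of cylinders. For any $\iii \in I^{n_1+n_2}$ I would split $\iii = (\jjj,\kkk)$ with $\jjj = \iii|_{n_2} \in I^{n_2}$ and $\kkk = \sigma^{n_2}(\iii) \in I^{n_1}$, so that $[\iii] = [\jjj,\kkk]$ and, as $\jjj$ ranges over $I^{n_2}$ with $\kkk$ held fixed, the union of these cylinders is exactly $\sigma^{-n_2}([\kkk])$. This yields the key identity $\mu \pallo \sigma^{-n_2}([\kkk]) = \sum_{\jjj \in I^{n_2}} \mu([\jjj,\kkk])$, which reduces both claims to two-index sums with prescribed marginals.

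For function (1), write $r_{\jjj,\kkk} = \mu([\jjj,\kkk])$ and observe that its marginals are $\sum_\jjj r_{\jjj,\kkk} = \mu \pallo \sigma^{-n_2}([\kkk])$ and $\sum_\kkk r_{\jjj,\kkk} = \mu([\jjj])$. The standard joint-entropy inequality
\begin{equation*}
\sum_{\jjj,\kkk} H(r_{\jjj,\kkk}) \le \sum_\jjj H\bigl(\textstyle\sum_\kkk r_{\jjj,\kkk}\bigr) + \sum_\kkk H\bigl(\sum_\jjj r_{\jjj,\kkk}\bigr),
\end{equation*}
which is a consequence of concavity of $H(x) = -x\log x$ (equivalently, non-negativity of mutual information), then gives $a(n_1+n_2,0) \le a(n_1,n_2) + a(n_2,0)$ with no correction term. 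The linear bound $|a(n_1,n_2)| \le n_1 C$ would come from Jensen applied to $H$: any non-negative family $(p_\iii)_{\iii \in I^{n_1}}$ with total mass at most one satisfies $\sum_\iii H(p_\iii) \le \log \#I^{n_1} = n_1 \log \#I$.

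For function (2), the subchain rule applied to $\iii = (\jjj,\kkk)$ with $j = n_2$ gives $\psi_\iii^t(\hhh) \le \psi_\jjj^t(\kkk,\hhh)\,\psi_\kkk^t(\hhh)$, after which the BVP replaces $\psi_\jjj^t(\kkk,\hhh)$ by $K_t\psi_\jjj^t(\hhh)$. Taking logarithms, weighting by $\mu([\jjj,\kkk])$, summing and using the marginal identity above would produce
\begin{equation*}
\sum_{\iii \in I^{n_1+n_2}} \mu([\iii])\log\psi_\iii^t(\hhh) \le \log K_t + \sum_{\jjj \in I^{n_2}} \mu([\jjj])\log\psi_\jjj^t(\hhh) + \sum_{\kkk \in I^{n_1}} \mu \pallo \sigma^{-n_2}([\kkk])\log\psi_\kkk^t(\hhh),
\end{equation*}
which matches $a(n_1+n_2,0) \le a(n_1,n_2) + a(n_2,0)$ once the built-in $+\log K_t$ in each of the three instances of $a$ is accounted for; the bookkeeping balances because $K_t \ge 1$ and hence $\log K_t \le 2\log K_t$. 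The linear growth bound follows from property (3) of the cylinder function: $\as_t^{n_1} \le \psi_\iii^t(\hhh) \le \ys_t^{n_1}$ forces $|\log\psi_\iii^t(\hhh)| \le n_1\max(|\log\as_t|,|\log\ys_t|)$, and the $\mu\pallo\sigma^{-n_2}$-weights sum to at most one.

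Finally, if $\mu \in \MM_\sigma(I^\infty)$ then $\mu\pallo\sigma^{-n_2} = \mu$, so $a(n_1,n_2) = a(n_1,0)$ for both functions, giving subadditivity in the strict sense. The only genuinely non-trivial ingredient is the joint-entropy inequality for (1); the rest is a direct unwinding of the subchain rule and BVP. The main hazard I expect is careful bookkeeping of the additive $\log K_t$ in (2), since $a$ is defined with this correction already built in and the inequality only closes up because $K_t \ge 1$.
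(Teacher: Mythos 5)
Your proposal is correct and follows essentially the same route as the paper: the same splitting $\iii=(\jjj,\kkk)$ with the marginal identity $\mu\pallo\sigma^{-n_2}([\kkk])=\sum_{\jjj\in I^{n_2}}\mu([\jjj,\kkk])$, concavity of $H$ for (1), the subchain rule plus BVP for (2), condition (3) of the cylinder function for the linear bound, and invariance for the final claim. The only cosmetic difference is that you invoke subadditivity of joint entropy as a known fact where the paper derives it inline via conditional entropy and concavity of $H$.
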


\begin{proof}
  For every $n_1 \in \N$ and $n_2 \in \N \cup \{ 0 \}$ we have
  \begin{align}
    \sum_{\iii \in I^{n_1+n_2}} H\bigl( \mu([\iii]) \bigr) &=
    -\sum_{\iii \in I^{n_1}}\sum_{\jjj \in I^{n_2}} \mu([\jjj,\iii])
    \log\mu([\jjj,\iii]) \notag \\
    &= -\sum_{\iii \in I^{n_1}}\sum_{\jjj \in I^{n_2}}
    \mu([\jjj,\iii]) \log\frac{\mu([\jjj,\iii])}{\mu([\jjj])} -
    \sum_{\iii \in I^{n_1}}\sum_{\jjj \in I^{n_2}} \mu([\jjj,\iii])
    \log\mu([\jjj]) \notag \\
    &= \sum_{\iii \in I^{n_1}}\sum_{\jjj \in I^{n_2}}
    \mu([\jjj]) H\biggl( \frac{\mu([\jjj,\iii])}{\mu([\jjj])} \biggr)
    + \sum_{\jjj \in I^{n_2}} H\bigl( \mu([\jjj]) \bigr) \\
    &\le \sum_{\iii \in I^{n_1}} H\Biggl( \sum_{\jjj \in I^{n_2}}
    \mu([\jjj,\iii]) \Biggr) + \sum_{\jjj \in I^{n_2}} H\bigl(
    \mu([\jjj]) \bigr) \notag
  \end{align}
  using the concavity of the function $H$. Note that while calculating, we can
  sum over only cylinders with positive measure. Using the concavity again,
  we get
  \begin{align}
    \frac{1}{(\#I)^{n_1}} \sum_{\iii \in I^{n_1}} H\Biggl( \sum_{\jjj
    \in I^{n_2}} &\mu([\jjj,\iii]) \Biggr) \le H\Biggl(
    \frac{1}{(\#I)^{n_1}} \sum_{\iii \in I^{n_1+n_2}} \mu([\iii]) \Biggr)
    \notag \\
    &= \frac{1}{(\#I)^{n_1}} \log(\#I)^{n_1},
  \end{align}
  which finishes the proof of (1).

  For every $n_1 \in \N$ and $n_2 \in \N \cup \{ 0 \}$ we have
  \begin{align}
    \sum_{\iii \in I^{n_1+n_2}} \mu([\iii])\log\psi_\iii^t(\hhh) &\le
    \sum_{\iii \in I^{n_1+n_2}}
    \mu([\iii])\log\psi_{\sigma^{n_2}(\iii)}^t(\hhh) \notag \\
    &\qquad + \sum_{\iii \in I^{n_1+n_2}} \mu([\iii])
    \log\psi_{\iii|_{n_2}}^t(\sigma^{n_2}(\iii),\hhh) \notag \\
    &\le \sum_{\iii \in I^{n_1}} \mu \pallo \sigma^{-n_2}([\iii])
    \log\psi_\iii^t(\hhh) \\
    &\qquad + \sum_{\iii \in I^{n_2}} \mu([\iii])
    \log\psi_\iii^t(\hhh) + \log K_t \notag
  \end{align}
  using the BVP and the subchain rule. From the condition (3) of the
  definition of the cylinder function it follows that
  \begin{equation}
    n_1\log \as_t \le \sum_{\iii \in I^{n_1+n_2}} \mu([\iii])
    \log\psi_{\sigma^{n_2}(\iii)}^t(\hhh) \le n_1\log \ys_t,
  \end{equation}
  which finishes the proof of (2).

  The last statement follows directly from the definition of the
  invariant measure.
\end{proof}

Now we can easily conclude the existence of the previously defined
limits. Compare the following proposition also with Chapter 3 of
Falconer \cite{fa2}.

\begin{proposition} \label{thm:perusomin}
  For any given $\mu \in \MM_\sigma(I^\infty)$ it holds that
  \begin{itemize}
    \item[(1)] $P(t)$ exists and equals to $\inf_n\tfrac{1}{n} \bigl(
    \log\sum_{\iii \in I^n} \psi_\iii^t(\hhh) + C_t \bigr)$ with any $C_t
    \ge \log K_t$,
    \item[(2)] $E_\mu(t)$ exists and equals to $\inf_n\tfrac{1}{n}
    \bigl( \sum_{\iii \in I^n} \mu([\iii]) \log\psi_\iii^t(\hhh) + C_t
    \bigr)$ with any $C_t \ge \log K_t$,
    \item[(3)] $h_\mu$ exists and equals to $\inf_n\tfrac{1}{n}
    \sum_{\iii \in I^n} H\bigl( \mu([\iii]) \bigr)$,
    \item[(4)] topological pressure is continuous and strictly
  decreasing and there exists a unique $t \ge 0$ such that $P(t)=0$.
  \end{itemize}
  Furthermore, if the cylinder function is smooth, all the previous
  conditions hold for $P^k(t)$, $E_\mu^k(t)$ and $h_\mu^k$ with any
  given $k \in \N$. It holds also (even without the smoothness
  assumption) that
  \begin{itemize}
    \item[(5)] $P(t) = \lim_{k \to \infty} \tfrac{1}{k}
    \overline{P}^k(t) = \lim_{k \to \infty} \tfrac{1}{k}
    \underline{P}^k(t) = \inf_k \tfrac{1}{k} \overline{P}^k(t)
    = \inf_k \tfrac{1}{k} \underline{P}^k(t)$,
    \item[(6)] $E_\mu(t) = \lim_{k \to \infty} \tfrac{1}{k}
    \overline{E}_\mu^k(t) = \lim_{k \to \infty} \tfrac{1}{k}
    \underline{E}_\mu^k(t) = \inf_k \tfrac{1}{k} \overline{E}_\mu^k(t)
    = \inf_k \tfrac{1}{k} \underline{E}_\mu^k(t)$,
    \item[(7)] $h_\mu = \tfrac{1}{k} h_\mu^k$ for every $k \in \N$.
  \end{itemize}
  Finally, none of these limits depends on the choice of $\hhh \in I^\infty$.
\end{proposition}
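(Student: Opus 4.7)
\emph{Parts (1)--(3).} These follow directly from Lemma \ref{thm:gensubadd} combined with Lemma \ref{thm:kaksisubjonoa}. For invariant $\mu$, the latter lemma supplies that the functions $n_1 \mapsto \sum_{\iii \in I^{n_1}} H(\mu([\iii]))$ and $n_1 \mapsto \sum_{\iii \in I^{n_1}} \mu([\iii]) \log \psi_\iii^t(\hhh) + \log K_t$ are subadditive, and the same argument with counting in place of $\mu$ shows subadditivity of $n_1 \mapsto \log \sum_{\iii \in I^{n_1}} \psi_\iii^t(\hhh) + \log K_t$. The subadditive half of Lemma \ref{thm:gensubadd} then forces each corresponding $\tfrac{1}{n}a(n,0)$ to converge to $\inf_n \tfrac{1}{n}a(n,0)$, which yields (1)--(3) for any additive constant $C_t \ge \log K_t$. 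For (4), summing the condition (3) of the cylinder function over $\iii \in I^n$ and taking $\tfrac{1}{n}\log$ yields $\log \as_\delta \le P(t+\delta) - P(t) \le \log \ys_\delta$. Since $\log \ys_\delta < 0$ and $\as_\delta, \ys_\delta \nearrow 1$ as $\delta \searrow 0$, the pressure $P$ is strictly decreasing and continuous. Combined with $P(0) = \log \#I > 0$ and $P(m\delta) \le \log \#I + m \log \ys_\delta \to -\infty$ as $m \to \infty$, the intermediate value theorem delivers a unique zero.

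If the cylinder function is smooth, then by definition $\psi_\iii^{t,k}$ satisfies the BVP on $I^{k*}$ and already fulfils the chain rule and condition (3) block by block, so it is itself a cylinder function on $I^{k*}$, and parts (1)--(4) apply to it verbatim. For (5), iterating the subchain rule gives $\psi_\iii^t(\hhh) \le \psi_\iii^{t,k}(\hhh)$ for every $\iii \in I^{k*}$, whence $P(t) \le \tfrac{1}{k}\underline{P}^k(t) \le \tfrac{1}{k}\overline{P}^k(t)$. Conversely, the BVP applied inside each factor of $\psi_\iii^{t,k}$ replaces the varying centres by the fixed $\hhh$ at a cost of $K_t$ per block, so
\begin{equation*}
  \sum_{\iii \in I^{kn}} \psi_\iii^{t,k}(\hhh) \le K_t^n \Bigl( \sum_{\jjj \in I^k} \psi_\jjj^t(\hhh) \Bigr)^n,
\end{equation*}
and hence $\tfrac{1}{k}\overline{P}^k(t) \le \tfrac{1}{k}\log K_t + \tfrac{1}{k}\log \sum_{\jjj \in I^k} \psi_\jjj^t(\hhh)$. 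By (1) the right-hand side tends to $P(t)$ and its infimum over $k$ equals $P(t)$, which sandwiches all four quantities in (5) at the common value $P(t)$. The argument for (6) is identical once $\mu$-weights are inserted, since invariance of $\mu$ allows the double sum over $I^{kn}$ to be refactored into the required $n$-fold product structure.

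For (7), once $h_\mu$ is known to exist by (3), restricting the defining sequence to the subsequence $m = kn$ gives $\tfrac{1}{kn}\sum_{\iii \in I^{kn}} H(\mu([\iii])) \to h_\mu$, hence $h_\mu^k = k h_\mu$. Independence of the chosen $\hhh$ is immediate from the BVP: switching $\hhh$ to $\hhh'$ alters every sum by a multiplicative factor in $[K_t^{-1}, K_t]$, whose logarithm vanishes after division by $n$. The main obstacle is the decoupling step in (5)--(6): one must exploit the BVP block by block in the product defining $\psi_\iii^{t,k}$ so that the $kn$-fold sum factors as the $n$-th power of the $k$-fold sum, and this requires care because the centre $\sigma^{(j+1)k}(\iii)$ in each factor depends on the tail of $\iii$ and must be swapped for $\hhh$ before the sum can be separated.
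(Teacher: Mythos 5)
Your proposal is correct and follows essentially the same route as the paper: parts (1)--(3) via the (generalised) subadditivity lemmas, part (4) via summing condition (3) of the cylinder function, and parts (5)--(6) via the two-sided sandwich $P(t)\le\tfrac{1}{k}\underline{P}^k(t)\le\tfrac{1}{k}\overline{P}^k(t)\le\tfrac{1}{k}\log K_t+\tfrac{1}{k}\log\sum_{\iii\in I^k}\psi_\iii^t(\hhh)$ obtained by applying the BVP blockwise to factor the $kn$-fold sum. The only cosmetic difference is that you obtain the lower bound $P(t)\le\tfrac{1}{k}\underline{P}^k(t)$ directly from $\psi_\iii^t\le\psi_\iii^{t,k}$ rather than through the paper's intermediate $\log K_t$ corrections; the substance is identical.
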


\begin{proof}
  Take $\hhh \in I^\infty$ and $\mu \in \MM_\sigma(I^\infty)$. From the
  subchain rule we get
  \begin{align}
    \sum_{\iii \in I^{n_1+n_2}} \psi_\iii^t(\hhh) &\le \sum_{\iii \in
    I^{n_1+n_2}} \psi_{\iii|_{n_1}}^t\bigl( \sigma^{n_1}(\iii),\hhh
    \bigr) \psi_{\sigma^{n_1}(\iii)}^t(\hhh) \notag \\
    &\le K_t \sum_{\iii \in I^{n_1}} \psi_\iii^t(\hhh) \sum_{\iii \in
    I^{n_2}} \psi_\iii^t(\hhh)
  \end{align}
  using the BVP for any choice of $n_1,n_2 \in \N$. Thus, using Lemma
  \ref{thm:gensubadd}, we get (1). Statements (2) and (3) follow
  immediately from the invariance of $\mu$ and Lemmas
  \ref{thm:kaksisubjonoa} and \ref{thm:gensubadd}.

  Using the assumption (3) in the definition of the cylinder function, we have
  for fixed $n \in \N$
  \begin{align}
    \log \as_\delta &+ \tfrac{1}{n} \log \sum_{\iii \in I^n}
    \psi_\iii^t(\hhh) \le \tfrac{1}{n} \log \sum_{\iii \in I^n}
    \psi_\iii^{t+\delta}(\hhh) \notag \\
    &\le \log \ys_\delta + \tfrac{1}{n} \log \sum_{\iii \in I^n}
    \psi_\iii^t(\hhh) 
  \end{align}
  with any choice of $\delta > 0$. Letting $n \to \infty$, we get $0 <
  \log \tfrac{1}{\ys_\delta} \le P(t) - P(t+\delta) \le \log
  \tfrac{1}{\as_\delta}$. This gives the continuity of the topological
  pressure since $\as_\delta,\ys_\delta \nearrow 1$ as $\delta \searrow
  0$. It says also that the topological pressure is strictly
  decreasing and $P(t) \to -\infty$, as $t \to \infty$. Since $P(0) =
  \log\# I$, we have proved (4).

  Assuming the cylinder function to be smooth, we notice that $\psi_\iii^{t,k}$
  are cylinder functions on $I^{k*}$ with any choice of $k \in \N$, and,
  therefore, the previous proofs apply. Using the BVP, we get
  \begin{align}
    \tfrac{1}{kn} \log \sum_{\iii \in I^{kn}} \psi_\iii^{t,k}(\hhh)
    &\le \tfrac{1}{kn} \log K_t^n \sum_{\iii \in I^{kn}}
    \prod_{j=0}^{n-1} \psi_{\sigma^{jk}(\iii)|_k}^t(\hhh) \notag \\
    &= \tfrac{1}{k} \log K_t + \tfrac{1}{kn} \log \Biggl( \sum_{\iii
    \in I^k} \psi_\iii^t(\hhh) \Biggr)^n
  \end{align}
  for any choice of $k,n \in \N$. Therefore, due to the subchain rule,
  \begin{align}
    P(t) &\le \tfrac{1}{kn} \log\sum_{\iii \in I^{kn}}
    \psi_\iii^t(\hhh) + \tfrac{1}{kn} \log K_t \notag \\
    &\le \tfrac{1}{kn} \log\sum_{\iii \in I^{kn}}
    \psi_\iii^{t,k}(\hhh) + \tfrac{1}{kn} \log K_t \\
    &\le \tfrac{1}{k} \log\sum_{\iii \in I^k} \psi_\iii^t(\hhh) +
    \tfrac{1}{k} \log K_t + \tfrac{1}{kn} \log K_t \notag
  \end{align}
  using (1). Now letting $n \to \infty$ and then $k \to \infty$, we get
  (5). Similarly, using the invariance of $\mu$ and the BVP, we have
  \begin{align}
    \tfrac{1}{kn} \sum_{\iii \in I^{kn}} \mu([\iii])
    &\log\psi_\iii^{t,k}(\hhh) \le \tfrac{1}{kn} \sum_{\iii \in I^{kn}}
    \mu([\iii]) \log K_t^n \prod_{j=0}^{n-1}
    \psi_{\sigma^{jk}(\iii)|_k}^t(\hhh) \notag \\
    &= \tfrac{1}{k} \log K_t + \tfrac{1}{kn} \sum_{j=0}^{n-1}
    \sum_{\iii \in I^{kn}} \mu([\iii]) \log
    \psi_{\sigma^{jk}(\iii)|_k}^t(\hhh) \\
    &= \tfrac{1}{k} \log K_t + \tfrac{1}{k} \sum_{\iii \in I^k}
    \mu([\iii]) \log\psi_\iii^t(\hhh) \notag
  \end{align}
  for any choice of $k,n \in \N$. Therefore
  \begin{align}
    E_\mu(t) &\le \tfrac{1}{kn} \sum_{\iii \in I^{kn}} \mu([\iii])
    \log\psi_\iii^t(\hhh) + \tfrac{1}{kn} \log K_t \notag \\
    &\le \tfrac{1}{kn} \sum_{\iii \in I^{kn}} \mu([\iii])
    \log\psi_\iii^{t,k}(\hhh) + \tfrac{1}{kn} \log K_t \\
    &\le \tfrac{1}{k} \sum_{\iii \in I^k} \mu([\iii])
    \log\psi_\iii^t(\hhh) + \tfrac{1}{k} \log K_t + \tfrac{1}{kn} \log
    K_t \notag
  \end{align}
  using (2). Now letting $n \to \infty$ and then $k \to \infty$, we get
  (6). Using the BVP, we get rid of the dependence on the choice of $\hhh
  \in I^\infty$ on these limits. Noting that (7) is trivial, we have
  finished the proof.
\end{proof}

Note that if a cylinder function satisfy the chain rule, we have $P(t)
= \tfrac{1}{k} P^k(t)$ and $E_\mu(t) = \tfrac{1}{k} E_\mu^k(t)$ for
every choice of $k \in \N$ and $\mu \in \MM_\sigma(I^\infty)$.
With these tools of thermodynamical formalism we are now ready to look
for a special invariant 
measure on $I^\infty$, the so called equilibrium measure. If we denote
$\alpha(\iii) = \psi_\iii^t(\hhh)/\sum_{\jjj \in I^{|\iii|}}
\psi_\jjj^t(\hhh)$, as $\iii \in I^*$, we get, using Jensen's
inequality for any $n \in \N$ and $\mu \in \MM(I^\infty)$,
\begin{align} \label{eq:jensen}
  0 &= 1\log 1 = \tfrac{1}{n} H\Biggl( \sum_{\iii \in I^n} \alpha(\iii)
  \frac{\mu([\iii])}{\alpha(\iii)} \Biggr) \ge \tfrac{1}{n} \sum_{\iii \in I^n}
  \alpha(\iii) H\biggl( \frac{\mu([\iii])}{\alpha(\iii)} \biggr)
  \notag \\
  &= \tfrac{1}{n} \sum_{\iii \in I^n} \mu([\iii]) \biggl( -\log\mu([\iii]) +
  \log\psi_\iii^t(\hhh) - \log\sum_{\jjj \in I^n} \psi_\jjj^t(\hhh) \biggr)
\end{align}
with equality if and only if $\mu([\iii]) = C\alpha(\iii)$ for some
constant $C>0$. Thus, in the view of Proposition \ref{thm:perusomin},
\begin{equation}
  P(t) \ge h_\mu + E_\mu(t)
\end{equation}
whenever $\mu \in \MM_\sigma(I^\infty)$. We call a measure $\mu \in
\MM_\sigma(I^\infty)$ as \emph{$t$--equilibrium measure} if it
satisfies an equilibrium state
\begin{equation}
  P(t) = h_\mu + E_\mu(t).
\end{equation}
In other words, the equilibrium measure (or state) is a solution for
a variational equation $P(t) = \sup_{\mu \in \MM_\sigma(I^\infty)} \bigl(
h_\mu + E_\mu(t) \bigr)$.

Define now for each $k \in \N$ a Perron--Frobenius operator $\FF_{t,k}$
by setting
\begin{equation}
  \bigl( \FF_{t,k}(f) \bigr)(\hhh) = \sum_{\iii \in I^k}
  \psi_\iii^{t,k}(\hhh) f(\iii,\hhh)
\end{equation}
for every continuous function $f : I^\infty \to \R$. Using this
operator, we are able to find our equilibrium measure. Assuming $\bigl(
\FF_{t,k}^{n-1}(f) \bigr)(\hhh) = \sum_{\iii \in I^{k(n-1)}}
\psi_\iii^{t,k}(\hhh) f(\iii,\hhh)$, we get inductively, using the chain
rule,
\begin{align}
  \bigl( \FF_{t,k}^n(f) \bigr)(\hhh) &= \bigl( \FF_{t,k}\bigl(
  \FF_{t,k}^{n-1}(f) \bigr) \bigr)(\hhh) \notag \\
  &= \sum_{\iii \in I^k} \psi_\iii^{t,k}(\hhh) \bigl(
  \FF_{t,k}^{n-1}(f) \bigr)(\iii,\hhh) \notag \\
  &= \sum_{\iii \in I^k} \psi_\iii^{t,k}(\hhh) \sum_{\jjj \in
  I^{k(n-1)}} \psi_\jjj^{t,k}(\iii,\hhh) f(\jjj,\iii,\hhh) \\
  &= \sum_{\iii \in I^{kn}} \psi_\iii^{t,k}(\hhh) f(\iii,\hhh). \notag
\end{align}
Let us then denote with $\FF_{t,k}^*$ the dual operator of
$\FF_{t,k}$. Due to the Riesz representation theorem it operates on
$\MM(I^\infty)$. Relying now on the definitions of these operators, we
may find a special measure using a suitable fixed point theorem.
If the chain rule is satisfied, this is a known result. For example,
see Theorem 1.7 of Bowen \cite{bo}, Theorem 3 of Sullivan \cite{su}
and Theorem 3.5 of Mauldin and Urba\'nski \cite{mu}.

\begin{theorem} \label{thm:aakolme}
  For each $t \ge 0$ and $k \in \N$ there exists a measure $\nu_k \in
  \MM(I^\infty)$ such that
  \begin{equation}
    \nu_k([\iii;A]) = \Pi_k^{-|\iii|/k} \int_A
    \psi_\iii^{t,k}(\hhh) d\nu_k(\hhh),
  \end{equation}
  where $\Pi_k > 0$, $\iii \in I^{k*}$ and $A \subset I^\infty$ is a
  Borel set. Moreover, $\lim_{k \to \infty} \Pi_k^{1/k} =  e^{P(t)}$ and
  if the cylinder function is smooth, $\Pi_k = e^{P^k(t)}$ for every $k
  \in \N$.
\end{theorem}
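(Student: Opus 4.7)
The plan is to construct $\nu_k$ as a fixed point of the normalized dual Perron--Frobenius operator and then to extract the cylinder formula by iterating the resulting eigen-equation.

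First I would introduce $T_k : \MM(I^\infty) \to \MM(I^\infty)$ by
\[
  T_k(\nu) = \FF_{t,k}^*(\nu)/\FF_{t,k}^*(\nu)(I^\infty).
\]
The denominator equals $\int \FF_{t,k}(1) \, d\nu$, and since $\FF_{t,k}(1)(\hhh) = \sum_{\iii \in I^k} \psi_\iii^{t,k}(\hhh) \ge \as_t^k \, \# I^k$ by condition~(3) of the cylinder function definition combined with the subchain rule, it is bounded away from zero uniformly in $\hhh$. Hence $T_k$ is well defined, and continuity of the $\psi_\iii^{t,k}$ yields weak-$*$ continuity of $T_k$. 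Since $\MM(I^\infty)$ is a nonempty, convex, weak-$*$ compact subset of $C(I^\infty)^*$, the Schauder--Tychonoff fixed point theorem produces $\nu_k \in \MM(I^\infty)$ with $T_k(\nu_k) = \nu_k$, i.e., $\FF_{t,k}^*(\nu_k) = \Pi_k \nu_k$ where $\Pi_k := \FF_{t,k}^*(\nu_k)(I^\infty) > 0$.

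Next I would unravel this identity. Testing $\FF_{t,k}^*(\nu_k) = \Pi_k \nu_k$ against the indicator of a cylinder $[\jjj; A]$ with $\jjj \in I^k$ (approximating by continuous functions in the usual way) yields
\[
  \nu_k([\jjj; A]) = \Pi_k^{-1} \int_A \psi_\jjj^{t,k}(\hhh) \, d\nu_k(\hhh),
\]
since on $(\iii,\hhh) \in I^k \times I^\infty$ only the $\iii = \jjj$ summand of $\FF_{t,k}(\chi_{[\jjj;A]})$ survives. To reach $\iii \in I^{kn}$ for general $n$, I would induct: writing $\iii = (\iii_1, \iii_2)$ with $\iii_1 \in I^k$ and $\iii_2 \in I^{k(n-1)}$, one has $[\iii; A] = [\iii_1; [\iii_2; A]]$. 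The inductive hypothesis says that $B \mapsto \nu_k([\iii_2; B])$ is absolutely continuous with respect to $\nu_k$ with density $\Pi_k^{-(n-1)} \psi_{\iii_2}^{t,k}$; feeding this into the base-case equation applied to $[\iii_1; [\iii_2; A]]$, and then invoking the chain rule $\psi_\iii^{t,k}(\hhh) = \psi_{\iii_1}^{t,k}(\iii_2, \hhh)\, \psi_{\iii_2}^{t,k}(\hhh)$ on $I^{k*}$ (which holds without any smoothness assumption), closes the induction and produces the claimed formula.

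Finally, I would determine $\Pi_k$. Summing the cylinder formula over $\iii \in I^{kn}$ and using $\nu_k(I^\infty) = 1$ gives
\[
  \Pi_k^n = \int \sum_{\iii \in I^{kn}} \psi_\iii^{t,k}(\hhh) \, d\nu_k(\hhh).
\]
Applying the BVP of $\psi^t$ to each of the $n$ factors composing $\psi_\iii^{t,k}$ sandwiches $\psi_\iii^{t,k}(\hhh)$ between $K_t^{-n}\psi_\iii^{t,k}(\hhh')$ and $K_t^{n}\psi_\iii^{t,k}(\hhh')$ for any reference $\hhh'$; after integrating, taking $\tfrac{1}{n}\log$, and letting $n \to \infty$, one obtains $\overline{P}^k(t) - \log K_t \le \log \Pi_k \le \underline{P}^k(t) + \log K_t$. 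Dividing by $k$ and invoking Proposition~\ref{thm:perusomin}(5) gives $\Pi_k^{1/k} \to e^{P(t)}$. Under the smoothness assumption, $\psi^{t,k}$ satisfies its own BVP with an $n$-independent constant $K_{t,k}$, so the sandwich tightens (as $\tfrac{1}{n}\log K_{t,k} \to 0$) to $\log \Pi_k = P^k(t)$ on the nose. The main obstacle is the inductive step, where one must carefully identify $\nu_k$ restricted to $[\iii_2]$, pulled back along $\hhh \mapsto (\iii_2, \hhh)$, with the measure of density $\Pi_k^{-(n-1)}\psi_{\iii_2}^{t,k}$ against $\nu_k$, and verify that the chain-rule factorization collapses the resulting nested integrals into a single integral against $\psi_\iii^{t,k}$; the fixed-point step and the limit identification are otherwise largely routine.
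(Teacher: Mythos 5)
Your proposal is correct and follows essentially the same route as the paper: the normalized dual operator $\Lambda(\mu)=\FF_{t,k}^*(\mu)/\bigl(\FF_{t,k}^*(\mu)\bigr)(I^\infty)$, Schauder--Tychonoff, the eigen-equation $\FF_{t,k}^*(\nu_k)=\Pi_k\nu_k$ iterated via the chain rule for $\psi^{t,k}$ on $I^{k*}$ (your induction is exactly the paper's precomputed formula for $\FF_{t,k}^n$), and the same BVP sandwich $K_t^{\pm n}$ (resp.\ $K_{t,k}^{\pm 1}$ in the smooth case) combined with Proposition \ref{thm:perusomin}(5) to identify $\Pi_k$.
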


\begin{proof}
  For fixed $t \ge 0$ and $k \in \N$ define $\Lambda : \MM(I^\infty)
  \to \MM(I^\infty)$ by setting
  \begin{equation}
    \Lambda(\mu) = \frac{1}{\bigl( \FF_{t,k}^*(\mu) \bigr)(I^\infty)}
    \FF_{t,k}^*(\mu).
  \end{equation}
  Take now an arbitrary converging sequence, say, $(\mu_n)$ for which
  $\mu_n \to \mu$ in the weak topology with some $\mu \in
  \MM(I^\infty)$. Then for each continuous $f$ we have
  \begin{align}
    \bigl( \FF_{t,k}^*&(\mu_n) \bigr)(f) = \mu_n\bigl( \FF_{t,k}(f)
    \bigr) \notag \\
    &\to \mu\bigl( \FF_{t,k}(f) \bigr) = \bigl( \FF_{t,k}^*(\mu)
    \bigr)(f)
  \end{align}
  as $n \to \infty$. Thus $\Lambda$ is continuous. Now the
  Schauder--Tychonoff fixed point theorem applies and we find $\nu_k
  \in \MM(I^\infty)$ such that $\Lambda(\nu_k) = \nu_k$. Denoting $\Pi_k
  = \bigl( \FF_{t,k}^*(\nu_k) \bigr)(I^\infty)$, we have
  $\FF_{t,k}^*(\nu_k) = \Pi_k\nu_k$. Take now some Borel set
  $A \subset I^\infty$ and $\iii \in I^{k*}$. Then
  \begin{align} \label{eq:semikonforminen}
    \Pi_k^{|\iii|/k} \nu_k([\iii;A]) &= \bigl(
    (\FF_{t,k}^*)^{|\iii|/k}(\nu_k) \bigr)([\iii;A]) = \nu_k\bigl(
    \FF_{t,k}^{|\iii|/k}(\khii_{[\iii;A]}) \bigr) \notag \\
    &= \int_{I^\infty} \sum_{\jjj \in I^{|\iii|}}
    \psi_\jjj^{t,k}(\hhh) \khii_{[\iii;A]}(\jjj,\hhh) d\nu_k(\hhh) \notag \\
    &= \int_{I^\infty} \psi_\iii^{t,k}(\hhh) \khii_A(\hhh) d\nu_k(\hhh) \\
    &= \int_A \psi_\iii^{t,k}(\hhh) d\nu_k(\hhh), \notag
  \end{align}
  which proves the first claim. It also follows applying the BVP that for
  each $n \in \N$
  \begin{align} \label{eq:semikonfarvio}
    \Pi_k^n = \Pi_k^n &\sum_{\iii \in I^{kn}} \nu_k([\iii]) =
    \int_{I^\infty} \sum_{\iii \in I^{kn}} \psi_\iii^{t,k}(\hhh)
    d\nu_k(\hhh) \notag \\
    &\le K_t^n \sum_{\iii \in I^{kn}} \psi_\iii^{t,k}(\hhh)
  \end{align}
  and, similarly, the other way around.
  Taking now logarithms, dividing by $kn$ and taking the limit, we have
  for each $k \in \N$
  \begin{equation}
    \tfrac{1}{k}\underline{P}^k(t) - \tfrac{1}{k} \log K_t \le
    \tfrac{1}{k} \log\Pi_k \le \tfrac{1}{k}\overline{P}^k(t) +
    \tfrac{1}{k} \log K_t.
  \end{equation}
  If the cylinder function is
  smooth, then for each $k$ there exists a constant $K_{t,k} \ge 1$ for
  which $\psi_\iii^{t,k}(\hhh) \le K_{t,k} \psi_\iii^{t,k}(\jjj)$
  whenever $\hhh,\jjj \in I^\infty$ and $\iii \in I^{k*}$. Using
  this in (\ref{eq:semikonfarvio}), we have finished the proof.
\end{proof}

Note that if a cylinder function satisfies the chain rule, then $\nu_k = \nu$
for every $k \in \N$, where
\begin{equation} \label{eq:conformaldef}
  \nu([\iii;A]) = e^{-|\iii|P(t)} \int_A \psi_\iii^t(\hhh) d\nu(\hhh)
\end{equation}
as $\iii \in I^*$ and $A \subset I^\infty$ is a Borel set. The
measure $\nu$ is called a \emph{$t$--conformal measure}.

\begin{theorem} \label{thm:equilibrium}
  There exists an equilibrium measure.
\end{theorem}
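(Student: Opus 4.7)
The plan is to combine the semiconformal measures $\nu_k$ from Theorem \ref{thm:aakolme} with a Krylov--Bogolyubov style averaging, then pass to a weak-$*$ limit and invoke upper semicontinuity of $\mu \mapsto h_\mu + E_\mu(t)$ on the compact set $\MM_\sigma(I^\infty)$.

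For each $k \in \N$ I would form
\[
  \mu_k = \lim_{N \to \infty} \tfrac{1}{N} \sum_{j=0}^{N-1} \nu_k \pallo \sigma^{-j}
\]
as a weak-$*$ limit point; existence is guaranteed by compactness of $\MM(I^\infty)$, and $\sigma$-invariance follows from a standard telescoping argument. I would then estimate $h_{\mu_k} + E_{\mu_k}(t)$ from below as follows. By Theorem \ref{thm:aakolme}, for $\iii \in I^{kn}$ we have $\nu_k([\iii]) = \Pi_k^{-n} \int \psi_\iii^{t,k}(\hhh)\, d\nu_k(\hhh)$. Substituting this into the Jensen inequality in (\ref{eq:jensen}), applied at level $kn$ with $\psi^{t,k}$ in place of $\psi^t$, shows that $\nu_k$ nearly saturates the inequality; the equilibrium defect can be controlled using the BVP of $\psi^t$ and the subchain rule. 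Transferring the estimate to the averaged $\mu_k$ via concavity of $H$ and Proposition \ref{thm:perusomin} to pass between levels should yield
\[
  h_{\mu_k} + E_{\mu_k}(t) \ge \tfrac{1}{k} \log \Pi_k - \tfrac{C_k}{k}
\]
with $C_k/k \to 0$ as $k \to \infty$.

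Next, I would take a weak-$*$ subsequential limit $\mu$ of $(\mu_k)_k$; the limit still belongs to $\MM_\sigma(I^\infty)$ because this set is closed in the weak topology. By parts (2) and (3) of Proposition \ref{thm:perusomin}, both $h_\mu$ and $E_\mu(t)$ are infima over $n$ of weakly continuous functionals of $\mu$ (each such functional involves only finitely many cylinder masses), hence upper semicontinuous, and so is their sum. Combined with $\tfrac{1}{k} \log \Pi_k \to P(t)$ from Theorem \ref{thm:aakolme}, this gives $h_\mu + E_\mu(t) \ge P(t)$. The reverse inequality holds for every $\mu \in \MM_\sigma(I^\infty)$ by the Jensen argument in (\ref{eq:jensen}), and hence $\mu$ is a $t$--equilibrium measure.

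The main obstacle is the middle step. Without smoothness of the cylinder function, $\psi_\iii^{t,k}$ need not satisfy a uniform BVP, so the natural identification $\nu_k([\iii]) \asymp \Pi_k^{-n} \psi_\iii^{t,k}(\hhh)$ is not available, and $\nu_k$ is itself only approximately $\sigma^k$-invariant, meaning the Cesàro averaging perturbs the cylinder weights. Ensuring that the combined perturbation to $h_{\mu_k} + E_{\mu_k}(t)$ remains $o(1)$ after dividing by $k$ is the principal technical difficulty, and is precisely the kind of bookkeeping that the generalized subadditivity of Lemma \ref{thm:gensubadd} is designed to enable.
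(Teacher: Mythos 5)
Your overall architecture---semiconformal measures from Theorem \ref{thm:aakolme}, a Ces\`aro average, a weak limit in the closed set $\MM_\sigma(I^\infty)$, and upper semicontinuity of $\mu \mapsto h_\mu + E_\mu(t)$ via the infimum representations of Proposition \ref{thm:perusomin}---is sound and close in spirit to the paper's. But the middle step, the inequality $h_{\mu_k}+E_{\mu_k}(t) \ge \tfrac{1}{k}\log\Pi_k - C_k/k$, is where the whole proof has to happen, and the route you sketch for it does not close. Since $\mu_k$ is invariant, $h_{\mu_k}+E_{\mu_k}(t)$ is the \emph{infimum} over levels $m$ of the level-$m$ entropy--energy sums, so lower-bounding it requires lower-bounding the level-$N$ sums of $\nu_k$ for $N$ arbitrarily large (transferring to $\mu_k$ by Lemma \ref{thm:gensubadd} is then fine). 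To bound the level-$kn$ sum of $\nu_k$ from below via the Jensen computation you must use the semiconformal identity $\nu_k([\iii]) = \Pi_k^{-n}\int\psi_\iii^{t,k}\,d\nu_k$, and this forces a comparison of $\log\psi^t_\iii(\hhh)$ with $\log\psi^{t,k}_\iii(\hhh)$ for $\iii \in I^{kn}$ with $n$ large. The subchain rule gives only the one-sided bound $\psi^t_\iii \le \psi^{t,k}_\iii$, with no reverse inequality: for fixed $k$ the per-symbol defect $\tfrac{1}{kn}\log\bigl(\psi^{t,k}_\iii(\hhh)/\psi^t_\iii(\hhh)\bigr)$ need not tend to zero as $n\to\infty$ (in the affine case it reflects the gap between $\tfrac{1}{k}\overline{P}^k(t)$ and $P(t)$, which is generally positive for fixed $k$). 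This is not bookkeeping that Lemma \ref{thm:gensubadd} handles---that lemma relates different levels of a single generalised subadditive function and says nothing about the discrepancy between $\psi^t$ and $\psi^{t,k}$. Without smoothness of the cylinder function you are stuck, and smoothness is exactly what the main application (the singular value function) lacks.

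The paper's fix is a diagonal choice of indices that you should adopt: take the semiconformal measure $\nu_n$ at level $n$ and average over exactly the first $n$ shifts, $\mu_n = \tfrac{1}{n}\sum_{j=0}^{n-1}\nu_n\circ\sigma^{-j}$. Then the semiconformal identity is invoked only at the single level $|\iii|=n$, where $\psi^{t,n}_\iii = \psi^t_\iii$, so only the BVP of $\psi^t$ itself enters, at cost $\tfrac{1}{n}\log K_t$. Lemma \ref{thm:gensubadd} together with Lemma \ref{thm:kaksisubjonoa} then bounds the level-$n$ sum of $\nu_n$ above by the level-$k$ sum of $\mu_n$ plus $\tfrac{3k}{n}C$; letting $n\to\infty$ first (for a weak accumulation point $\mu$, using that cylinders are clopen) and then $k\to\infty$ yields $P(t) \le h_\mu + E_\mu(t)$. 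Your upper-semicontinuity observation is a perfectly acceptable substitute for the paper's final passage to the limit in $k$, but it cannot substitute for the diagonal indexing in the first limit.
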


\begin{proof}
  According to Theorem \ref{thm:aakolme}, we have for each $n \in \N$
  a measure $\nu_n \in \MM(I^\infty)$ for which
  \begin{equation} \label{eq:melkeinconf}
    \nu_n([\iii]) = \Pi_n^{-1} \int_{I^\infty} \psi_\iii^t(\hhh)
    d\nu_n(\hhh),
  \end{equation}
  where $\iii \in I^n$ and $\lim_{n \to \infty} \tfrac{1}{n} \log
  \Pi_n = P(t)$. Hence, using the BVP, we get
  \begin{align} \label{eq:kuustahti}
    \tfrac{1}{n} \sum_{\iii \in I^n} \nu_n([\iii]) \bigl(
    &-\log\nu_n([\iii]) + \log\psi_\iii^t(\hhh) \bigr) \notag \\
    &= \tfrac{1}{n} \sum_{\iii \in I^n} \nu_n([\iii]) \biggl( -\log
    \Pi_n^{-1} \int_{I^\infty} \psi_\iii^t(\hhh) d\nu_n(\hhh) + \log
    \psi_\iii^t(\hhh) \biggr) \notag \\
    &\ge \tfrac{1}{n} \sum_{\iii \in I^n} \nu_n([\iii]) (\log \Pi_n -
    \log K_t) \\
    &= \tfrac{1}{n} \log \Pi_n - \tfrac{1}{n} \log K_t \notag
  \end{align}
  for every $n \in \N$. Define now for each $n \in \N$ a probability
  measure
  \begin{equation}
    \mu_n = \tfrac{1}{n} \sum_{j=0}^{n-1} \nu_n \pallo \sigma^{-j}
  \end{equation}
  and take $\mu$ to be some accumulation point of the
  set $\{ \mu_n \}_{n \in \N}$ in the weak topology. Now for any $\iii \in
  I^*$ we have
  \begin{align}
    \bigl|\mu_n([\iii]) - \mu_n\bigl( \sigma^{-1}([\iii]) \bigr)\bigr| &=
    \tfrac{1}{n} \bigl|\nu_n([\iii]) - \nu_n\pallo\sigma^{-n}([\iii])\bigr|
    \notag \\
    &\le \tfrac{1}{n} \to 0,
  \end{align}
  as $n \to \infty$. Thus $\mu \in \MM_\sigma(I^\infty)$. According to
  Lemma \ref{thm:gensubadd} and Proposition \ref{thm:kaksisubjonoa}(1),
  we have, using concavity of $H$,
  \begin{align} \label{eq:seiskatahti}
    \tfrac{1}{n} \sum_{\iii \in I^n} H\bigl( \nu_n([\iii]) \bigr) &\le
    \tfrac{1}{kn} \sum_{j=0}^{n-1} \sum_{\iii \in I^k} H\bigl( \nu_n
    \pallo \sigma^{-j}([\iii]) \bigr) + \tfrac{3k}{n}C_1 \notag \\
    &\le \tfrac{1}{k} \sum_{\iii \in I^k} H\bigl( \mu_n([\iii]) \bigr)
    + \tfrac{3k}{n}C_1
  \end{align}
  for some constant $C_1$ whenever $0<k<n$. Using then Lemma
  \ref{thm:gensubadd} and Proposition \ref{thm:kaksisubjonoa}(2), we
  get
  \begin{align} \label{eq:kasitahti}
    \tfrac{1}{n} \sum_{\iii \in I^n} \nu_n([\iii]) &\log
    \psi_\iii^t(\hhh) + \tfrac{1}{n} \log K_t \notag \\
    &\le \tfrac{1}{kn} \sum_{j=0}^{n-1} \Biggl( \sum_{\iii \in I^k}
    \nu_n \pallo \sigma^{-j}([\iii]) \log \psi_\iii^t(\hhh) + \log K_t
    \Biggr) + \tfrac{3k}{n}C_2 \\
    &= \tfrac{1}{k} \sum_{\iii \in I^k} \mu_n([\iii]) \log
    \psi_\iii^t(\hhh) + \tfrac{1}{k}\log K_t + \tfrac{3k}{n}C_2 \notag
  \end{align}
  for some constant $C_2$ whenever $0<k<n$. Now putting
  (\ref{eq:kuustahti}), (\ref{eq:seiskatahti}) and
  (\ref{eq:kasitahti}) together, we have
  \begin{align}
    \tfrac{1}{n} \log \Pi_n &\le \tfrac{1}{n} \sum_{\iii \in I^n}
    H\bigl( \nu_n([\iii]) \bigr) + \tfrac{1}{n} \sum_{\iii \in I^n}
    \nu_n([\iii]) \log \psi_\iii^t(\hhh) + \tfrac{1}{n} \log K_t
    \notag \\
    &\le \tfrac{1}{k} \sum_{\iii \in I^k} H\bigl( \mu_n([\iii]) \bigr)
    + \tfrac{1}{k} \sum_{\iii \in I^k} \mu_n([\iii]) \log
    \psi_\iii^t(\hhh) \\
    &\quad + \tfrac{3k}{n}C_1 + \tfrac{3k}{n}C_2 + \tfrac{1}{k} \log K_t \notag
  \end{align}
  whenever $0<k<n$. Letting now $n \to \infty$, we get
  \begin{equation}
    P(t) \le \tfrac{1}{k} \sum_{\iii \in I^k} H\bigl( \mu([\iii])
    \bigr) + \tfrac{1}{k} \sum_{\iii \in I^k} \mu([\iii]) \log
    \psi_\iii^t(\hhh) + \tfrac{1}{k} \log K_t
  \end{equation}
  since cylinder sets have empty boundary. The proof is finished by
  letting $k \to \infty$.
\end{proof}

\begin{remark}
  In order to prove the existence of the equilibrium measure, the use
  of the Perron--Frobenius operator is not necessarily needed. Indeed,
  for fixed $\hhh \in I^\infty$ we could define for each $n \in \N$ a
  probability measure
  \begin{equation}
    \nu_n = \frac{\sum_{\iii \in I^n} \psi_\iii^t(\hhh)
    \delta_{\iii,\hhh}}{\sum_{\iii \in I^n} \psi_\iii^t(\hhh)},
  \end{equation}
  where $\delta_\hhh$ is a probability measure with support $\{ \hhh
  \}$. Now with this measure we have equality in (\ref{eq:jensen}),
  which is going to be our replacement for (\ref{eq:kuustahti}) in the
  proof of Theorem \ref{thm:equilibrium}.
\end{remark}

Notice that in the simplest case, where the cylinder function is
constant and satisfies the chain rule, the conformal measure equals to
the equilibrium measure. This can be easily derived from the following
theorem. Compare it also with Theorem 3.8 of Mauldin and Urba\'nski
\cite{mu}.

\begin{theorem} \label{thm:equivalent}
  Suppose the cylinder function satisfies the chain rule. Then
  \begin{equation}
    K_t^{-1} \nu(A) \le \mu(A) \le K_t \nu(A)
  \end{equation}
  for every Borel set $A \subset I^\infty$, where $\nu$ is a $t$--conformal
  measure and $\mu$ is the $t$--equilibrium measure found in Theorem
  \ref{thm:equilibrium}.
\end{theorem}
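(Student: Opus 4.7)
The plan is to verify the claim first for cylinder sets and then extend to general Borel sets by regularity. Under the chain rule the measures $\nu_k$ produced by Theorem~\ref{thm:aakolme} all coincide with the single $t$--conformal measure $\nu$ from \eqref{eq:conformaldef}, and $\Pi_k=e^{kP(t)}$. Consequently, the equilibrium measure $\mu$ built in the proof of Theorem~\ref{thm:equilibrium} is a weak accumulation point of $\mu_n=\tfrac{1}{n}\sum_{j=0}^{n-1}\nu\pallo\sigma^{-j}$, and it suffices to show that
\begin{equation*}
  K_t^{-1}\nu([\iii])\le\nu\pallo\sigma^{-j}([\iii])\le K_t\nu([\iii])
\end{equation*}
for every $\iii\in I^*$ and every $j\in\N$.

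Fixing $\iii\in I^m$, I write $\nu\pallo\sigma^{-j}([\iii])=\sum_{\kkk\in I^j}\nu([\kkk,\iii])$, apply \eqref{eq:conformaldef}, and use the chain rule $\psi_{\kkk,\iii}^t(\hhh)=\psi_\kkk^t(\iii,\hhh)\,\psi_\iii^t(\hhh)$ to arrive at
\begin{equation*}
  \nu\pallo\sigma^{-j}([\iii])=e^{-(j+m)P(t)}\int_{I^\infty}\Bigl(\sum_{\kkk\in I^j}\psi_\kkk^t(\iii,\hhh)\Bigr)\psi_\iii^t(\hhh)\,d\nu(\hhh).
\end{equation*}
The crux is then to sandwich the inner sum between $K_t^{\pm 1}e^{jP(t)}$. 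For each $\kkk$ the BVP gives $K_t^{-1}\psi_\kkk^t(\hhh')\le\psi_\kkk^t(\iii,\hhh)\le K_t\psi_\kkk^t(\hhh')$ for every $\hhh'\in I^\infty$. Integrating this against $d\nu(\hhh')$, summing over $\kkk$, and invoking the conformal identity $\int\psi_\kkk^t\,d\nu=e^{jP(t)}\nu([\kkk])$ together with $\sum_\kkk\nu([\kkk])=1$, yields exactly the desired two-sided bound on $\sum_{\kkk}\psi_\kkk^t(\iii,\hhh)$.

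Substituting back and using $\int\psi_\iii^t\,d\nu=e^{mP(t)}\nu([\iii])$ delivers the cylinder estimate. Averaging in $j$ preserves this sandwich for $\mu_n([\iii])$, and since cylinder sets are clopen in $I^\infty$, weak convergence along the subsequence defining $\mu$ transfers the bound to $\mu([\iii])$. The extension to an arbitrary Borel set $A$ is then routine: cylinders form a countable algebra generating the Borel $\sigma$--algebra and both $\mu$ and $\nu$ are Borel regular, so approximation by finite disjoint unions of cylinders carries the inequality across.

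The one delicate point is the constant. A naive two-fold use of the BVP --- once to split $\psi_{\kkk,\iii}^t$ and a second time to reduce $\sum_\kkk\psi_\kkk^t(\iii,\hhh)$ to a pressure-like quantity by evaluating at some fixed reference point $\hhh_0$ --- would only give the weaker bound $K_t^2\nu(A)$. Averaging the BVP against $\nu$ itself, rather than evaluating at a fixed $\hhh_0$, is what makes the conformal identity $\int\sum_\kkk\psi_\kkk^t\,d\nu=e^{jP(t)}$ collapse the sum cleanly so that only a single factor of $K_t$ survives.
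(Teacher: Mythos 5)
Your proposal is correct and follows essentially the same route as the paper: the key estimate $K_t^{-1}e^{jP(t)}\le\sum_{\kkk\in I^j}\psi_\kkk^t(\iii,\hhh)\le K_te^{jP(t)}$, obtained by integrating the BVP against $\nu$ and using the conformal identity, is exactly the paper's inequality (\ref{eq:summaarvio}), and the chain-rule factorisation of $\nu\pallo\sigma^{-j}([\iii])$ followed by averaging in $j$ and passing to the weak limit on clopen cylinders is the paper's computation of $\mu([\iii])$. The only cosmetic difference is the final extension step, where the paper passes through closed sets via decreasing unions of cylinders and then uses inner regularity, while you invoke the algebra generated by cylinders; both are standard and valid.
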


\begin{proof}
  Using the BVP, we derive from (\ref{eq:conformaldef})
  \begin{align}
    1 &= \sum_{\iii \in I^n} \nu([\iii]) = e^{-nP(t)} \sum_{\iii \in
    I^n} \int_{I^\infty} \psi_\iii^t(\hhh) d\nu(\hhh) \notag \\
    &\le K_t e^{-nP(t)} \sum_{\iii \in I^n} \psi_\iii^t(\hhh)
  \end{align}
  for all $n \in \N$ and, similarly, the other way around. Thus we have
  \begin{equation} \label{eq:summaarvio}
    K_t^{-1} e^{nP(t)} \le \sum_{\iii \in I^n} \psi_\iii^t(\hhh) \le
    K_t e^{nP(t)}
  \end{equation}
  for all $n \in \N$. Note that in view of the chain rule we have
  \begin{align}
    \mu([\iii]) &= \lim_{n \to \infty} \tfrac{1}{n} \sum_{j=0}^{n-1}
    \nu \pallo \sigma^{-j}([\iii]) = \lim_{n \to \infty} \tfrac{1}{n}
    \sum_{j=0}^{n-1} \sum_{\jjj \in I^j} \nu([\jjj,\iii]) \notag \\
    &= \lim_{n \to \infty} \tfrac{1}{n} \sum_{j=0}^{n-1} \sum_{\jjj
    \in I^j} e^{-|\jjj,\iii|P(t)} \int_{I^\infty}
    \psi_{\jjj,\iii}^t(\hhh)d\nu(\hhh) \\
    &= \lim_{n \to \infty} \tfrac{1}{n} \sum_{j=0}^{n-1}
    e^{-(j+|\iii|)P(t)} \int_{I^\infty} \psi_\iii^t(\hhh) \sum_{\jjj
    \in I^j} \psi_\jjj^t(\iii,\hhh) d\nu(\hhh) \notag
  \end{align}
  whenever $\iii \in I^*$ since cylinder sets have empty boundary. Now,
  using (\ref{eq:summaarvio}), we get
  \begin{equation}
    K_t^{-1} \nu([\iii]) \le \mu([\iii]) \le K_t \nu([\iii])
  \end{equation}
  for every $\iii \in I^*$. Pick a closed set $C \subset I^\infty$ and
  define $C_n = \{  \iii \in I^n : [\iii] \cap C \ne \emptyset \}$
  whenever $n \in \N$. Now sets $\bigcup_{\iii \in C_n} [\iii] \supset
  C$ are decreasing as $n=1,2,\ldots$, and, therefore,
  $\bigcap_{n=1}^\infty \bigcup_{\iii \in C_n} [\iii] = C$. Thus,
  \begin{align}
    K_t^{-1} \nu(C) &= K_t^{-1} \lim_{n \to \infty} \sum_{\iii \in
    C_n} \nu([\iii]) \le \lim_{n \to \infty} \sum_{\iii \in C_n}
    \mu([\iii]) \notag \\
    &= \mu(C) \le K_t \nu(C).
  \end{align}
  Let $A \subset I^\infty$ be a Borel set. Then, by the Borel regularity of
  these measures, we may find closed sets $C_1,C_2 \subset A$ such that
  $\nu(C_1\pois A) < \eps$ and $\mu(C_2\pois A) < \eps$ for any given
  $\eps > 0$. Therefore, $\nu(A) \le \nu(C_1) + \eps \le K_t\mu(A) +
  \eps$ and $\mu(A) \le \mu(C_2) + \eps \le K_t\mu(A) + \eps$. Letting
  now $\eps \searrow 0$, we have finished the proof.
\end{proof}

\section{Equilibrium dimension and iterated function system}

In the previous chapter, with the help of the simple structured symbol
space using the cylinder function, we found measures with desired
properties. In the following we will project this situation into
$\R^d$. The natural question now is: What can we say about the
Hausdorff dimension of the projected symbol space, the so called limit
set? To answer this question, we have to make several extra
assumptions, namely, we define the concept of the iterated function system
and we introduce a couple of separation conditions. To illustrate our
theory, we give concrete examples at the end of this chapter.

For fixed $t \ge 0$ we denote with $\mu_t$ a corresponding equilibrium
measure. We define for each $n \in \N$
\begin{equation} \label{eq:falconer_measure1}
  \GG_n^t(A) = \inf\Biggl\{ \sum_{j=1}^\infty \int_{I^\infty}
  \psi_{\iii_j}^t(\hhh) d\mu_t(\hhh) : A \subset \bigcup_{j=1}^\infty
  \,[\iii_j],\; |\iii_j| \ge n \Biggr\}
\end{equation}
whenever $A \subset I^\infty$. Assumptions in Carath\'eodory's
construction (for example, see Chapter 4 of \cite{ma2}) are now
satisfied and we have a Borel regular measure $\GG^t$ on $I^\infty$
with
\begin{equation} \label{eq:falconer_measure2}
  \GG^t(A) = \lim_{n \to \infty} \GG_n^t(A).
\end{equation}

\begin{lemma}
  If $\GG^{t_0}(A) < \infty$, then $\GG^t(A) = 0$ for all $t > t_0$.
\end{lemma}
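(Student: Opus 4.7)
The plan is to exploit condition (3) of the cylinder function: for $\delta=t-t_0>0$ we have
\[
  \psi_\iii^{t}(\hhh)=\psi_\iii^{t_0+\delta}(\hhh)\le \psi_\iii^{t_0}(\hhh)\,\ys_\delta^{|\iii|}
\]
with $\ys_\delta\in(0,1)$. So for any admissible cover $\{[\iii_j]\}$ with $|\iii_j|\ge n$, every integrand in the defining sum of $\GG_n^{t}(A)$ picks up a factor of at most $\ys_\delta^{|\iii_j|}\le \ys_\delta^{n}$ compared to the corresponding $t_0$-integrand.

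The only subtlety is that the $t$- and $t_0$-versions of $\GG_n$ integrate against \emph{different} measures $\mu_t$ and $\mu_{t_0}$. I would resolve this using the BVP: for any $\iii\in I^*$ and any $\hhh,\jjj\in I^\infty$,
\[
  K_{t_0}^{-1}\psi_\iii^{t_0}(\jjj)\le \psi_\iii^{t_0}(\hhh)\le K_{t_0}\,\psi_\iii^{t_0}(\jjj),
\]
so that integrating against any probability measure yields
\[
  \int_{I^\infty}\psi_\iii^{t_0}(\hhh)\,d\mu_t(\hhh)\le K_{t_0}^{2}\int_{I^\infty}\psi_\iii^{t_0}(\hhh)\,d\mu_{t_0}(\hhh).
\]
Combining this with the estimate above gives, for every admissible cover,
\[
  \sum_{j=1}^\infty \int\psi_{\iii_j}^{t}\,d\mu_t
  \le K_{t_0}^{2}\,\ys_\delta^{\,n}\sum_{j=1}^\infty \int\psi_{\iii_j}^{t_0}\,d\mu_{t_0}.
\]
Taking the infimum over covers $\{[\iii_j]\}$ with $|\iii_j|\ge n$ that contain $A$, we obtain
\[
  \GG_n^{t}(A)\le K_{t_0}^{2}\,\ys_\delta^{\,n}\,\GG_n^{t_0}(A)\le K_{t_0}^{2}\,\ys_\delta^{\,n}\,\GG^{t_0}(A).
\]

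Finally, since $\GG^{t_0}(A)<\infty$ and $\ys_\delta<1$, letting $n\to\infty$ in (\ref{eq:falconer_measure2}) yields $\GG^{t}(A)=0$, which is the claim. The only nontrivial step is the BVP swap between $\mu_t$ and $\mu_{t_0}$; once this is in place, the argument is the standard Hausdorff-measure/dimension monotonicity dressed in cylinder-function clothing.
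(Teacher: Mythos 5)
Your proposal is correct and follows essentially the same route as the paper: apply condition (3) of the cylinder function with $\delta=t-t_0$ to gain the factor $\ys_{t-t_0}^{|\iii_j|}\le\ys_{t-t_0}^n$, use the BVP to swap the integrating measure from $\mu_t$ to $\mu_{t_0}$ (the paper gets the constant $K_tK_{t_0}$ where you get $K_{t_0}^2$; both are harmless), and let $n\to\infty$. The only cosmetic difference is that the paper works with a near-optimal cover achieving $\GG_n^{t_0}(A)+1$ while you take the infimum directly, which is equivalent.
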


\begin{proof}
  Let $n \in \N$ and choose a collection of cylinder sets
  $\{[\iii_j]\}_j$ such that $|\iii_j| \ge n$ and $\sum_j
  \int_{I^\infty} \psi_{\iii_j}^{t_0}(\hhh) d\mu_{t_0}(\hhh) \le \GG_n^{t_0}(A)
  + 1$. Then
  \begin{align}
    \GG_n^t(A) &\le \sum_j \int_{I^\infty} \psi_{\iii_j}^t(\hhh)
    d\mu_t(\hhh) \le K_tK_{t_0} \sum_j \int_{I^\infty}
    \psi_{\iii_j}^{t_0}(\hhh)
    d\mu_{t_0}(\hhh) \ys_{t-t_0}^{|\iii_j|} \notag \\
    &\le K_tK_{t_0}\ys_{t-t_0}^n \bigl( \GG_n^{t_0}(A) + 1 \bigr).
  \end{align}
  By letting $n \to \infty$ we have finished the proof.
\end{proof}

Using this lemma, we may now define
\begin{align}
  \dime(A) &= \inf\{ t \ge 0 : \GG^t(A) = 0 \} \notag \\
           &= \sup\{ t \ge 0 : \GG^t(A) = \infty \}
\end{align}
and we call this ``critical value'' the \emph{equilibrium dimension}
of the set $A \subset I^\infty$. Notice that the equilibrium dimension
does not depend on the measure $\mu_t$. In fact, defining the
measure $\GG^t$ by using a fixed $\hhh \in I^\infty$ instead of the
integral average in (\ref{eq:falconer_measure1}), leads us for the
same definition of the equilibrium dimension due to the BVP. The most
important property of the equilibrium dimension is the following
theorem.

\begin{theorem} \label{thm:eqdim}
  $P(t)=0$ if and only if $\dime(I^\infty)=t$.
\end{theorem}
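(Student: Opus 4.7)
The plan is to use Proposition \ref{thm:perusomin}(4) to reduce the biconditional to the single identity $\dime(I^\infty)=t^*$, where $t^*$ is the unique zero of $P$. I will then prove this equality as two covering estimates, extracted from the definition (\ref{eq:falconer_measure1}) of $\GG_n^t$ together with the BVP, which lets me pass between $\int\psi_\iii^t\,d\mu_t$ and $\psi_\iii^t(\hhh)$ at the cost of a constant factor $K_t$.

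For the upper bound, fix any $t>t^*$, so that $P(t)<0$. Testing the infimum in (\ref{eq:falconer_measure1}) against the full level-$n$ partition $\{[\iii]:\iii\in I^n\}$ and applying the BVP,
\[
  \GG_n^t(I^\infty)\;\le\;\sum_{\iii\in I^n}\int\psi_\iii^t\,d\mu_t\;\le\;K_t\sum_{\iii\in I^n}\psi_\iii^t(\hhh).
\]
By the defining limit of $P(t)$ the right-hand side is at most $K_t\,e^{n(P(t)+\eps)}$ for any $\eps>0$ and all large $n$; picking $\eps\in(0,-P(t))$ forces $\GG^t(I^\infty)=0$, hence $\dime(I^\infty)\le t$. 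Letting $t\searrow t^*$ yields $\dime(I^\infty)\le t^*$.

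For the lower bound, fix any $t<t^*$, so that $P(t)>0$. The crucial input is equation (1.7) of the introduction, obtained from Shannon--McMillan together with Kingman's subadditive ergodic theorem applied to $\log\psi_{\iii|_n}^t(\hhh)$: for the $t$-equilibrium measure $\mu_t$ produced in Theorem \ref{thm:equilibrium},
\[
  \tfrac{1}{n}\log\frac{\psi^t_{\iii|_n}(\hhh)}{\mu_t([\iii|_n])}\;\longrightarrow\;P(t)\qquad\text{for $\mu_t$-a.e.\ $\iii$.}
\]
Fix $\eps\in(0,P(t))$ and use Egorov's theorem to find $G\subset I^\infty$ with $\mu_t(G)\ge\tfrac12$ and $n_0\in\N$ such that $\mu_t([\iii|_k])\le e^{-k(P(t)-\eps)}\psi^t_{\iii|_k}(\hhh)$ whenever $\iii\in G$ and $k\ge n_0$. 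For any cover $\{[\iii_j]\}$ of $I^\infty$ with $|\iii_j|\ge n\ge n_0$ and each $j$ meeting $G$, choose $\iii\in G\cap[\iii_j]$ and apply the Egorov estimate at $k=|\iii_j|$; two uses of the BVP convert it into
\[
  \mu_t([\iii_j])\;\le\;K_t^2\,e^{-|\iii_j|(P(t)-\eps)}\int\psi^t_{\iii_j}\,d\mu_t\;\le\;K_t^2\,e^{-n(P(t)-\eps)}\int\psi^t_{\iii_j}\,d\mu_t,
\]
where the second inequality uses $|\iii_j|\ge n$ and $P(t)-\eps>0$. Summing over $j$ meeting $G$ and invoking $\mu_t(G)\le\sum_j\mu_t([\iii_j])$ gives $\GG_n^t(I^\infty)\ge \tfrac{1}{2K_t^2}e^{n(P(t)-\eps)}\to\infty$, whence $\GG^t(I^\infty)=\infty$ and $\dime(I^\infty)\ge t$. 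Letting $t\nearrow t^*$ completes the proof.

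The main obstacle is the lower bound: the pointwise $\mu_t$-almost-sure statement (1.7) must be converted into a uniform covering inequality. Egorov accomplishes exactly this, at the cost of restricting to a subset of $\mu_t$-mass $\ge\tfrac12$, and the sign condition $P(t)-\eps>0$ is what allows the bound $|\iii_j|\ge n$ to be exploited so that $\GG_n^t(I^\infty)$ actually blows up, rather than merely stays positive. If (1.7) is not considered available at this stage, the same argument goes through with the weaker input that $\mu_t\bigl(\{\iii:\liminf_n\tfrac{1}{n}\log(\psi^t_{\iii|_n}(\hhh)/\mu_t([\iii|_n]))\ge P(t)-\eps\}\bigr)>0$, which follows from Kingman and Shannon--McMillan plus the fact that the $\mu_t$-integral of the almost-sure limit equals $h_{\mu_t}+E_{\mu_t}(t)=P(t)$.
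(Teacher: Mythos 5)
Your argument is correct. The first half ($P(t)<0\Rightarrow\GG^t(I^\infty)=0$, by testing the infimum defining $\GG_n^t$ against the level-$n$ partition and using the BVP) is essentially the paper's, which merely routes the bound $\sum_{\iii\in I^n}\psi_\iii^t(\hhh)\le K_t\Pi_n$ through the measures of Theorem \ref{thm:aakolme} rather than invoking the limit defining $P(t)$ directly. The second half is where you genuinely diverge. The paper proves the contrapositive $t>\dime(I^\infty)\Rightarrow P(t)\le 0$ by a purely combinatorial device of Falconer: extract from $\GG^t(I^\infty)=0$ a finite incomparable cover $A$ with $\sum_{\iii\in A}\psi_\iii^t(\hhh)<K_t^{-1}$, iterate it via the subchain rule and the BVP to build the covers $A_n$, and conclude that $\sum_{\iii\in I^n}\psi_\iii^t(\hhh)$ stays bounded; no measure is needed. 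You instead prove $P(t)>0\Rightarrow\GG^t(I^\infty)=\infty$ by pushing mass: the $t$-equilibrium measure together with Shannon--McMillan, Kingman and Egorov yields a set $G$ of positive measure on which $\mu_t([\iii|_k])\le K_t e^{-k(P(t)-\eps)}\int\psi^t_{\iii|_k}\,d\mu_t$ uniformly for $k\ge n_0$, and the covering inequality $\mu_t(G)\le\sum_j\mu_t([\iii_j])$ then forces $\GG_n^t(I^\infty)\to\infty$. Two remarks. First, the almost-sure identity you quote from the introduction is only established in the paper in Proposition \ref{thm:dimensiobee}, whose proof assumes the equilibrium measure is \emph{ergodic} (Theorem \ref{thm:ergodisuus}, which appears after Theorem \ref{thm:eqdim}); there is no circularity, since neither result uses Theorem \ref{thm:eqdim}, but at this stage of the development the fallback you sketch at the end --- that the set where the liminf exceeds $P(t)-\eps$ has positive $\mu_t$-measure because the a.e.\ limit supplied by Kingman and Shannon--McMillan for a merely invariant measure integrates to $h_{\mu_t}+E_{\mu_t}(t)=P(t)$ --- is the version you should actually run, and it suffices since Egorov only needs a set of positive (not half) measure. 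Second, as to what each approach buys: the paper's argument is self-contained and uses nothing beyond the subchain rule, whereas yours requires Theorem \ref{thm:equilibrium} as input but delivers the stronger conclusion $\GG^t(I^\infty)=\infty$ for every $t$ with $P(t)>0$, and it is precisely the mechanism the paper itself deploys later in the proof of Theorem \ref{thm:dimensiocee}, so the machinery is not wasted.
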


\begin{proof}
  Let us first show that $P(t)<0$ implies $\dime(I^\infty) \le
  t$. Using the BVP, we derive from Theorem \ref{thm:aakolme}
  \begin{align}
    1 &= \sum_{\iii \in I^n} \nu_n([\iii]) = \Pi_n^{-1} \sum_{\iii \in
    I^n} \int_{I^\infty} \psi_\iii^t(\hhh) d\nu_n(\hhh) \notag \\
    &\ge K_t^{-1} \Pi_n^{-1} \sum_{\iii \in I^n} \psi_\iii^t(\hhh),
  \end{align}
  where $\lim_{n \to \infty} \Pi_n^{1/n} = e^{P(t)}$. Now
  \begin{equation}
    \limsup_{n \to \infty} \Biggl( \sum_{\iii \in I^n}
    \psi_\iii^t(\hhh) \Biggr)^{1/n} \le \lim_{n \to \infty}
    (K_t\Pi_n)^{1/n} = e^{P(t)} < 1
  \end{equation}
  and choosing $n_0$ big enough, we have
  \begin{equation}
    \Biggl( \sum_{\iii \in I^n} \psi_\iii^t(\hhh) \Biggr)^{1/n} <
    \frac{1 + e^{P(t)}}{2} < 1
  \end{equation}
  whenever $n \ge n_0$.
  Hence, for any given $\eps > 0$ there exists $n_1 \in \N$ such that
  \begin{equation}
    \sum_{\iii \in I^n} \int_{I^\infty} \psi_\iii^t(\hhh) d\mu(\hhh) < \eps
  \end{equation}
  whenever $n \ge n_1$. This proves the claim.

  For the convenience of the reader, to prove the other direction we
  repeat here the argument of Falconer from \cite{fa1}.
  Let us assume that $t > \dime(I^\infty)$ and $\hhh \in
  I^\infty$. Then, clearly,
  $\GG^t(I^\infty)=0$ and we may choose a finite cover for $I^\infty$ of
  the form $\{ [\iii] : \iii \in A \subset \bigcup_{j=1}^{n_0} I^j
  \}$, where $n_0 \in \N$ is large enough and $A$ is some incomparable
  set such that
  \begin{equation}
    \sum_{\iii \in A} \psi_\iii^t(\hhh) < K_t^{-1}.
  \end{equation}
  Here we can choose a finite cover, since any infinite collection of
  disjoint cylinders will not cover the whole $I^\infty$. Define now
  for each integer $n \ge n_0$ a set
  \begin{align}
    A_n = \{ \iii_1,\ldots,\iii_q \in I^* :\; &\iii_j \in A \text{ as }
             j=1,\ldots,q \text{ with some } q, \notag \\
             &|\iii_1,\ldots,\iii_q| \ge n \text{ and }
             |\iii_1,\ldots,\iii_{q-1}| \le n \}.
  \end{align}
  Now, using the subchain rule, we get with any choice of $\jjj \in I^*$
  \begin{equation}
    \sum_{\iii \in A} \psi_{\jjj,\iii}^t(\hhh) \le K_t
    \psi_\jjj^t(\hhh) \sum_{\iii \in A} \psi_\iii^t(\hhh) \le
    \psi_\jjj^t(\hhh)
  \end{equation}
  whenever $\hhh \in I^\infty$. Thus, inductively, we get for every $n
  \ge n_0$
  \begin{equation}
    \sum_{\iii \in A_n} \psi_\iii^t(\hhh) \le K_t^{-1}.
  \end{equation}
  Assuming $\iii \in I^{n+n_0}$, we have $\iii = \jjj,\kkk$ for some
  $\jjj \in A_n$ and $\kkk \in I^*$ with $|\kkk| \le n_0$. Moreover,
  for each such $\jjj$ there are at most $(\#I)^{n_0}$ such
  $\kkk$. Since $\psi_\iii^t(\hhh) \le \psi_\jjj^t(\kkk,\hhh)
  \ys_t^{|\kkk|} \le \psi_\jjj^t(\kkk,\hhh)$, we have
  \begin{equation}
    \sum_{\iii \in I^{n+n_0}} \psi_\iii^t(\hhh) \le (\#I)^{n_0} K_t
  \sum_{\jjj \in A_n} \psi_\jjj^t(\hhh) \le (\#I)^{n_0}
  \end{equation}
  for all $n \in \N$. From this we derive that $P(t)\le 0$. This also
  finishes the proof.
\end{proof}

So far we have worked only in the symbol space. It has provided us with
a simple structured environment for finding measures with desired
properties. It is, however, more interesting to study geometric
projections of these measures and the symbol space. In the following
we define what we mean by this geometric projection.
Let $X \subset \R^d$ be a compact set with nonempty interior. Choose
then a collection $\{ X_\iii : \iii \in I^* \}$ of nonempty closed
subsets of $X$ satisfying
\begin{itemize}
  \item[(1)] $X_{\iii,i} \subset X_\iii$ for every $\iii \in I^*$ and
  $i \in I$,
  \item[(2)] $d(X_\iii) \to 0$, as $|\iii| \to \infty$.
\end{itemize}
Here $d$ means the diameter of a given set. Define now a
\emph{projection mapping} $\pi : I^\infty \to X$ such that
\begin{equation}
  \{ \pi(\iii) \} = \bigcap_{n=1}^\infty X_{\iii|_n}
\end{equation}
as $\iii \in I^\infty$. It is clear that $\pi$ is continuous. We call
the compact set $E = \pi(I^\infty)$ as the \emph{limit set} of this
collection, and if there is no danger of misunderstanding, we also
call the projected cylinder set a cylinder set.

We could now define a cylinder function for this collection of
sets. But without any additional information the equilibrium dimension has
most likely nothing to do with the Hausdorff dimension of the limit
set. Therefore, in order to determine the Hausdorff dimension, it
is natural to require that the cylinder function somehow represents
the size of the subset $X_\iii$ and also that there is not too much
overlapping among these sets. The use of iterated function systems
with well--chosen mappings and separation condition will provide us
with the sufficient information we need.

Take now $\Omega \supset X$ to be an open subset
of $\R^d$. Let $\{ \fii_\iii : i \in I^* \}$ be a collection of contractive
injections from $\Omega$ to $\Omega$ such that the collection $\{
\fii_\iii(X) : \iii \in I^* \}$ satisfies both properties (1) and (2)
above. By \emph{contractivity} we mean that for every $\iii
\in I^*$ there exists a constant $0< s_\iii <1$ such that
$|\fii_\iii(x)-\fii_\iii(y)| \le
s_\iii |x-y|$ whenever $x,y \in \Omega$. This kind of collection is called
a \emph{general iterated function system}. Furthermore, we call the
collection $\{ \fii_i : i \in I \}$ of the same kind of mappings an
\emph{iterated function system (IFS)}. Defining
$\fii_\iii = \fii_{i_1} \pallo \cdots \pallo
\fii_{i_{|\iii|}}$, as $\iii \in I^*$, we clearly get the
assumptions of general IFS satisfied. In fact, we have $d\bigl(
\fii_\iii(X) \bigr) \le (\max_{i \in I} s_i)^{|\iii|}d(X)$.

To avoid too much overlapping, we need a decent separation condition
for the subsets $\fii_\iii(X)$. We say that a \emph{strong separation
condition (SSC)} is satisfied if $\fii_\iii(X) \cap \fii_\jjj(X) =
\emptyset$ whenever $\iii$ and $\jjj$ are incomparable. For IFS it
suffices to require $\fii_i(X) \cap \fii_j(X) = \emptyset$ for $i \ne
j$. Of course, assuming the SSC would be enough in many cases, but it is a
rather restrictive assumption, and usually we
do not need that much. We say that an \emph{open set condition (OSC)} is
satisfied if $\fii_\iii\bigl( \text{int}(X) \bigr) \cap \fii_\jjj\bigl(
\text{int}(X) \bigr) = \emptyset$ whenever $\iii$ and $\jjj$ are
incomparable. Again, for IFS it suffices to require $\fii_i\bigl(
\text{int}(X) \bigr) \cap \fii_j\bigl(
\text{int}(X) \bigr) = \emptyset$ for $i \ne j$. With the notation
$\text{int}(X)$ we mean the interior of $X$. Furthermore, we say that
a general IFS
has \emph{weak bounded overlapping} if the cardinality of incomparable
subsets of $\{ \iii \in I^* : x \in \fii_\iii(X) \}$
is uniformly bounded as $x \in X$. Trivially, a general IFS satisfying the SSC
has weak bounded overlapping. Assume now that for each $\iii \in I^*$ there
exists a constant $0 < \as_\iii < 1$  such that $\as_\iii \to 0$ as
$|\iii| \to \infty$. Then we say that a general IFS has \emph{bounded
overlapping} if the cardinality of the set $Z(x,r) = \{ \iii \in Z(r)
: \fii_\iii(X) \cap B(x,r) \ne \emptyset \}$ is uniformly bounded as
$x \in X$ and $0<r<r_0=r_0(x)$. Here $Z(r)$ is an incomparable subset
of $\{ \iii \in I^* : \as_\iii < r \le \as_{\iii|_{|\iii|-1}} \}$ such
that $\{ [\iii] : \iii \in Z(r) \}$ is a cover for $I^\infty$. We will
choose the constants $\as_\iii$ rigorously in a while. Next we study
how these separation conditions are related.

\begin{lemma} \label{thm:stable_a}
  Suppose a general IFS has bounded overlapping. Then it has also weak
  bounded overlapping.
\end{lemma}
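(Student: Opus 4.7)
The plan is to reduce to finite incomparable subsets and then match each element to a distinct element of $Z(x,r)$ for a sufficiently small scale $r$.

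First I would fix $x \in X$ and an arbitrary incomparable $A \subset \{\iii \in I^* : x \in \fii_\iii(X)\}$. Since the conclusion ``$\#A \le N$'' is preserved under taking suprema over finite subsets, it is enough to bound $\#A_0$ by the uniform constant $N$ from the bounded overlapping hypothesis for every finite incomparable $A_0 \subset A$. So I would fix such an $A_0 = \{\iii_1,\ldots,\iii_k\}$ and choose a scale $r$ with
\begin{equation*}
  0 < r < \min\bigl\{ r_0(x),\ \as_{\iii_1},\ldots,\as_{\iii_k} \bigr\},
\end{equation*}
which is positive because $A_0$ is finite.

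Next, for each $\iii_j$ I would pick an arbitrary extension $\iii_j' \in [\iii_j] \cap I^\infty$. Because $\{[\jjj] : \jjj \in Z(r)\}$ covers $I^\infty$ and $Z(r)$ is incomparable, there is a unique $\jjj_j \in Z(r)$ with $\iii_j' \in [\jjj_j]$. The key step is to show that $\jjj_j$ extends $\iii_j$ rather than the other way around: if $|\jjj_j| \le |\iii_j|$, then $\jjj_j$ would be a prefix of $\iii_j$, so by the (implicit) monotonicity of $\as$ along prefixes we would get $\as_{\iii_j} \le \as_{\jjj_j} < r$, contradicting $r < \as_{\iii_j}$. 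Hence $\jjj_j$ strictly extends $\iii_j$, and therefore $\fii_{\jjj_j}(X) \subset \fii_{\iii_j}(X)$, so $x \in \fii_{\jjj_j}(X)$ and in particular $\fii_{\jjj_j}(X) \cap B(x,r) \ne \emptyset$; this places $\jjj_j$ in $Z(x,r)$.

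Finally, I would check that the map $j \mapsto \jjj_j$ is injective. Indeed, if $\jjj_i = \jjj_j$, then both $\iii_i$ and $\iii_j$ are prefixes of this common word, so one is a prefix of the other, contradicting the incomparability of $A_0$. Consequently
\begin{equation*}
  \#A_0 \le \#Z(x,r) \le N,
\end{equation*}
and the conclusion follows. The main conceptual point, and the only potentially delicate one, is pinning down the direction of extension between $\iii_j$ and $\jjj_j$; this is what forces the choice $r < \as_{\iii_j}$ and uses the monotonicity of $\as$ along extensions inherent in the definition of $Z(r)$.
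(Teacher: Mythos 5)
Your reduction to finite incomparable subsets, the choice of the scale $r<\min\{r_0(x),\as_{\iii_1},\ldots,\as_{\iii_k}\}$, the identification of $\jjj_j$ as a proper extension of $\iii_j$, and the injectivity of $j\mapsto\jjj_j$ are all fine. The genuine gap is the inference ``$\fii_{\jjj_j}(X)\subset\fii_{\iii_j}(X)$, so $x\in\fii_{\jjj_j}(X)$'': the inclusion runs the wrong way for membership to transfer, and nothing in the hypotheses guarantees that the point $x\in\fii_{\iii_j}(X)$ belongs to --- or even lies within distance $r$ of --- any cylinder set $\fii_{\jjj}(X)$ with $\jjj\in Z(r)$ extending $\iii_j$. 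The children $\fii_{\iii_j,i}(X)$, $i\in I$, are only required to be \emph{subsets} of $\fii_{\iii_j}(X)$, not to cover it, so $x$ may sit in the part of $\fii_{\iii_j}(X)$ left uncovered by all deeper cylinders; the only a priori bound on $d\bigl(x,\fii_{\jjj_j}(X)\bigr)$ is $d\bigl(\fii_{\iii_j}(X)\bigr)$, which is in general far larger than the scale $r<\as_{\iii_j}$ at which $Z(r)$ lives. Hence the crucial conclusion $\jjj_j\in Z(x,r)$ does not follow, and this is exactly the step on which the whole counting argument rests. (The extra choice of the infinite word $\iii_j'$ does not help: $x\in\fii_{\iii_j}(X)$ does not produce an $\iii_j'\in[\iii_j]$ with $x\in\fii_{\iii_j'|_n}(X)$ for all $n$.)

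For comparison, the paper argues the contrapositive and, crucially, never descends: it uses the trivial inclusion $R(x)\cap Z(r)\subset Z(x,r)$, where $R(x)=\{\iii\in I^*: x\in\fii_\iii(X)\}$, i.e.\ it only counts those symbols of $R(x)$ that are \emph{already} at scale $r$, for which membership of $x$ in the cylinder set is part of the hypothesis, and then claims that failure of weak bounded overlapping forces $\sup_{x\in X}\#\bigl(R(x)\cap Z(r)\bigr)\to\infty$ as $r\searrow 0$. Whichever direction one argues, the essential point is to exhibit, for each member of a large incomparable family through $x$, a symbol at scale $r$ whose cylinder set genuinely meets $B(x,r)$; replacing $\iii_j$ by an arbitrary descendant discards precisely the property $x\in\fii_{\iii_j}(X)$ that you were trying to exploit, so this step needs a different idea (or an additional covering hypothesis on the sets $X_\iii$) before the proof can be accepted.
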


\begin{proof}
  If the weak bounded overlapping is not satisfied, then the cardinality of
  incomparable subsets of $R(x) = \{ \iii \in I^* : x \in \fii_\iii(X)
  \}$ is not uniformly bounded as $x \in X$. Therefore, $\sup_{x \in
  X} \# \bigl( R(x) \cap Z(r) \bigr) \to \infty$, as $r \searrow
  0$. On the other hand, $R(x) \cap Z(r) \subset Z(x,r)$ for all $x
  \in X$ and $r > 0$, which gives a contradiction.
\end{proof}

It seems that by assuming only the mappings of a general IFS to be
Lipschitz it
is very difficult to get information about the Hausdorff dimension of
the limit set. While the Lipschitz condition provides us with an upper bound
for the diameter of the cylinder set, it does not give any kind of
lower bound for the size of the cylinder set. Having the lower
bound seems to be crucial for getting this kind of information.
Assuming the mappings $\fii_\iii$ to be bi--Lipschitz, we denote
the ``maximal derivative'' with
\begin{equation}
  L_\iii(x) = \limsup_{y \to x} \frac{|\fii_\iii(x) - \fii_\iii(y)|}{|x-y|}
\end{equation}
and the ``minimal derivative'' with
\begin{equation}
  l_\iii(x) = \liminf_{y \to x} \frac{|\fii_\iii(x) - \fii_\iii(y)|}{|x-y|}.
\end{equation}
We say that a
general IFS is \emph{bi--Lipschitz} if the mappings $\fii_\iii$ are
bi--Lipschitz and there exist cylinder functions $\apsi_\iii^t$ and
$\ypsi_\iii^t$ satisfying the chain rule such that $\apsi_\iii^t(\hhh) \le
l_\iii\bigl( \pi(\hhh) \bigr)^t$ and $\ypsi_\iii^t(\hhh) \ge
L_\iii\bigl( \pi(\hhh) \bigr)^t$ for all $\hhh \in I^\infty$, and in both
functions the parameter $t$ is an exponent, that is,
$\apsi_\iii^t(\hhh) = \bigl( \apsi_\iii^1(\hhh) \bigr)^t$ and
$\ypsi_\iii^t(\hhh) = \bigl( \ypsi_\iii^1(\hhh) \bigr)^t$. We also
assume that the bi--Lipschitz constants for the mappings $\fii_\iii$
are $\as_\iii = \inf_{\hhh \in I^\infty} \apsi_\iii^1(\hhh)$ and
$\ys_\iii = \sup_{\hhh \in I^\infty} \ypsi_\iii^1(\hhh)$. From now
on, these are the constants $\as_\iii$ we will use in the definition
of the bounded overlapping.

\begin{lemma} \label{thm:blifsssc}
  A bi--Lipschitz IFS satisfying the SSC has bounded overlapping.
\end{lemma}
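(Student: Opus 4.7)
The plan is to combine the strict positivity of the SSC gap with the lower bi\textendash Lipschitz bound to turn the disjointness of cylinder images into a quantitative separation proportional to $r$, and then to close with a standard packing argument in $B(x,r)$.

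First, compactness of each $\fii_i(X)$ together with finiteness of $I$ upgrades the SSC to a positive gap
\begin{equation*}
  \delta \;=\; \min_{i\ne j}\textrm{dist}\bigl(\fii_i(X),\fii_j(X)\bigr) \;>\; 0.
\end{equation*}
Given any incomparable $\iii,\jjj\in I^*$ with longest common prefix $\kkk$, write $\iii=\kkk,i,\iii''$ and $\jjj=\kkk,j,\jjj''$ with $i\ne j$, so that $\fii_\iii(X)\subset\fii_\kkk(\fii_i(X))$ and $\fii_\jjj(X)\subset\fii_\kkk(\fii_j(X))$. The lower bi\textendash Lipschitz bound for $\fii_\kkk$ then yields
\begin{equation*}
  \textrm{dist}\bigl(\fii_\iii(X),\fii_\jjj(X)\bigr) \;\ge\; \as_\kkk\,\delta .
\end{equation*}

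Next I would show that $\as_\kkk\ge r/K_1$ whenever $\iii,\jjj\in Z(r)$. By definition of $Z(r)$, $r\le\as_{\iii|_{|\iii|-1}}$, and $\kkk$ is a prefix of $\iii|_{|\iii|-1}$. If $\kkk=\iii|_{|\iii|-1}$ there is nothing to do. Otherwise the chain rule for $\apsi^1$ factorises
\begin{equation*}
  \apsi^1_{\iii|_{|\iii|-1}}(\hhh)
  \;=\; \apsi^1_\kkk\bigl(\sigma^{|\kkk|}(\iii|_{|\iii|-1}),\hhh\bigr)\cdot\apsi^1_{\mathrm{rest}}(\hhh),
\end{equation*}
and condition~(3) in the definition of a cylinder function forces the tail factor to be strictly less than~$1$ (since the tail has nonzero length). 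The BVP for $\apsi^1$ then gives $\apsi^1_\kkk(\cdot)\le K_1\as_\kkk$, so taking infima in $\hhh$ yields $\as_{\iii|_{|\iii|-1}}\le K_1\as_\kkk$, hence $\as_\kkk\ge r/K_1$. Combined with the first step, any two distinct $\iii,\jjj\in Z(r)$ satisfy
\begin{equation*}
  \textrm{dist}\bigl(\fii_\iii(X),\fii_\jjj(X)\bigr) \;\ge\; r\delta/K_1.
\end{equation*}

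Finally, for each $\iii\in Z(x,r)$ I would pick a witness $y_\iii\in\fii_\iii(X)\cap B(x,r)$. These witnesses all lie in $B(x,r)$ and are pairwise at distance at least $r\delta/K_1$, so the balls $B\bigl(y_\iii,r\delta/(2K_1)\bigr)$ are disjoint and contained in $B\bigl(x,r(1+\delta/(2K_1))\bigr)$; comparing $d$\textendash dimensional Lebesgue volumes in $\R^d$ bounds $\#Z(x,r)$ by a constant depending only on $d,\delta,K_1$, uniformly in $x$ and $r$, which is exactly the bounded overlapping condition (valid for $r$ smaller than some $r_0$, which is all the definition requires).

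The main obstacle is the middle step: extracting the comparison $\as_{\iii|_{|\iii|-1}}\le K_1\as_\kkk$ purely from the abstract hypotheses on $\apsi^1$. One needs both the strict subunitality of cylinder\textendash function values on nonempty words (from condition~(3)), so that dropping the tail in the chain rule only makes things smaller, and the uniformity of the BVP constant $K_1$ in $\kkk$. Everything else\textemdash the SSC gap, the bi\textendash Lipschitz contraction of distances, and the volume packing\textemdash is routine.
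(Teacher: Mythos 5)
Your proof is correct, and its first half coincides with the paper's: both extract the positive SSC gap ($\delta$ in your notation, $q$ in the paper's), push the separation of $\fii_i(X)$ and $\fii_j(X)$ through $\fii_\kkk$ via the lower bi--Lipschitz bound to get $d\bigl(\fii_\iii(X),\fii_\jjj(X)\bigr) \ge \as_\kkk\delta$ for incomparable $\iii,\jjj$ with common prefix $\kkk$, and then compare $\as_\kkk$ with $\as_{\iii|_{|\iii|-1}} \ge r$ to conclude that distinct elements of $Z(r)$ have images separated by a constant multiple of $r$. (You are in fact slightly more careful at this point: the paper writes $\as_{\iii|_j} \ge \as_{\iii|_{n-1}}$ as though $\as$ were monotone along prefixes, which from the abstract axioms on $\apsi^1_\iii$ only holds up to the BVP constant $\underline{K}_1$ that you correctly insert via the chain rule and condition (3).) The two arguments diverge in how they convert this separation into a cardinality bound. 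The paper stays symbolic: the separation forces a unique $\hhh \in Z(r/q)$ whose image meets $B(x,r)$, every $\jjj \in Z(x,r)$ is then an extension $\hhh,\kkk$, and the length of $\kkk$ is bounded because a long tail would push $\as_{\jjj|_{|\jjj|-1}}$ below $r$, yielding the bound $(\#I)^k$. You instead choose witnesses $y_\iii \in \fii_\iii(X)\cap B(x,r)$ and run a volume--packing argument with disjoint balls of radius $r\delta/(2K_1)$ inside $B\bigl(x,r(1+\delta/(2K_1))\bigr)$. Your finish is shorter and gives a bound uniform in all $r>0$ (no $r_0$ needed); the paper's finish avoids any appeal to Lebesgue measure in $\R^d$ and reuses the descendant--counting device it employs elsewhere. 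Both are complete proofs of the lemma.
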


\begin{proof}
  We use the idea found in the proof of Proposition 9.7 of Falconer
  \cite{fa3}.
  Denote $q = \min_{i \ne j} d\bigl( \fii_i(X),\fii_j(X)
  \bigr)$, where $d$ means the distance between two given
  sets, and take $x \in E$ and $r > 0$. We can take $x$ from $E$ since
  otherwise there is nothing to prove. Choose $\iii \in I^\infty$ such
  that $x = \pi(\iii)$. Since now $\fii_{\iii|_n}(X) \cap B(x,r) \ne
  \emptyset$ for every $n \in \N$, we can choose $n$ such that
  $\iii|_n \in Z(x,r)$. Take also an arbitrary $\jjj \in Z(r)$ such that
  $\jjj \ne \iii|_n$ and let $0 \le j < n$ be the largest integer for
  which $\jjj|_j = \iii|_j$. If it were
  $d\bigl( \fii_{\iii|_n}(X),\fii_\jjj(X) \bigr) <
  \as_{\iii|_j}q$, there would be $y \in \fii_{\iii|_n}(X)$ and $z
  \in \fii_\jjj(X)$ such that
  \begin{equation}
    |y-z| < \as_{\iii|_j} q.
  \end{equation}
  The bi--Lipschitz condition implies $|(\fii_{\iii|_j})^{-1}(y) -
  (\fii_{\iii|_j})^{-1}(z)| < q$, which contradicts the strong
  separation assumption due to the choice of $j$. Hence
  \begin{equation}
    d\bigl( \fii_{\iii|_n}(X),\fii_\jjj(X) \bigr) \ge
    \as_{\iii|_j} q \ge \as_{\iii|_{n-1}} q \ge rq
  \end{equation}
  and thus $\iii|_n$ is the only symbol in $Z(r)$ with
  $\fii_{\iii|_n}(X) \cap B(x,rq) \ne \emptyset$. This also means that
  there exists exactly one $\hhh \in Z(r/q)$ for which $\fii_\hhh(X)
  \cap B(x,r) \ne \emptyset$. Take now an arbitrary $\jjj \in 
  Z(x,r)$ and assuming $q<1$ we notice that $\jjj = \hhh,\kkk$ for
  some $\kkk \in
  I^*$. Choose the smallest integer $k$ such that $\as_\kkk <
  q/\underline{K}_1$ for all $\kkk \in I^*$ for which $|\kkk| \ge
  k$. Here $\underline{K}_t$ is the constant from the BVP of the
  cylinder function $\apsi_\iii^t$. Hence if it were $\jjj =
  \hhh,\kkk$ for some $\kkk \in I^*$ for which $|\kkk| > k$, it would
  hold that
  \begin{equation}
    \as_{\jjj|_{|\jjj|-1}} \le \underline{K}_1 \as_\hhh
    \as_{\kkk|_{|\kkk|-1}} < r
  \end{equation}
  and therefore $\jjj$ could not be in 
  $Z(x,r)$. Thus there can be at maximum $(\# I)^k$ of such $\kkk$
  and hence $\# Z(x,r) \le (\# I)^k$.
\end{proof}

It seems to be important that the shape of the open set of the
OSC would not be too ``wild'', and, therefore, the shape of the cylinder
sets, or rather the sets $\fii_\iii(X)$, is under
control. See also Theorem 4.9 of Graf, Mauldin and Williams
\cite{gmw}. Motivated by this, we say that the \emph{boundary
condition} is satisfied if there exists $\roo_0>0$ such that
\begin{equation}
  \inf_{x \in \partial X} \inf_{0<r<\roo_0} \frac{\HH^d\bigl( B(x,r) \cap
  \text{int}(X) \bigr)}{\HH^d\bigl( B(x,r) \bigr)} > 0,
\end{equation}
where $\partial X$ denotes the boundary of the set $X$. This condition
says that the boundary of $X$ cannot be too ``thick''; for example, recalling
the Lebesgue density theorem, we have $\HH^d(\partial X) = 0$. The boundary
condition is clearly satisfied if the set $X$ is convex.

\begin{proposition} \label{thm:blifsoscbc}
  A bi--Lipschitz general IFS satisfying the OSC and the boundary
  condition has weak bounded overlapping if $\ys_\iii / \as_\iii$ is
  bounded as $\iii \in I^*$.
\end{proposition}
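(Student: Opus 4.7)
My aim is to bound the cardinality of any incomparable set $A \subset R(x) := \{\iii \in I^* : x \in \fii_\iii(X)\}$ by a constant depending only on $d$, $K := \sup_{\iii \in I^*} \ys_\iii/\as_\iii$, the boundary condition constants, and the geometry of $X$, uniformly in $x \in X$. Since the bound will be independent of $A$, it suffices to bound an arbitrary \emph{finite} antichain $A \subset R(x)$.

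\textbf{Step 1: Uniform interior density on $X$.} First I would upgrade the boundary condition to a statement valid at every point of $X$: there exist $c_1 > 0$ and $\roo_1 > 0$ with
\begin{equation*}
  \HH^d\bigl( B(y,s) \cap \text{int}(X) \bigr) \ge c_1 s^d \quad \text{for all } y \in X,\; 0 < s < \roo_1.
\end{equation*}
If $B(y,s/2) \subset \text{int}(X)$ this is immediate; otherwise $\partial X$ meets $B(y,s/2)$ and the boundary condition applied to a boundary point $z$ at distance $\le s/2$ from $y$ gives the estimate (with a smaller constant) on $B(z,s/2) \subset B(y,s)$.

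\textbf{Step 2: Pulling back a common ball.} Since each $\fii_\iii$ is injective, for every $\iii \in A$ there is a unique $y_\iii \in X$ with $\fii_\iii(y_\iii) = x$. Let
\begin{equation*}
  r_0 = \tfrac{1}{2} \roo_1 \min_{\iii \in A} \ys_\iii > 0,
\end{equation*}
which is positive because $A$ is finite. For each $\iii \in A$ the upper Lipschitz estimate yields $\fii_\iii\bigl( B(y_\iii, r_0/\ys_\iii) \bigr) \subset B(x, r_0)$, so
\begin{equation*}
  B(x, r_0) \cap \fii_\iii\bigl( \text{int}(X) \bigr)
  \supset \fii_\iii\bigl( B(y_\iii, r_0/\ys_\iii) \cap \text{int}(X) \bigr).
\end{equation*}
Since $r_0/\ys_\iii \le \roo_1/2 < \roo_1$, Step 1 and the lower Lipschitz estimate $\HH^d(\fii_\iii(E)) \ge \as_\iii^d \HH^d(E)$ give
\begin{equation*}
  \HH^d\bigl( B(x, r_0) \cap \fii_\iii(\text{int}(X)) \bigr)
  \ge \as_\iii^d \, c_1 (r_0/\ys_\iii)^d
  \ge c_1 K^{-d} r_0^d.
\end{equation*}
The crucial input here is the hypothesis $\ys_\iii/\as_\iii \le K$, which converts the $\iii$-dependent density bound into a uniform one.

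\textbf{Step 3: Disjointness and counting.} Because $A$ is an incomparable set, the OSC makes the sets $\fii_\iii(\text{int}(X))$, $\iii \in A$, pairwise disjoint, so the pieces $B(x, r_0) \cap \fii_\iii(\text{int}(X))$ are pairwise disjoint inside $B(x, r_0)$. Summing the lower bound from Step 2,
\begin{equation*}
  |A| \cdot c_1 K^{-d} r_0^d \le \HH^d\bigl( B(x, r_0) \bigr) = c_d r_0^d,
\end{equation*}
so $|A| \le c_d K^d / c_1$. The bound is independent of $r_0$, of $x$, and of the finite antichain $A$; hence it holds for every (possibly infinite) incomparable subset of $R(x)$, which is precisely weak bounded overlapping.

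The main obstacle is the apparent scale-mismatch in Step 2: the symbols in $A$ can live at wildly different scales $\ys_\iii$, yet the density estimate must be uniform across them. The boundedness of $\ys_\iii/\as_\iii$ is exactly what reconciles this, converting the natural $(\as_\iii/\ys_\iii)^d$ factor into a uniform constant. The other subtle point is the boundary condition, which is needed precisely because the boundary point $y_\iii$ may lie on $\partial X$, where $\text{int}(X)$ occupies only a fraction of the small ball $B(y_\iii, r_0/\ys_\iii)$; Step 1 ensures this fraction is uniformly positive.
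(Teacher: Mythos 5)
Your proof is correct and follows essentially the same route as the paper's: pull back small balls at the preimages $y_\iii$ of $x$, use the boundary condition for a uniform interior-density estimate, push forward with the lower Lipschitz bound, invoke the OSC for disjointness of the sets $\fii_\iii(\text{int}(X))$, and use the boundedness of $\ys_\iii/\as_\iii$ to reconcile the different scales. The only cosmetic difference is that you normalise the pulled-back radii by $\ys_\iii$ so that all images sit inside a single ball $B(x,r_0)$, whereas the paper normalises by $\as_\iii$ and absorbs the ratio bound into the radius of the containing ball $B(x,Cr)$.
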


\begin{proof}
  Fix $x \in X$ and denote with $R$ some incomparable subset of $\{
  \iii \in I^* : x \in \fii_\iii(X) \}$. Put $r_0 = \min\{
  \roo_0,d(X,\partial \Omega) \}$, where $\roo_0$ is as in the
  boundary condition. Now there exists
  $\delta > 0$ such that for every $y \in X$ we have
  \begin{equation}
    \HH^d\bigl( B(y,r) \cap \text{int}(X) \bigr) \ge \HH^d\bigl(
    B(y,\delta r) \bigr)
  \end{equation}
  whenever $0<r<r_0$. Note that the collection $\{ \fii_\iii\bigl(
  \text{int}(X) \bigr) : \iii \in R \}$ is disjoint due to the
  OSC. For each $\iii \in R$ take $y_\iii \in X$ such that
  $\fii_\iii(y_\iii) = x$ and choose an increasing sequence of finite
  sets $R_1 \subset R_2 \subset \cdots$ such that
  $\bigcup_{j=1}^\infty R_j = R$. Now fix $j$ and choose $r>0$ small
  enough such that $r_\iii := r/\as_\iii < r_0$ for all $\iii \in
  R_j$. Using now the boundary condition, bi--Lipschitzness and the
  OSC, we see that
  \begin{align}
    \# R_j r^d &= \sum_{\iii \in R_j} \as_\iii^d r_\iii^d \notag \\
    &= \bigl( \alpha(d)\delta^d \bigr)^{-1} \sum_{\iii \in R_j}
    \as_\iii^d \HH^d\bigl( B(y_\iii,\delta r_\iii) \bigr) \notag \\
    &\le \bigl( \alpha(d)\delta^d \bigr)^{-1} \sum_{\iii \in R_j}
    \as_\iii^d \HH^d\bigl( B(y_\iii,r_\iii) \cap \text{int}(X)
    \bigr) \notag \\
    &\le \bigl( \alpha(d)\delta^d \bigr)^{-1} \sum_{\iii \in R_j}
    \HH^d\Bigl( \fii_\iii\bigl( B(y_\iii,r_\iii) \cap
    \text{int}(X) \bigr) \Bigr) \\
    &\le \bigl( \alpha(d)\delta^d \bigr)^{-1} \HH^d \Biggl(
    \bigcup_{\iii \in R_j} B(x,\ys_\iii r_\iii) \Biggr) \notag \\
    &\le \delta^{-d} C^d r^d, \notag
  \end{align}
  where $\alpha(d)$ is the Hausdorff measure of the unit ball and
  $\ys_\iii / \as_\iii \le C$ as $\iii \in I^*$. Hence $\# R = \lim_{j
  \to \infty} \# R_j \le \delta^{-d} C^d$, where the upper bound does
  not depend on the choice of $x \in X$.
\end{proof}

Now we define an important class of iterated function systems.
We say that a general IFS is \emph{(weakly) geometrically stable} if it is
bi--Lipschitz and it has (weak) bounded overlapping.
Geometrically stable systems are clearly weakly geometrically stable
by Lemma \ref{thm:stable_a}. If we have a good control over the size
of the cylinder sets, the converse is also true.

\begin{proposition} \label{thm:ifssgs}
  Suppose a general IFS is weakly geometrically stable such that
  $\ys_\iii / \as_\iii$ is bounded as $\iii \in I^*$. Then it is also
  geometrically stable.
\end{proposition}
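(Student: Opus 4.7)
The plan is to bound $\# Z(x,r)$ via a volume–packing argument in $\R^d$, combining the weak bounded overlapping with bi--Lipschitz volume distortion and the hypothesis $\ys_\iii/\as_\iii \le C$. The rough idea is that although weak bounded overlapping only controls multiplicities at single points, the boundedness of $\ys_\iii/\as_\iii$ forces every $\fii_\iii(X)$ with $\iii \in Z(x,r)$ to be squeezed into an only mildly enlarged ball $B(x,r')$, and this converts point-wise multiplicity into a count of cylinders.

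First I would enlarge the target ball. For $\iii \in Z(x,r)$ we have $\fii_\iii(X) \cap B(x,r) \ne \emptyset$ and $d(\fii_\iii(X)) \le \ys_\iii\, d(X) \le C\as_\iii\, d(X) < Cr\, d(X)$, so $\fii_\iii(X) \subset B(x, r')$ with $r' = r(1+Cd(X))$. Since $Z(r)$ is incomparable, so is $Z(x,r)$, and therefore for every $y \in X$ the set $\{\iii \in Z(x,r) : y \in \fii_\iii(X)\}$ is an incomparable subset of $\{\iii \in I^* : y \in \fii_\iii(X)\}$. By weak bounded overlapping its cardinality is at most some $N$ that does not depend on $y$. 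Integrating the associated sum of indicator functions against $\HH^d$ over $B(x,r')$ gives
\begin{equation*}
  \sum_{\iii \in Z(x,r)} \HH^d\bigl(\fii_\iii(X)\bigr) \le N\, \HH^d\bigl( B(x,r') \bigr) = N\alpha(d)\, r'^d.
\end{equation*}

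Next I would lower-bound each summand in two stages. Bi--Lipschitzness of $\fii_\iii$ with lower constant $\as_\iii$ makes $\fii_\iii^{-1}$ a $(1/\as_\iii)$--Lipschitz map from $\fii_\iii(X)$ onto $X$, so $\HH^d(\fii_\iii(X)) \ge \as_\iii^d \HH^d(X) =: c_X \as_\iii^d$ with $c_X > 0$ because $X$ has nonempty interior. To bound $\as_\iii$ from below in terms of $r$, I would use that $\iii \in Z(r)$ means $r \le \as_{\iii|_{|\iii|-1}}$, together with the chain rule for $\apsi$ and the estimate $\apsi_i^1(\hhh) \ge \as_1 > 0$ for single letters $i \in I$, which follows from axiom (3) of the cylinder function applied at $t=0,\ \delta=1$. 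Writing the chain rule at $j = |\iii|-1$ and invoking the BVP constant $\underline{K}_1$ for $\apsi^1$,
\begin{equation*}
  \underline{K}_1 \as_\iii \ge \apsi_\iii^1(\hhh) = \apsi_{\iii|_{|\iii|-1}}^1\bigl(\sigma^{|\iii|-1}(\iii),\hhh\bigr)\, \apsi_{i_{|\iii|}}^1(\hhh) \ge \as_{\iii|_{|\iii|-1}} \as_1 \ge r\,\as_1,
\end{equation*}
so $\as_\iii \ge (\as_1/\underline{K}_1)\, r$. Substituting this and $\HH^d(\fii_\iii(X)) \ge c_X \as_\iii^d$ into the integrated inequality yields a bound on $\# Z(x,r)$ that depends only on $C$, $c_X$, $d(X)$, $\as_1$, $\underline{K}_1$, $N$, and $d$, proving bounded overlapping.

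The main obstacle I anticipate is the final $r$-versus-$\as_\iii$ comparison: in a \emph{general} IFS the $\fii_\iii$ need not factor through one-step maps $\fii_i$, so the lower bound on $\as_\iii$ cannot be obtained by a direct product decomposition and must instead be extracted from the chain rule together with the BVP on $\apsi^1$, as above. Once this step is in hand, the volume-packing argument is routine.
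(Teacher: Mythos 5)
Your argument is correct and is essentially the paper's own proof: both enlarge $B(x,r)$ to a ball of radius comparable to $r$ using $\ys_\iii/\as_\iii \le C$ together with $\as_\iii < r$, bound $\sum_{\iii \in Z(x,r)} \HH^d\bigl(\fii_\iii(X)\bigr)$ by a constant times $\HH^d\bigl(B(x,Cr)\bigr)$ via the weak bounded overlapping applied to the incomparable set $Z(x,r)$, and lower-bound each $\HH^d\bigl(\fii_\iii(X)\bigr)$ by $\as_\iii^d\HH^d(X)$ with $\as_\iii$ comparable to $r$. The only difference is that you make explicit, via the chain rule and the BVP constant $\underline{K}_1$ for $\apsi^1_\iii$, the step $\as_\iii \ge (\as_1/\underline{K}_1)\,r$, which the paper uses silently in the inequality $\# Z(x,r)\, r^d \le \bigl(\min_{i \in I}\as_i^d\bigr)^{-1}\sum_{\iii \in Z(x,r)}\as_\iii^d$; this is a welcome clarification, not a deviation.
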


\begin{proof}
  Notice first that the weak bounded overlapping assumption implies the
  existence of the constant $C$ for which
  $\sum_{\iii \in A} \khii_{\fii_\iii(X)}(x) < C$ whenever $x
  \in X$ and the set $A \subset I^*$ is incomparable. Recall that
  \begin{equation}
    Z(x,r) = \{ \iii \in Z(r) : \fii_\iii(X) \cap B(x,r) \ne \emptyset \}
  \end{equation}
  is incomparable and notice that $\fii_\iii(X) \subset
  B(x,rd(X)\ys_\iii / \as_\iii + r)$ as $\iii \in Z(x,r)$. Choosing
  $C$ big enough such that also $d(X)\ys_\iii / \as_\iii + 1 \le C$ whenever
  $\iii \in I^*$, we get
  \begin{align}
    \# Z(x,r) r^d &\le \Bigl(\min_{i \in I}\as_i^d\Bigr)^{-1} \sum_{\iii \in
    Z(x,r)} \as_\iii^d \notag \\
    &\le \Bigl( \HH^d(X) \min_{i \in I}\as_i^d \Bigr)^{-1} \sum_{\iii
    \in Z(x,r)} \HH^d\bigl( \fii_\iii(X) \bigr) \\
    &\le \Bigl( \HH^d(X) \min_{i \in I}\as_i^d \Bigr)^{-1}
    \int_{B(x,Cr)} \sum_{\iii \in Z(x,r)} \khii_{\fii_\iii(X)}(x)
    d\HH^d(x). \notag
  \end{align}
  Since $r^d = \bigl( \alpha(d) C^d \bigr)^{-1} \HH^d\bigl( B(x,Cr)
  \bigr)$, we conclude
  \begin{equation} \label{eq:boundeddi}
    \# Z(x,r) \le \frac{\alpha(d)C^{d+1}}{\HH^d(X)\min_{i \in I} \as_i^d},
  \end{equation}
  where $\alpha(d)$ is the Hausdorff measure of the unit ball.
\end{proof}

Before studying the Hausdorff dimension of the limit set, we show in
the following theorem that with respect to any invariant measure we
can have the same structure in the limit set as in the symbol space.
Under the weak bounded overlapping assumption, somehow the weakest
separation condition, we can project any invariant measure from
$I^\infty$ to the limit set $E$ such that the overlapping has measure
zero.

\begin{theorem} \label{thm:leikkausmittanolla}
  Suppose a general IFS has weak bounded overlapping. Then for $m = \mu
  \pallo \pi^{-1}$, where $\mu \in \MM_\sigma(I^\infty)$, we have
  \begin{equation}
    m\bigl( \fii_\iii(X) \cap \fii_\jjj(X) \bigr) = 0
  \end{equation}
  whenever $\iii$ and $\jjj$ are incomparable.
\end{theorem}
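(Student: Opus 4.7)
The plan is to prove $\mu(\pi^{-1}(A)) = 0$ for $A := \fii_\iii(X)\cap\fii_\jjj(X)$ via an exhaustion argument in the symbol space coupled with the weak bounded overlap hypothesis and shift invariance of $\mu$. First I approximate $\pi^{-1}(A)$ from above by cylinder unions: setting $N = \max\{|\iii|,|\jjj|\}$ and, for $n\ge N$,
\[
  W_n = \{\kkk \in I^n : \fii_\kkk(X)\cap A\ne\emptyset\},\qquad U_n = \bigcup_{\kkk\in W_n}[\kkk],
\]
property (1) of the collection $\{X_\kkk\}$ gives $U_{n+1}\subset U_n$, and property (2) together with the compactness of $A$ yields $\bigcap_n U_n = \pi^{-1}(A)$. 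Since the level-$n$ cylinders are disjoint, this forces $\mu(\pi^{-1}(A)) = \lim_{n\to\infty}\sum_{\kkk\in W_n}\mu([\kkk])$, and the whole task is to show this limit is zero.

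Next I decompose $W_n = W_n^\iii\sqcup W_n^\jjj\sqcup W_n^\circ$ according to whether $\kkk$ extends $\iii$, extends $\jjj$, or is incomparable with both. For any $\kkk\in W_n^\circ$, picking $x_\kkk\in\fii_\kkk(X)\cap A$ exhibits $\{\iii,\jjj,\kkk\}$ as an incomparable triple inside $\{\mathbf{s}:x_\kkk\in\fii_\mathbf{s}(X)\}$, so weak bounded overlap controls its cardinality uniformly. Integrating the pointwise multiplicity bound $\sum_{\kkk\in I^n}\khii_{\fii_\kkk(X)}(x)\le C$ against $m$, and using the inclusion $[\kkk]\subset\pi^{-1}(\fii_\kkk(X))$ (which yields $\mu([\kkk])\le m(\fii_\kkk(X))$), leads to an estimate of the form $\sum_{\kkk\in W_n^\circ}\mu([\kkk])\le C\,m(A)$.

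For $W_n^\iii$ I invoke shift invariance. Each $\kkk\in W_n^\iii$ can be written $\kkk=\iii\mathbf{s}$ with $\mathbf{s}\in I^{n-|\iii|}$ satisfying $\fii_\mathbf{s}(X)\cap\fii_\iii^{-1}(\fii_\jjj(X))\ne\emptyset$; applying the same covering construction inside $[\iii]$, using the $\sigma$-invariance of $\mu$ to transport the estimate back from $\sigma^{|\iii|}([\iii])$ to $I^\infty$, and recursing, drives $\sum_{\kkk\in W_n^\iii}\mu([\kkk])\to 0$. The analogous argument handles $W_n^\jjj$. Combining the three contributions and letting $n\to\infty$ gives the desired vanishing.

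\textbf{Main obstacle.} The critical step is turning the pointwise cardinality bound supplied by weak bounded overlap into a decisive measure-theoretic decay, since on its own the inequality $\mu(\pi^{-1}(A))\le C\,m(A)=C\mu(\pi^{-1}(A))$ is self-referential and insufficient. I expect the decisive ingredient to be that level-$n$ cylinder diameters shrink uniformly, so iterating the covering argument inside $[\iii]$ and $[\jjj]$ with shift invariance strictly depletes the residual measure at each step and ultimately forces the limit to be zero.
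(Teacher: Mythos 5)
Your proposal contains a genuine gap, and you have in fact located it yourself in your ``main obstacle'' paragraph: the covering/exhaustion strategy only ever produces the self-referential inequality $\mu\bigl(\pi^{-1}(A)\bigr) \le C\,m(A) = C\,\mu\bigl(\pi^{-1}(A)\bigr)$, and the proposed escape route does not work. The recursion ``inside $[\iii]$'' asks you to control $\sum_{\mathbf{s}}\mu([\iii,\mathbf{s}])$ over the surviving words $\mathbf{s}$, but shift invariance gives you no access to the measure of cylinders with a \emph{fixed prefix}: invariance says $\mu(B)=\mu\bigl(\sigma^{-1}(B)\bigr)=\sum_{i\in I}\mu([i;B])$, a sum over \emph{all} one-step preimages, and it does not relate $\mu([\iii;B])$ to $\mu(B)$ unless $\mu$ has some Gibbs or conformal structure, which an arbitrary $\mu\in\MM_\sigma(I^\infty)$ does not. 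Nor do shrinking cylinder diameters deplete anything: they only upgrade the multiplicity bound to the limit statement $\mu(\pi^{-1}(A))\le C\,m(\overline{A})=C\,m(A)$, which is the same circular estimate. So no step of your argument actually forces the limit to be zero.

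The paper's proof runs in the opposite direction and this is the missing idea. Instead of covering $\pi^{-1}(A)$ by cylinders, push $A$ \emph{forward}: set $A_n=\bigcup_{\kkk\in I^n}\fii_\kkk(A)$. Invariance is then used globally, where it is actually available: since $\fii_\kkk(A)\supset\pi\bigl([\kkk;\pi^{-1}(A)]\bigr)$, the multiplicity bound from weak bounded overlapping gives
\begin{equation*}
  m(A_n)\ \ge\ C^{-1}\sum_{\kkk\in I^n}\mu\bigl([\kkk;\pi^{-1}(A)]\bigr)
        \ =\ C^{-1}\,\mu\bigl(\sigma^{-n}(\pi^{-1}(A))\bigr)\ =\ C^{-1}m(A)
\end{equation*}
for every $n$. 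The second, purely combinatorial ingredient is that $\bigcap_{q}\bigcup_{n\ge q}A_n=\emptyset$: a point $x$ lying in $A_{n_q}$ for infinitely many $q$ would carry, for each $q$, two incomparable symbols $\jjj_q,\iii$ and $\jjj_q,\jjj$ with $x\in\fii_{\jjj_q,\iii}(X)\cap\fii_{\jjj_q,\jjj}(X)$, and accumulating these over $q$ produces arbitrarily large incomparable subsets of $\{\mathbf{s}:x\in\fii_{\mathbf{s}}(X)\}$, contradicting weak bounded overlapping. Combining the two facts, $0=m\bigl(\bigcap_q\bigcup_{n\ge q}A_n\bigr)\ge\limsup_q m(A_q)\ge C^{-1}m(A)$, whence $m(A)=0$. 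Your decomposition of $W_n$ and your use of the multiplicity bound are sound as far as they go, but without this forward iteration and the emptiness of the limsup set the argument cannot close.
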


\begin{proof}
  We use the idea found in the proof of Lemma 3.10 of Mauldin and
  Urba\'nski \cite{mu}.
  For fixed incomparable $\hhh$ and $\kkk$ we denote $A = \fii_\hhh(X)
  \cap \fii_\kkk(X)$ and
  $A_n = \bigcup_{\iii \in I^n} \fii_\iii(A)$ as $n \in \N$. Let us
  first show that $\bigcap_{q=1}^\infty \bigcup_{n=q}^\infty A_n =
  \emptyset$. Assume contrarily that there exists $x \in
  \bigcap_{q=1}^\infty \bigcup_{n=q}^\infty A_n$. Then $x \in
  \bigcup_{n=q}^\infty A_n$ for every $q$ and hence $x \in A_{n_q}$,
  where $\{ n_q \}_{q \in \N}$ is an increasing sequence of indexes. Now
  for each $q$ there exists a symbol $\jjj_q \in I^{n_q}$ such that $x
  \in \fii_{\jjj_q,\hhh}(X)$ and $x \in \fii_{\jjj_q,\kkk}(X)$. Denoting with
  $R_k^*$ the maximal incomparable subset of $R_k = \{ \iii \in
  \bigcup_{q=1}^k (I^{n_q+|\hhh|} \cup I^{n_q+|\kkk|}) : x \in
  \fii_\iii(X) \}$, we have $\# R_1 \ge
  2$ and also $\# R_1^* \ge 2$. Clearly, $\# R_2 \ge 4$, and even if it were
  $\jjj_2|_{n_1+|\hhh|} = \jjj_1,\hhh$ (or $\jjj_2|_{n_1+|\kkk|} =
  \jjj_1,\kkk$), it is still $\# R_2^* \ge 3$ since the two new
  symbols $\jjj_2,\hhh$ and $\jjj_2,\kkk$ with the symbol
  $\jjj_1,\kkk$ (or $\jjj_1,\hhh$) are incomparable. Observe that for
  each $k$ the symbol $\jjj_k$ can be comparable at maximum with one
  element of $R^*_{k-1}$. Thus continuing in this manner, we get $\# R_k^*
  \ge k+1$ as $k \in \N$. The claim is proved since this contradicts
  the bounded overlapping assumption.

  The boundedness assumption also implies $\sum_{\iii \in I^n}
  \khii_{\fii_\iii(A)}(x) \le C$ for every $x \in X$ and $n \in \N$
  with some constant $C \ge 0$. Thus, using the invariance of $\mu$, we
  have
  \begin{align}
    m(A_n) &= m\Biggl( \bigcup_{\iii \in I^n} \fii_\iii(A) \Biggr) \ge
    C^{-1}\sum_{\iii \in I^n} m\bigl( \fii_\iii(A) \bigr) \notag \\
    &\ge C^{-1}\sum_{\iii \in I^n} \mu\bigl( [\iii;\pi^{-1}(A)] \bigr)
    = C^{-1}\mu \pallo \sigma^{-n}\bigl( \pi^{-1}(A) \bigr) = C^{-1}m(A)
  \end{align}
  whenever $n \in \N$. So, if $m(A) > 0$, we get a contradiction
  immediately since
  \begin{equation}
    m\Biggl( \bigcap_{q=1}^\infty \bigcup_{n=q}^\infty A_n \Biggr) =
    \lim_{q \to \infty} m\Biggl( \bigcup_{n=q}^\infty A_n \Biggr) \ge
    \lim_{q \to \infty} m(A_q) \ge C^{-1}m(A).
  \end{equation}
  The proof is complete.
\end{proof}

If we assume that the cylinder function satisfy
\begin{equation} \label{eq:bicylinder}
  \apsi_\iii^t(\hhh) \le \psi_\iii^t(\hhh) \le \ypsi_\iii^t(\hhh),
\end{equation}
where $\iii \in I^*$, then we clearly have
$\dimeala(I^\infty) \le \dime(I^\infty) \le \dimeyla(I^\infty)$, where
$\dimeala(I^\infty)$ and $\dimeyla(I^\infty)$ are the equilibrium dimensions
derived from cylinder functions $\apsi_\iii^t$ and $\ypsi_\iii^t$,
respectively. The following theorem guarantees that the similar behaviour
occurs also for the Hausdorff dimension with geometrically stable
systems. It is now very tempting to guess that in some cases making a
reasonable choice for the cylinder function, it is possible to get $\dimh(E) =
\dime(I^\infty)$. If there is no danger of misunderstanding, we call
also the projected equilibrium measure an equilibrium measure. 

\begin{theorem} \label{thm:stabledim}
  Suppose a general IFS is geometrically stable. Then it has
  \begin{equation} \label{eq:dimearvio}
    \dimeala(I^\infty) \le \dimh(E) \le \dimeyla(I^\infty),
  \end{equation}
  and, in fact, $\HH^t(A) > 0$ as $t \le \dimeala(I^\infty)$ whenever $A$
  is a Borel set such that $\underline{m}(A) = 1$ and
  $\underline{m}$ is the equilibrium measure constructed using the
  cylinder function $\apsi_\iii^t$.
\end{theorem}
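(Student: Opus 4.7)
The plan is to treat the two inequalities in \eqref{eq:dimearvio} separately, and then to upgrade the lower one to the claimed $\HH^{t_0}$-positivity through a mass distribution argument routed through the conformal measure.

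For the upper bound $\dimh(E) \le \dimeyla(I^\infty)$ I would use a direct cover argument. Given $t > \dimeyla(I^\infty)$ we have $\overline{\GG}^t(I^\infty)=0$, so for every $\eps>0$ and large $n$ there is a cover $\{[\iii_j]\}$ of $I^\infty$ with $|\iii_j|\ge n$ and $\sum_j \int \ypsi_{\iii_j}^t(\hhh)\,d\overline{\mu}_t(\hhh)<\eps$, where $\overline{\mu}_t$ is a $t$-equilibrium measure for $\ypsi$. The images $\fii_{\iii_j}(X)$ then cover $E$ with diameters at most $\ys_{\iii_j}\,d(X)$. Because $\ypsi$ is an exponent cylinder function, $\ys_{\iii_j}^t=\sup_\hhh \ypsi_{\iii_j}^t(\hhh)$, and the BVP combined with the fact that $\overline{\mu}_t$ is a probability measure gives $\ys_{\iii_j}^t \le K\int \ypsi_{\iii_j}^t\,d\overline{\mu}_t$ for $K$ the BVP constant of $\ypsi^t$. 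Summing and letting $n\to\infty$, so that $\max_j \ys_{\iii_j}\to 0$ by condition (3) of the cylinder function, yields $\HH^t(E)=0$.

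For the lower bound, set $t_0=\dimeala(I^\infty)$, so that $P(t_0)=0$ by Theorem \ref{thm:eqdim}. Since $\apsi$ satisfies the chain rule, Theorem \ref{thm:aakolme} delivers a $t_0$-conformal measure $\underline{\nu}$ with $\underline{\nu}([\jjj])=\int \apsi_\jjj^{t_0}(\hhh)\,d\underline{\nu}(\hhh)$, and Theorem \ref{thm:equivalent} relates it to the equilibrium measure $\underline{\mu}$. The BVP applied to $\apsi^1$ gives $\apsi_\jjj^1(\hhh)\le K' \as_\jjj$ (by taking infimum over $\hhh$), and the exponent property promotes this to the pointwise bound $\apsi_\jjj^{t_0}(\hhh)\le (K')^{t_0}\as_\jjj^{t_0}$. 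Integrating against $\underline{\nu}$ gives $\underline{\nu}([\jjj])\le (K')^{t_0}\as_\jjj^{t_0}$, and consequently $\underline{\mu}([\jjj])\le C\as_\jjj^{t_0}$ for a constant $C$ independent of $\jjj$.

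To convert this into a local bound on $\underline{m}=\underline{\mu}\pallo\pi^{-1}$, fix $x\in E$ and $r<r_0(x)$. The bounded overlapping gives $\#Z(x,r)\le M$; combining $B(x,r)\cap E \subset \bigcup_{\jjj \in Z(x,r)} \fii_\jjj(X)$ with the identity $\underline{m}(\fii_\jjj(X))=\underline{\mu}([\jjj])$ (a consequence of Theorem \ref{thm:leikkausmittanolla}, applicable via Lemma \ref{thm:stable_a}, since the overlaps of cylinders in $I^{|\jjj|}$ have $\underline{m}$-measure zero) and $\as_\jjj<r$ yields $\underline{m}(B(x,r))\le MC r^{t_0}$. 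Applying the mass distribution principle on the exhausting sequence $A_k=\{x\in E: r_0(x)\ge 1/k\}$, used to neutralise the $x$-dependence of $r_0$, produces $\HH^{t_0}(A\cap A_k)\ge c\,\underline{m}(A\cap A_k)$ for every Borel $A$; letting $k\to\infty$ shows $\HH^{t_0}(A)>0$ whenever $\underline{m}(A)=1$, and the case $t<t_0$ follows from $\HH^t\ge\HH^{t_0}$ on covers of diameter less than one. The main obstacle I anticipate is producing the genuinely pointwise bound $\underline{\mu}([\jjj])\le C\as_\jjj^{t_0}$ rather than a merely subexponential Shannon--McMillan / Kingman comparison; routing through the conformal measure, available precisely because $\apsi$ satisfies the chain rule, is what upgrades the conclusion from $\dimh(A)\ge t_0$ to positive $t_0$-Hausdorff measure.
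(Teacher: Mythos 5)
Your proposal is correct and follows essentially the same route as the paper: the upper bound via covers by the sets $\fii_{\iii_j}(X)$ with diameters controlled by $\ys_{\iii_j}$ and the BVP linking $\ys_{\iii_j}^t$ to $\overline{\GG}^t$, and the lower bound via the local estimate $\underline{m}\bigl(B(x,r)\bigr)\le C\,\#Z(x,r)\,r^{t_0}$ obtained from the conformal measure and bounded overlapping, followed by the mass distribution principle on the exhaustion $A_k=\{x: r_0(x)>1/k\}$. Your explicit appeal to Theorem \ref{thm:leikkausmittanolla} to justify $\underline{m}\bigl(\fii_\jjj(X)\bigr)=\underline{\mu}([\jjj])$ is a point the paper leaves implicit, and is a welcome clarification rather than a deviation.
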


\begin{proof}
  Let us first prove the right--hand side of (\ref{eq:dimearvio}). For
  each $t \ge 0$ we have
  \begin{align}
    \HH^t(E) &\le \lim_{n \to \infty}\inf \biggl\{ \sum_j d\bigl(
    \fii_{\iii_j}(E) \bigr)^t : E \subset \bigcup_j \fii_{\iii_j}(E),\;
    |\iii_j| \ge n \biggr\} \notag \\
    &\le \lim_{n \to \infty}\inf \biggl\{ \sum_j d(E)^t\ys_{\iii_j}^t :
    E \subset \bigcup_j \fii_{\iii_j}(E),\; |\iii_j| \ge n \biggr\} \\
    &\le \overline{K}_t d(E)^t \overline{\GG}^t(I^\infty), \notag
  \end{align}
  where $\overline{\GG}^t$ is the measure constructed in a similar way
  as in (\ref{eq:falconer_measure1}) and (\ref{eq:falconer_measure2})
  but using the cylinder function $\ypsi_\iii^t$. Here
  $\overline{K}_t$ is the constant of the BVP. Thus $\dimh(E) \le
  \dimeyla(I^\infty)$. Notice that here we did not need any kind of
  separation condition.

  For the left--hand side recall first that the set $Z(r)$
  is incomparable and the cardinality of the set $Z(x,r)$
  is bounded as $x \in X$ and $0<r<r_0=r_0(x)$. Now for fixed $x \in
  X$ and $0<r<r_0(x)$ we have, using 
  theorems \ref{thm:aakolme}, \ref{thm:equivalent} and \ref{thm:eqdim},
  \begin{align}
    \underline{m}\bigl( B(x,r) \bigr) &\le \sum_{\iii \in Z(x,r)}
    \underline{m}\bigl( \fii_\iii(X) \bigr) \notag \\
    &\le \underline{K}_t \sum_{\iii \in Z(x,r)} \int_{I^\infty}
    \apsi_\iii^t(\hhh) d\underline{\nu}(\hhh) \le \underline{K}_t^2 \#
    Z(x,r) r^t,
  \end{align}
  where $\underline{m}$ and $\underline{\nu}$ are the corresponding
  equilibrium measure and conformal measure constructed using the
  cylinder function $\apsi_\iii^t$ and $t =
  \dimeala(I^\infty)$. Taking $A \subset E$ such that
  $\underline{m}(A) = 1$ and defining $A_k = \{ x \in A : \tfrac{1}{k}
  < r_0(x) \}$, we have $A = \bigcup_{k=1}^\infty A_k$. Now for each $x
  \in A_k$ we have
  \begin{equation}
    \frac{\underline{m}\bigl( B(x,r) \bigr)}{r^t} \le \underline{K}_t^2
    \# Z(x,r)
  \end{equation}
  as $0<r<\tfrac{1}{k}$, and thus $\HH^t(A_k) \ge
  C\underline{m}(A_k)$ for some positive constant $C$. Since $\HH^t(A)
  = \lim_{k \to \infty} \HH^t(A_k) \ge C > 0$, we have finished the
  proof.
\end{proof}

Next we introduce a couple of examples of IFS's which have aroused
great interest for some time. After each definition we also discuss a
little how our theory turns out to be in that particular case. Our
main application is the self--affine case described below.

\begin{definition}
  Let the mappings of IFS be \emph{similitudes}, that is, for each $i \in
  I$ there exists $0<s_i<1$ such that $|\fii_i(x)-\fii_i(y)| =
  s_i|x-y|$ whenever $x,y \in \Omega$. We call this kind of setting a
  \emph{similitude IFS} and the corresponding limit set a
  \emph{self--similar set}.
\end{definition}

If for each $\iii \in I^*$ we choose
$\psi_\iii^t \equiv s_\iii^t$, where $s_\iii = s_{i_1}\cdots
s_{i_{|\iii|}}$, then $\psi_\iii^t$ is a constant cylinder function
satisfying the chain rule. Assuming weak bounded overlapping, the
similitude IFS is geometrically stable due to Proposition
\ref{thm:ifssgs}, and, thus, with this choice of the cylinder function
we get, applying Theorem \ref{thm:stabledim}, that $\dimh(E) =
\dime(I^\infty)$ (we clearly have $\as_\iii = \ys_\iii = s_\iii$).
Notice also that Theorem \ref{thm:blifsoscbc} provides us with
concrete assumptions, namely the OSC and the boundary condition, to obtain the
weak bounded overlapping. The definition of
this setting goes back to the well known article of Hutchinson
\cite{hu}. However, the open set condition was first introduced by
Moran in \cite{mo}. Schief studied in \cite{sc}, extending ideas of
Bandt and Graf \cite{bg}, the relationship between the OSC and the
choice of the mappings of IFS. It also follows from the result of
Schief that the weak bounded overlapping implies the OSC since
according to Proposition \ref{thm:ifssgs} and Theorem
\ref{thm:stabledim} we have $\HH^t(E) > 0$, where $t=\dimh(E)$.
For example, using Theorems \ref{thm:aakolme},
\ref{thm:equivalent} and \ref{thm:leikkausmittanolla}, we see that
the $t$--equilibrium measure, where $t = \dimh(E)$, gives us the idea
of ``mass distribution''; we start with mass $1$ and on each level of
the construction we divide the mass from cylinder sets of the
previous level using the rule obtained by the probability vector
$(s_i^t)_{i \in I}$.

\begin{definition}
  Suppose $d \ge 2$. Let mappings of IFS be $C^1$ and \emph{conformal}
  on an open set $\Omega_0 \supset \overline{\Omega}$. Hence $|\fii_i'|^d =
  |J_{\fii_i}|$ for every $i \in I$, where $J$ stands for the usual
  Jacobian and the norm on the left--hand side is just a standard
  ``sup--norm'' for linear mappings. We call this kind of setting a
  \emph{conformal IFS} and the corresponding limit set a
  \emph{self--conformal set}. 
\end{definition}

Observe that the conformal mapping is
complex analytic in the plane and, by Liouville's theorem, a M\"obius
transformation in higher dimensions (see Theorem 4.1 of Reshetnyak
\cite{re}). So, in fact, conformal mappings are $C^\infty$ and
infinitesimally similitudes. Notice also that it is essential to use
the bounded set $\Omega$ here since conformal mappings contractive
in the whole $\R^d$ are similitudes. If for each
$\iii \in I^*$ we choose $\psi_\iii^t(\hhh) = \bigl| \fii_\iii'\bigl(
\pi(\hhh) \bigr) \bigr|^t$, then $\psi_\iii^t$ is a cylinder function
satisfying the chain rule. The BVP for $\psi_\iii^t$ is guaranteed by
the smoothness of mappings $\fii_\iii$, Proposition
\ref{thm:kakspisteyks} and the chain rule. With this choice of the cylinder
function we may also call the BVP a \emph{bounded distortion property
(BDP)} since it gives information about the distortion of mappings
$\fii_\iii$. Assuming weak bounded overlapping, the system is
geometrically stable and we get $\dimh(E) = \dime(I^\infty)$ like
before (we can choose $\apsi_\iii^t = \ypsi_\iii^t = \psi_\iii^t$).
Notice again that, using Theorem \ref{thm:blifsoscbc}, the OSC
and the bounded overlapping provides us with a sufficient condition for the
weak bounded overlapping to hold.
In the conformal case the equilibrium measure is equivalent to the conformal
measure. Peres, Rams, Simon and Solomyak \cite{pr} generalised the
result of Schief for the conformal setting. Thus, the weak bounded
overlapping implies the OSC also in this setting. Mauldin and
Urba\'nski \cite{mu} have introduced the theory of conformal IFS's for
infinite collections of mappings.

\begin{definition}
  Let the mappings of IFS be \emph{affine}, that is, $\fii_i(x) = A_ix+a_i$ for
  every $i \in I$, where $A_i$ is a contractive non--singular linear
  mapping and $a_i \in \R^d$.
  We call this kind of setting an \emph{affine IFS} and the
  corresponding limit set a \emph{self--affine set}.
\end{definition}

Clearly, the products $A_\iii =
A_{i_1}\cdots A_{i_{|\iii|}}$ are also contractive and
non--singular. Singular values of a non--singular matrix are the
lengths of the principle semiaxes of the image of the unit ball. On
the other hand, the singular values $1 > \alpha_1 \ge \alpha_2 \ge
\cdots \ge \alpha_d > 0$ of a contractive, non--singular matrix $A$
are the non--negative square roots of the eigenvalues of $A^*A$,
where $A^*$ is the transpose of $A$. Define the singular value function
$\alpha^t$ by setting $\alpha^t(A) =
\alpha_1\alpha_2\cdots\alpha_{l-1}\alpha_l^{t-l+1}$, where $l$ is
the smallest integer greater than or equal to $t$. For all $t>d$ we put
$\alpha^t(A) = (\alpha_1\cdots\alpha_d)^{t/d}$. It is clear that
$\alpha^t(A)$ is continuous and strictly decreasing in $t$. If for each
$\iii \in I^*$ we choose $\psi_\iii^t \equiv \alpha^t(A_\iii)$, then
$\psi_\iii^t$ is a constant cylinder function. The subchain rule for
$\psi_\iii^t$ is satisfied by Lemma 2.1 of Falconer \cite{fa1}.
Since in this case we do not have the chain rule, it is still very difficult
to say anything ``concrete'' about the equilibrium measure or the
Hausdorff dimension of the limit set.
Assuming the SSC, we have bounded overlapping satisfied by
Lemma \ref{thm:blifsssc} and thus we can at least approximate the
Hausdorff dimension of the limit set by using Theorem \ref{thm:stabledim}.
We study self--affine sets and equilibrium measures of affine
IFS's in more detail in the next chapter.
The following example shows us that in the affine setting we cannot
allow overlapping even at one single point if we want to have the
weak bounded overlapping.

\begin{example}
  Put $I = \{ 1,2 \}$, $X = \overline{B(0,1)} \cap \{ (x_1,x_2) \in
  \R^2 : |x_2| \le x_1 \}$ and define two affine mappings (in matrix
  notation) as follows:
  \begin{align}
    \fii_1(x_1,x_2) &= \begin{pmatrix}
                         \cos(\pi/8) & -\sin(\pi/8) \\
                         \sin(\pi/8) & \cos(\pi/8)
                       \end{pmatrix}\begin{pmatrix}
                         0.9 & 0 \\
                         0   & 0.3
                       \end{pmatrix}\begin{pmatrix}
                         x_1 \\ x_2
                       \end{pmatrix} \notag \\
   \fii_2(x_1,x_2) &=  \begin{pmatrix}
                         \cos(\pi/8) & \sin(\pi/8) \\
                         -\sin(\pi/8) & \cos(\pi/8)
                       \end{pmatrix}\begin{pmatrix}
                         0.9 & 0 \\
                         0   & 0.3
                       \end{pmatrix}\begin{pmatrix}
                         x_1 \\ x_2
                       \end{pmatrix}.
  \end{align}
  The set $X$ is a sector with angle $\pi/2$, and functions $\fii_1$ and
  $\fii_2$ map this sector into two flattened sectors inside $X$ such
  that $\fii_1(X) \cap \fii_2(X) = \{ 0 \}$. The OSC is
  therefore satisfied. Since the origin is the only fixed point of
  both mappings, the limit set is nothing but $\{ 0 \}$. This setting
  does not satisfy the weak bounded overlapping, because the amount of
  cylinder sets of the level $n$ including the origin is always $2^n$.
\end{example}

Notice that the similitude IFS is always both conformal and affine. Also if we
consider the cylinder functions introduced before, we notice that the
cylinder function of the similitude IFS is just a special case of both
cylinder functions of conformal IFS and affine IFS.
We could also study more general limit sets in this manner. Falconer
\cite{fa2} has obtained some dimension results into this direction by
using the singular value function for the derivatives of more general
mappings. Using the concept of general IFS, it is possible to use
bi--Lipschitz mappings for defining geometric constructions for which
it is possible easily to determine the Hausdorff dimension of the
limit set.

\begin{example}
  Consider a bi--Lipschitz general IFS satisfying the OSC and the
  boundary condition. Suppose that for each $\iii \in I^*$ there exist balls
  $\underline{B}_\iii$ and $\overline{B}_\iii$ and a constant $C>0$ such that
  \begin{equation} \label{eq:pallot}
    \underline{B}_\iii \subset \fii_\iii(X) \subset \overline{B}_\iii,
  \end{equation}
  $l_\iii(x) \ge Cd(\underline{B}_\iii)$ and $L_\iii(x) \le
  Cd(\overline{B}_\iii)$ as $x \in X$.
  Now, if the ratio between the radii of $\overline{B}_\iii$ and
  $\underline{B}_\iii$ remains bounded, then
  \begin{equation}
    \dimh(E) = t,
  \end{equation}
  where $t \ge 0$ is the unique number satisfying
  \begin{equation}
    \lim_{n \to \infty} \tfrac{1}{n} \log \sum_{\iii \in I^n} r_\iii^t
    = 0
  \end{equation}
  and $r_\iii$ is the radius of either $\overline{B}_\iii$ or
  $\underline{B}_\iii$. This result is easily obtained by first noting
  that the ratio $\ys_\iii / \as_\iii$ is bounded
  and then using Propositions \ref{thm:blifsoscbc}
  and \ref{thm:ifssgs}, Theorem \ref{thm:stabledim} and recalling the
  definition of the topological pressure.
\end{example}

The concept of the general IFS is also crucial in the following example,
which says that the relative positions of cylinder sets are irrelevant
concerning the Hausdorff dimension of the limit set of conformal
systems provided that a sufficient separation condition is satisfied.

\begin{example}
  Consider a conformal IFS satisfying the OSC and the boundary
  condition. Choosing $\psi_\iii^t(\hhh) = \bigl| \fii_\iii'\bigl(
  \pi(\hhh) \bigr) \bigr|^t$, we have $\dimh(E) = \dime(I^\infty)$. In
  this setting the placement of cylinder sets is fixed and their
  relative positions follow from the rule obtained by the mappings
  $\fii_i$. We could now rearrange the placements and ask what happens
  to the Hausdorff dimension of the limit set. We
  define a general IFS by composing our original conformal mappings
  with isometries such that the OSC remains satisfied. Since this
  does not affect our cylinder function and composed mappings are
  still conformal, we will get for the limit set $\tilde{E}$ of this
  general IFS that $\dimh(\tilde{E}) = \dime(I^\infty)$ using
  Propositions \ref{thm:blifsoscbc}, \ref{thm:ifssgs} and Theorem
  \ref{thm:stabledim}.
\end{example}

\section{Dimension of the equilibrium measure}

We say that the Hausdorff dimension of a given Borel probability measure
$m$ is $\dimh(m) = \inf\{ \dimh(A) : A \text{ is a Borel set such that }
m(A)=1 \}$. To check if $\dimh(m)$ $= \dimh(E)$ is one way to examine
how well a given measure $m$ is spread out on a given set $E$. If we
consider similitude and conformal IFS's and we choose cylinder
functions to be the ones introduced in the previous chapter, we notice
using Proposition \ref{thm:ifssgs} and Theorem \ref{thm:stabledim}
that $\dimh(m) = \dimh(E) =: t$ provided that the weak bounded
overlapping is satisfied. Here $m$ and $E$ are the
corresponding $t$--equilibrium measure and the limit set. It is an
interesting question whether we can obtain the same result for the affine
setting. In the following we will prove that at least in ``almost all'' affine
cases this is possible. To do that we first have to prove that the
equilibrium measure $\mu$ is \emph{ergodic}, that is, $\mu(A)=0$ or
$\mu(A)=1$ for every Borel set $A$ for which $A = \sigma^{-1}(A)$.
In the proof we use some ideas found in Zinsmeister \cite{zi}, Bowen
\cite{bo} and Phelps \cite{ph}.

\begin{theorem} \label{thm:ergodisuus}
  There exists an ergodic equilibrium measure.
\end{theorem}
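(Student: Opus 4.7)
The plan is to exhibit an ergodic equilibrium measure by applying Choquet's theorem to the convex set of invariant probability measures, exploiting the fact (already anticipated in the introduction) that $\mu \mapsto h_\mu + E_\mu(t)$ is an affine functional on $\MM_\sigma(I^\infty)$ whose extreme points are ergodic.

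First I would set up the framework. The set $\MM_\sigma(I^\infty)$ is a nonempty, convex, weak-$*$ compact, metrizable subset of $\MM(I^\infty)$, and its extreme points are precisely the ergodic invariant measures (a classical fact). Let
\begin{equation}
  \MM_\sigma^{\mathrm{eq}} = \{ \mu \in \MM_\sigma(I^\infty) : P(t) = h_\mu + E_\mu(t) \},
\end{equation}
which is nonempty by Theorem \ref{thm:equilibrium}. The crucial step is to show that the functional $\mu \mapsto h_\mu + E_\mu(t)$ is affine on $\MM_\sigma(I^\infty)$. For the energy, linearity in $\mu$ is immediate from the integral-like definition together with Proposition \ref{thm:perusomin}(2) (replacing the defining $\liminf$ by a genuine limit). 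For the entropy, affineness is the standard Kolmogorov--Sinai statement: if $\mu = \lambda\mu_1 + (1-\lambda)\mu_2$ with $\mu_1,\mu_2 \in \MM_\sigma(I^\infty)$, then concavity of $H$ gives $h_\mu \ge \lambda h_{\mu_1} + (1-\lambda) h_{\mu_2}$, while the bound $H(\lambda x + (1-\lambda)y) \le \lambda H(x) + (1-\lambda)H(y) + H(\lambda) + H(1-\lambda)$ together with the $\frac{1}{n}$ normalisation in the definition of $h_\mu$ gives the reverse inequality in the limit.

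Once affineness is established, $\MM_\sigma^{\mathrm{eq}}$ is a compact convex face of $\MM_\sigma(I^\infty)$: any convex combination equal to $P(t)$ of two values bounded above by $P(t)$ forces each summand to equal $P(t)$. Now apply Choquet's theorem, valid since $\MM_\sigma(I^\infty)$ is compact, convex and metrizable: pick any $\mu \in \MM_\sigma^{\mathrm{eq}}$ and represent it as
\begin{equation}
  \mu = \int_{\mathrm{ext}\,\MM_\sigma(I^\infty)} \nu \, d\tau(\nu)
\end{equation}
for some Borel probability $\tau$ supported on the ergodic measures. Affineness yields
\begin{equation}
  P(t) = h_\mu + E_\mu(t) = \int \bigl( h_\nu + E_\nu(t) \bigr) \, d\tau(\nu),
\end{equation}
and since the integrand never exceeds $P(t)$ (by Proposition \ref{thm:perusomin} and the variational inequality derived from Jensen), equality forces $h_\nu + E_\nu(t) = P(t)$ for $\tau$-almost every $\nu$. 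Any such $\nu$ is an ergodic equilibrium measure.

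The main obstacle is the affineness of the entropy $h_\mu$: lower semi-continuity and concavity are routine, but the matching upper bound requires exploiting the $\frac{1}{n}$ factor and the elementary inequality for $H$ recalled above. Justifying the use of Choquet's theorem and the identification of extreme points with ergodic measures is standard once metrizability of $\MM(I^\infty)$ is noted. Everything else is bookkeeping on the face $\MM_\sigma^{\mathrm{eq}}$.
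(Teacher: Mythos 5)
Your overall strategy coincides with the paper's: establish that $\mu \mapsto h_\mu + E_\mu(t)$ is affine, invoke Choquet's theorem to decompose an equilibrium measure over the ergodic measures (the extreme points of $\MM_\sigma(I^\infty)$), and conclude that almost every measure in the decomposition is itself an equilibrium measure. Your treatment of affineness is essentially the paper's: linearity of the energy is immediate, and for the entropy the concavity of $H$ together with the elementary bound $H(\lambda x + (1-\lambda)y) \le \lambda H(x) + (1-\lambda)H(y) + xH(\lambda) + yH(1-\lambda)$ gives a defect that is $O(1)$ in $n$, hence vanishes after dividing by $n$. That part is fine.

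The genuine gap is the step ``Affineness yields $P(t) = h_\mu + E_\mu(t) = \int (h_\nu + E_\nu(t))\,d\tau(\nu)$.'' Choquet's theorem only guarantees the barycentre formula $\RR(\mu) = \int \RR(\eta)\,d\tau(\eta)$ for \emph{continuous} affine $\RR$, and $h_\nu + E_\nu(t)$ is not continuous: by Proposition \ref{thm:perusomin} it is an infimum of continuous functions, hence only upper semicontinuous. Affineness alone does not transport the barycentre formula to such a function, and the direction you actually need is the nontrivial one: since the integrand is everywhere $\le P(t)$, the easy inequality $f(\mu) \ge \int f\,d\tau$ (obtained by integrating any continuous affine majorant) gives nothing, and you must prove $f(\mu) \le \int f\,d\tau$. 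This is precisely where the paper spends most of its proof: it shows $\PP$ is upper semicontinuous, uses the Hahn--Banach separation theorem to prove that $\PP + \QQ_n$ equals the pointwise infimum of the continuous affine functions dominating it, then runs a monotone-approximation argument (with a compactness step to handle a possible positive-measure set where the limit exceeds $\PP+\QQ_n$) to pull that infimum inside the integral, and finally lets $n \to \infty$ with dominated convergence. Your closing remark that justifying the use of Choquet's theorem ``is standard once metrizability is noted'' dismisses exactly this part; as written, the proposal asserts the key identity without proof, and the main obstacle is not the affineness of $h_\mu$ but the validity of the barycentre formula for a discontinuous affine functional.
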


\begin{proof}
  Let us first study mappings $\PP,\QQ_n,\QQ :
  \MM_\sigma(I^\infty) \to \R$, for which $\PP(\mu) = h_\mu$,
  $\QQ_n(\mu) = \tfrac{1}{n} \sum_{\iii \in I^n} \mu([\iii])
  \log\psi_\iii^t(\hhh)$ and $\QQ(\mu) = \lim_{n \to \infty}
  \QQ_n(\mu) = E_\mu(t)$. It is clear that each $\QQ_n$ is affine and
  continuous (basically because cylinder sets have empty boundary) and
  $\QQ$ is affine. We will prove that $\PP$ is affine and upper
  semicontinuous.

  Fix $0 \le x_1,x_2 \le 1$ and $\lambda \in [0,1]$ and denote $x =
  \lambda x_1 + (1-\lambda) x_2$. Now using the concavity of the function
  $H(x) = -x \log x$, $H(0)=0$, we have
  \begin{align}
    0 &\le -x\log x + \lambda x_1\log x_1 + (1-\lambda)x_2\log x_2 \notag \\
    &= -\lambda x_1(\log x - \log x_1) - (1-\lambda)x_2(\log x - \log
       x_2) \notag \\ 
    &= -\lambda x_1\bigl( \log x - \log(\lambda x_1) \bigr) -
       (1-\lambda)x_2\bigl( \log x - \log( (1-\lambda)x_2 )
       \bigr) \notag \\
    &\quad\, -\lambda x_1\log\lambda - (1-\lambda)x_2\log(1-\lambda) \\
    &\le -x_1\lambda\log\lambda - x_2(1-\lambda)\log(1-\lambda) \notag \\
    &\le x_1\tfrac{1}{e} + x_2\tfrac{1}{e} \notag
  \end{align}
  since $\log x - \log(\lambda x_1)$ and $\log x -
  \log((1-\lambda)x_2)$ are positive. Hence we get
  \begin{align} \label{eq:arvio}
    0 &\le \sum_{\iii \in I^n} H\bigl( \mu([\iii]) \bigr) - \lambda
    \sum_{\iii \in I^n} H\bigl( \mu_1([\iii]) \bigr) - (1-\lambda)
    \sum_{\iii \in I^n} H\bigl( \mu_2([\iii]) \bigr) \notag \\
    &\le \tfrac{1}{e} \sum_{\iii \in I^n} \mu_1([\iii]) + \tfrac{1}{e}
    \sum_{\iii \in I^n} \mu_2([\iii]) = \tfrac{2}{e},
  \end{align}
  where $\mu_1,\mu_2 \in \MM_\sigma(I^\infty)$ and $\mu = \lambda\mu_1
  + (1-\lambda)\mu_2$. By the convexity of $\MM_\sigma(I^\infty)$ we
  have $\mu \in \MM_\sigma(I^\infty)$ and thus it follows from
  (\ref{eq:arvio}) that
  $h_\mu = \lambda h_{\mu_1} + (1-\lambda) h_{\mu_2}$, and hence,
  $\PP$ is affine. Take next $\eps > 0$ and $\mu \in
  \MM_\sigma(I^\infty)$ and choose $n_0$ big enough such that
  \begin{equation}
    \tfrac{1}{n} \sum_{\iii \in I^n} H\bigl( \mu([\iii]) \bigr) \le
    h_\mu + \tfrac{\eps}{2}
  \end{equation}
  whenever $n \ge n_0$. Now we choose arbitrary $\eta \in
  \MM_\sigma(I^\infty)$ for which
  \begin{equation}
    \tfrac{1}{n} \sum_{\iii \in I^n} H\bigl( \eta([\iii]) \bigr) \le
    \tfrac{1}{n} \sum_{\iii \in I^n} H\bigl( \mu([\iii]) \bigr) +
    \tfrac{\eps}{2}
  \end{equation}
  for some $n \ge n_0$. This choice can be made just by taking $\eta$
  to be close enough to $\mu$ in the weak topology and recalling that
  cylinder sets have empty boundary. Therefore, using Proposition
  \ref{thm:perusomin}(3), we have
  \begin{equation}
    h_\eta \le \tfrac{1}{n} \sum_{\iii \in I^n} H\bigl( \eta([\iii])
    \bigr) \le \tfrac{1}{n} \sum_{\iii \in I^n} H\bigl( \mu([\iii])
    \bigr) + \tfrac{\eps}{2} \le h_\mu + \eps
  \end{equation}
  for some $n \ge n_0$. We have
  established the upper semicontinuity of the mapping $\PP$.

  Denote the set of all ergodic measures of $\MM_\sigma(I^\infty)$
  with $\EE_\sigma(I^\infty)$. Let us now assume contrarily that $\PP +
  \QQ$ cannot attain its supremum with an ergodic measure, that is,
  $(\PP+\QQ)(\eta) < (\PP+\QQ)(\mu)$ for all $\eta \in
  \EE_\sigma(I^\infty)$, where $\mu$ is an equilibrium measure.
  Recalling Theorem 6.10 of Walters \cite{wa}, we know that the set
  $\MM_\sigma(I^\infty)$ is compact and convex and the set of its extreme
  points is exactly the set $\EE_\sigma(I^\infty)$. An extreme point of a
  convex set is a point which cannot be expressed as an average of two
  distinct points. Using Choquet's theorem (see Chapter 3 of
  \cite{ph}), we can get an ergodic
  decomposition for every invariant measure, namely, for each $\mu \in
  \MM_\sigma(I^\infty)$ there exists a Borel regular
  probability measure $\tau_\mu$ on $\EE_\sigma(I^\infty)$ such that
  \begin{equation} \label{eq:erg1}
    \RR(\mu) = \int_{\EE_\sigma(I^\infty)} \RR(\eta) d\tau_\mu(\eta)
  \end{equation}
  for every continuous affine $\RR : \MM_\sigma(I^\infty) \to \R$.

  Denoting now $A_k = \{ \eta \in \EE_\sigma(I^\infty) : (\PP+\QQ)(\mu)
  - (\PP+\QQ)(\eta) \ge \tfrac{1}{k} \}$, where $\mu$ is an
  equilibrium measure, we have $\bigcup_{k=1}^\infty A_k =
  \EE_\sigma(I^\infty)$ and thus $\tau_\mu(A_k) > 0$ for some $k$. Clearly,
  \begin{align}
    (\PP+\QQ)(\mu) - \int_{\EE_\sigma(I^\infty)} &(\PP+\QQ)(\eta)
    d\tau_\mu(\eta)
    = \int_{\EE_\sigma(I^\infty)} (\PP+\QQ)(\mu) - (\PP+\QQ)(\eta)
    d\tau_\mu(\eta) \notag \\
    &\ge \int_{A_k} \tfrac{1}{k} d\tau_\mu(\eta) = \tfrac{1}{k}
    \tau_\mu(A_k)
  \end{align}
  for every $k$ and thus
  \begin{equation} \label{eq:erg0}
    (\PP+\QQ)(\mu) > \int_{\EE_\sigma(I^\infty)} (\PP+\QQ)(\eta)
    d\tau_\mu(\eta).
  \end{equation}
  We will show that this is impossible, and, hence, the contradiction we
  obtain finishes the proof.

  Our goal now is to prove that we can write (\ref{eq:erg1}) also by
  using upper semicontinuous affine functions, particularly with $\PP +
  \QQ_n$. Fix $n \in \N$ and define $\overline{\RR} :
  \MM_\sigma(I^\infty) \to \R$ by setting $\overline{\RR}(\mu) =
  \inf\{ \RR(\mu) : \RR \ge \PP+\QQ_n \text{ is continuous and affine}
  \}$. Let us first prove that for each continuous affine $\RR_1,\RR_2
  > \PP+\QQ_n$ there exists a continuous affine $\RR$ for which
  $\PP+\QQ_n < \RR \le \RR_1,\RR_2$. Since $\PP+\QQ_n$ is affine and
  upper semicontinuous, we notice that the set $D = \{ (\mu,r) : \mu
  \in \MM_\sigma(I^\infty),\; r \le (\PP+\QQ_n)(\mu) \}$ is closed and
  convex. Since both mappings $\RR_i$ are continuous and affine as
  $i=1,2$, we get that both sets $D_i = \{ (\mu,r) : \mu \in
  \MM_\sigma(I^\infty),\; r=\RR_i(\mu) \}$ are compact and
  convex. Observe that the convex hull of the union $D_1 \cup D_2$ is
  compact and disjoint from the set $D$. Now applying the separation
  theorem for convex sets (Corollary 1.2 of \cite{ek}), we notice there
  exists a non--zero continuous real--valued linear functional $l$ on
  $\MM_\sigma(I^\infty) \times \R$ and a real number $\alpha$ such
  that the affine hyperplane
  \begin{equation}
    A = \{ (\mu,r) : \mu \in \MM_\sigma(I^\infty),\; l(\mu,r)=\alpha \}
  \end{equation}
  strictly separates the sets $D$ and the convex hull of $D_1 \cup
  D_2$. Because of the linearity of $l$, for each $\mu \in
  \MM_\sigma(I^\infty)$ there exists exactly one $r$ for which
  $(\mu,r) \in A$. Thus there exists a function $\RR :
  \MM_\sigma(I^\infty) \to \R$ such that $l\bigl( \mu,\RR(\mu) \bigr)
  = \alpha$ as $\mu \in \MM_\sigma(I^\infty)$. The function $\RR$ is
  affine and continuous because the functional $l$
  is linear and continuous. Since now $l(\mu,r) > \alpha$ for every
  $(\mu,r) \in D$ and $l(\mu,r) < \alpha$ for every $(\mu,r)$ in the
  convex hull of $D_1 \cup D_2$ (or the other way around), we have
  $\RR(\mu) > (\PP+\QQ_n)(\mu)$ and $\RR(\mu) < \RR_1(\mu),\RR_2(\mu)$
  for each $\mu \in \MM_\sigma(I^\infty)$, which is exactly what we
  wanted. A similar reasoning implies that
  $\overline{\RR} = \PP+\QQ_n$. Assume contrarily that there exists
  $\nu$ such that $(\PP+\QQ_n)(\nu) < \overline{\RR}(\nu)$. Now the
  set $D$ is disjoint from the compact convex set $\bigl\{
  \bigl(\nu,\overline{\RR}(\nu)\bigr) \bigr\}$ and the separation
  theorem gives us an immediate
  contradiction. We will next show that
  \begin{align} \label{eq:erg2}
    \int_{\EE_\sigma(I^\infty)} (\PP+\QQ_n)(\eta) d\tau_\mu(\eta) =
    \inf\biggl\{ \int_{\EE_\sigma(I^\infty)} \RR(\eta) &d\tau_\mu(\eta)
    : \RR \ge \PP+\QQ_n \notag \\ &\text{is continuous and
    affine} \biggr\}.
  \end{align}
  Let us denote with $\gamma$ the right--hand side of (\ref{eq:erg2})
  and choose a sequence $\{ \RR_i \}_{i \in \N}$ of continuous affine
  mappings greater than or equal to $\PP+\QQ_n$ such that
  \begin{equation}
    \lim_{i \to \infty} \int_{\EE_\sigma(I^\infty)} \RR_i(\eta)
    d\tau_\mu(\eta) = \gamma.
  \end{equation}
  We can assume that this sequence is monotonically decreasing, and
  hence there exists a Borel measurable function $\RR = \lim_{i \to
  \infty} \RR_i$ with $\RR \ge \PP+\QQ_n$ and
  \begin{equation}
    \int_{\EE_\sigma(I^\infty)} \RR(\eta) d\tau_\mu(\eta) = \gamma
  \end{equation}
  using the monotone convergence theorem. If it held that
  $\tau_\mu\bigl(\{ \eta \in \EE_\sigma(I^\infty) : \RR(\eta) >
  (\PP+\QQ_n)(\eta) \}\bigr) > 0$, then there would be real numbers $r$ and $q$
  such that also the set $\{ \eta \in \EE_\sigma(I^\infty) :
  (\PP+\QQ_n)(\eta) < r < q < \RR(\eta) \}$ has positive measure. By
  the Borel regularity, this set contains a compact subset $C$ of
  positive measure. Now for each $\eta \in C$ there is a continuous
  affine mapping $\tilde{\RR} \ge \PP+\QQ_n$ such that
  $\tilde{\RR}(\eta) < r$. Relying now on compactness and continuity,
  we can choose a finite number of them, say,
  $\tilde{\RR}_1,\ldots,\tilde{\RR}_k$ such that for each $\eta \in C$
  there is $1 \le j \le k$ with $\tilde{\RR}_j(\eta) < r$. For each $i
  \in \N$ we choose a continuous affine mapping $\hat{\RR}_i$ such
  that $\PP+\QQ_n < \hat{\RR}_i \le
  \RR_i,\tilde{\RR}_1,\ldots,\tilde{\RR}_k$. Hence $\hat{\RR}_i < r <
  r + \RR - q < \RR_i - (q-r)$ on $C$ and $\hat{\RR}_i \le \RR_i$
  elsewhere. Therefore,
  \begin{equation}
    \gamma \le \int_{\EE_\sigma(I^\infty)} \hat{\RR}_i(\eta)
    d\tau_\mu(\eta) \le \int_{\EE_\sigma(I^\infty)} \RR_i(\eta)
    d\tau_\mu(\eta) - (q-r)\tau_\mu(C),
  \end{equation}
  which finishes the proof of (\ref{eq:erg2}) as we let $i \to \infty$.
  Using now (\ref{eq:erg2}) and (\ref{eq:erg1}), we get that
  \begin{align}
    \int_{\EE_\sigma(I^\infty)} &(\PP+\QQ_n)(\eta) d\tau_\mu(\eta) =
    \inf\biggl\{ \int_{\EE_\sigma(I^\infty)} \RR(\eta)
    d\tau_\mu(\eta) : \RR \ge \PP+\QQ_n \notag \\
    &\qquad\qquad\qquad\qquad\quad\qquad\qquad\qquad\qquad\text{is
    continuous and affine} \biggr\} \\
    &= \inf\bigl\{ \RR(\mu) : \RR \ge \PP+\QQ_n \text{ is continuous
    and affine} \bigr\} = (\PP+\QQ_n)(\mu). \notag
  \end{align}
  Letting $n \to \infty$ and using the dominated convergence theorem,
  we have shown that (\ref{eq:erg0}) cannot happen and thus finished
  the proof.
\end{proof}

The ergodicity of the equilibrium measure is crucial in the following
proposition, which, for example, in the similitude and conformal cases
gives information about the so called local Hausdorff dimension of the
equilibrium measure. Compare it to Proposition 10.4 of Falconer
\cite{fa4}.

\begin{proposition} \label{thm:dimensiobee}
  Suppose $t \ge 0$ and $\mu$ is an ergodic $t$--equilibrium
  measure. Then
  \begin{equation}
    \lim_{n \to \infty}
    \frac{\log\mu([\iii|_n])}{\log\psi^t_{\iii|_n}(\hhh)} = 1 -
    \frac{P(t)}{E_\mu(t)}
  \end{equation}
  for $\mu$--almost all $\iii \in I^\infty$.
\end{proposition}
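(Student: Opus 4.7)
My plan is to combine two classical ergodic theorems: the Shannon--McMillan--Breiman theorem for the numerator and Kingman's subadditive ergodic theorem for the denominator, and then close up using the fact that $\mu$ realises the equilibrium state.

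First, since $\mu$ is ergodic and invariant, the Shannon--McMillan--Breiman theorem gives
\begin{equation*}
  \lim_{n \to \infty} \tfrac{1}{n}\log\mu([\iii|_n]) = -h_\mu
\end{equation*}
for $\mu$-almost every $\iii \in I^\infty$. (Proposition \ref{thm:perusomin}(3) guarantees that the limit defining $h_\mu$ exists, so the usual statement applies.)

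Next I would produce the matching statement for the cylinder function, namely
\begin{equation*}
  \lim_{n \to \infty} \tfrac{1}{n}\log\psi^t_{\iii|_n}(\hhh) = E_\mu(t)
  \quad\text{for } \mu\text{-a.e. } \iii.
\end{equation*}
This is where the main work lies. Set $f_n(\iii) = \log\psi^t_{\iii|_n}(\hhh)$. The subchain rule gives
\begin{equation*}
  \psi^t_{\iii|_{n+m}}(\hhh) \le \psi^t_{\iii|_n}\bigl(\sigma^n(\iii),\hhh\bigr)\,\psi^t_{\sigma^n(\iii)|_m}(\hhh),
\end{equation*}
and the BVP replaces the first factor by $K_t\psi^t_{\iii|_n}(\hhh)$. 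Hence $g_n := f_n + \log K_t$ satisfies the genuine subadditivity $g_{n+m}(\iii) \le g_n(\iii) + g_m(\sigma^n\iii)$. Since condition (3) in the definition of the cylinder function yields $|f_n| \le n\max\{|\log\as_t|,|\log\ys_t|\}$, the integrability hypothesis of Kingman is trivially satisfied. Kingman's theorem together with ergodicity of $\mu$ therefore gives
\begin{equation*}
  \lim_{n \to \infty} \tfrac{1}{n}g_n(\iii) = \inf_{n}\tfrac{1}{n}\int g_n\,d\mu
  = \inf_{n}\tfrac{1}{n}\Biggl(\sum_{\iii \in I^n}\mu([\iii])\log\psi^t_\iii(\hhh) + \log K_t\Biggr)
  = E_\mu(t)
\end{equation*}
$\mu$-almost everywhere, where the last equality is Proposition \ref{thm:perusomin}(2). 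Since $(1/n)g_n$ and $(1/n)f_n$ differ by a null sequence, the same limit holds for $f_n/n$.

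Finally, because $\mu$ is a $t$-equilibrium measure, $P(t) = h_\mu + E_\mu(t)$, so $-h_\mu = E_\mu(t) - P(t)$. Combining the two a.e. limits (and noting that $E_\mu(t) \le \log\ys_t < 0$, so $\log\psi^t_{\iii|_n}(\hhh)$ is eventually negative and the denominator is bounded away from zero after normalisation by $n$), I obtain
\begin{equation*}
  \lim_{n\to\infty}\frac{\log\mu([\iii|_n])}{\log\psi^t_{\iii|_n}(\hhh)}
  = \frac{-h_\mu}{E_\mu(t)} = \frac{E_\mu(t)-P(t)}{E_\mu(t)} = 1 - \frac{P(t)}{E_\mu(t)}
\end{equation*}
for $\mu$-a.e. $\iii$. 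The only real obstacle is verifying the subadditive hypothesis cleanly enough to invoke Kingman; once $g_n$ is introduced, everything else is bookkeeping.
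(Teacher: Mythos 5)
Your proposal is correct and follows essentially the same route as the paper: Shannon--McMillan for $\tfrac{1}{n}\log\mu([\iii|_n]) \to -h_\mu$, Kingman's subadditive ergodic theorem (via the subchain rule and the BVP) for $\tfrac{1}{n}\log\psi^t_{\iii|_n}(\hhh) \to E_\mu(t)$, and the equilibrium identity $P(t)=h_\mu+E_\mu(t)$ to finish. Your explicit verification of the subadditivity and integrability hypotheses for Kingman, and the remark that $E_\mu(t)\le\log\ys_t<0$ keeps the quotient well defined, only make more precise what the paper leaves implicit.
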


\begin{proof}
  Let us first note that due to the invariance of the equilibrium
  measure and theorem of Shannon--McMillan (for example, see
  Chapter 3 of Zinsmeister \cite{zi}) we have
  \begin{equation} \label{eq:entroesitys}
    h_\mu = -\lim_{n \to \infty} \tfrac{1}{n} \log\mu([\iii|_n])
  \end{equation}
  for $\mu$--almost all $\iii \in I^\infty$. We can get a similar kind
  of expression for the energy as well. Indeed, using
  Kingman's subadditive ergodic theorem (for example, see Steele
  \cite{st}) and the BVP, we have
  \begin{align}
    E_\mu(t) &= \lim_{n \to \infty} \tfrac{1}{n} \sum_{\iii \in I^n}
    \mu([\iii])\log\psi^t_\iii(\hhh) \notag \\
    &= \lim_{n \to \infty} \tfrac{1}{n} \sum_{\iii \in I^n}
    \int_{[\iii]} \log\psi^t_\iii(\hhh) d\mu(\hhh) \notag \\
    &= \lim_{n \to \infty} \tfrac{1}{n} \int_{I^\infty}
    \log\psi^t_{\iii|_n} \bigl( \sigma^n(\iii) \bigr) d\mu(\iii) \\
    &= \lim_{n \to \infty} \tfrac{1}{n} \log\psi^t_{\jjj|_n}
    \bigl( \sigma^n(\jjj) \bigr) \notag \\
    &= \lim_{n \to \infty} \tfrac{1}{n} \log\psi^t_{\jjj|_n}(\hhh) \notag
  \end{align}
  for $\mu$--almost all $\jjj \in I^\infty$. Now the claim follows
  easily from the fact
  \begin{equation}
    P(t) = E_\mu(t) + h_\mu.
  \end{equation}
\end{proof}

Now, with the help of this proposition, we can prove the next theorem,
our main tool in studying the Hausdorff dimension of the equilibrium
measure on affine systems.
We define the \emph{equilibrium dimension of a measure $\mu \in
\MM(I^\infty)$} by setting $\dime(\mu) = \inf\{ \dime(A) : A \text{ is
a Borel set such that } \mu(A)=1 \}$.

\begin{theorem} \label{thm:dimensiocee}
  Suppose $P(t)=0$ and $\mu$ is an ergodic $t$--equilibrium
  measure. Then
  \begin{equation}
    \dime(\mu) = t.
  \end{equation}
\end{theorem}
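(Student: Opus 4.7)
\medskip

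The upper bound $\dime(\mu) \le t$ is immediate: since $P(t)=0$, Theorem \ref{thm:eqdim} gives $\dime(I^\infty) = t$, and $I^\infty$ is an admissible set in the infimum defining $\dime(\mu)$.

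For the lower bound I would fix $s < t$ arbitrarily and show that every Borel set $A \subset I^\infty$ with $\mu(A)=1$ satisfies $\GG^s(A) = \infty$. By Proposition \ref{thm:dimensiobee}, for $\mu$-almost every $\iii \in I^\infty$,
\begin{equation*}
  \lim_{n \to \infty} \frac{\log\mu([\iii|_n])}{\log\psi^t_{\iii|_n}(\hhh)} = 1.
\end{equation*}
Set $\delta = t-s > 0$. Condition (3) in the definition of the cylinder function gives
\begin{equation*}
  \log\psi^s_{\iii|_n}(\hhh) - \log\psi^t_{\iii|_n}(\hhh) \ge n\,|\!\log\ys_\delta| \quad\text{and}\quad -\log\psi^t_{\iii|_n}(\hhh) \le n\,|\!\log\as_t|,
\end{equation*}
so if $\eps>0$ is chosen with $\eps|\!\log\as_t| < |\!\log\ys_\delta|$ and we set $c := |\!\log\ys_\delta| - \eps|\!\log\as_t| > 0$, then the bound $\log\mu([\iii|_n]) \le (1-\eps)\log\psi^t_{\iii|_n}(\hhh)$ (valid for all large $n$ on a full $\mu$-measure set) rearranges to
\begin{equation*}
  \mu([\iii|_n]) \le \psi^s_{\iii|_n}(\hhh)\,e^{-cn}.
\end{equation*}

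Next I would define $A_N = \{\iii \in A : \mu([\iii|_n]) \le \psi^s_{\iii|_n}(\hhh)\,e^{-cn} \text{ for all } n \ge N\}$, which is Borel and increases with $N$ to a set of full $\mu$-measure in $A$. Fixing $N$ so large that $\mu(A_N) \ge 1/2$, let $\{[\iii_j]\}_j$ be an arbitrary cover of $A_N$ with $|\iii_j| \ge N$; passing to a maximal incomparable subfamily meeting $A_N$ (which still covers $A_N$ and reduces the sum), we have for each remaining $\iii_j$ some $\hhh_j \in [\iii_j] \cap A_N$, so that $\mu([\iii_j]) \le \psi^s_{\iii_j}(\hhh)e^{-c|\iii_j|}$. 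Summing and using that the cylinders are pairwise disjoint,
\begin{equation*}
  \sum_j \psi^s_{\iii_j}(\hhh) \ge e^{cN}\sum_j \mu([\iii_j]) \ge e^{cN}\mu(A_N) \ge \tfrac{1}{2}e^{cN}.
\end{equation*}
As noted after the definition of $\dime$, replacing the integral $\int \psi^s_\iii \,d\mu_s$ in \eqref{eq:falconer_measure1} by $\psi^s_\iii(\hhh)$ changes $\GG^s$ only by the BVP constant $K_s$, so $\GG^s_N(A) \ge (2K_s)^{-1}e^{cN} \to \infty$. Hence $\GG^s(A) = \infty$, $\dime(A) \ge s$, and letting $s \nearrow t$ finishes the lower bound.

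The main technical obstacle is the bookkeeping in the second paragraph: the estimate from Proposition \ref{thm:dimensiobee} gives $\mu([\iii|_n]) \lesssim \psi^t_{\iii|_n}(\hhh)^{1-\eps}$, and one has to trade the small exponent $1-\eps$ against the gap $t-s$ using condition (3) so that a genuinely exponential gain $e^{-cn}$ with $c>0$ emerges rather than something marginal. The key is that because $\eps$ may be made as small as one likes (independently of $\delta$), while $|\!\log\ys_\delta| > 0$ is fixed once $s < t$ is fixed, the inequality $\eps|\!\log\as_t| < |\!\log\ys_\delta|$ is achievable.
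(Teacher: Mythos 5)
Your proposal is correct and follows essentially the same route as the paper: both combine Proposition \ref{thm:dimensiobee} with condition (3) of the cylinder function to get $\mu([\iii|_n]) \le \psi^s_{\iii|_n}(\hhh)$ (you retain the extra factor $e^{-cn}$) for all large $n$ on a full-measure set, and then run the identical covering argument on the exhausting sets $A_N$ to bound $\GG^s$ from below and let $s \nearrow t$. The only cosmetic difference is that you conclude $\GG^s(A)=\infty$ where the paper settles for $\GG^q(A)>0$; either suffices for $\dime(A)\ge s$.
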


\begin{proof}
  Let us denote
  \begin{equation}
    R = \Bigl\{ \iii \in I^\infty : \lim_{n \to \infty}
    \frac{\log\mu([\iii|_n])}{\log\psi^t_{\iii|_n}(\hhh)} = 1 \Bigr\}
  \end{equation}
  and take an arbitrary Borel set $A \subset I^\infty$ for which
  $\mu(A)=1$. Using Proposition \ref{thm:dimensiobee}, we also have
  $\mu(R \cap A) = 1$. Fix $\iii \in R \cap A$ and $q<t$. Now it
  follows from the definition of the cylinder function, Proposition
  \ref{thm:dimensiobee} and (\ref{eq:entroesitys}) that
  \begin{align}
    \liminf_{n \to \infty}\,
    \frac{\log\mu([\iii|_n])}{\log\psi_{\iii|_n}^q(\hhh)} &\ge
    \lim_{n \to \infty}
    \frac{\log\mu([\iii|_n])}{\log\psi_{\iii|_n}^t(\hhh) + \log
    \ys_{t-q}^{-n}} \notag \\
    &= \frac{1}{1 + \tfrac{1}{h_\mu}\log \ys_{t-q}} > 1.
  \end{align}
  Thus there exists $n_0 = n_0(\iii)$ such that
  \begin{equation} \label{eq:kohtayksi}
    \frac{\log\mu([\iii|_n])}{\log\psi_{\iii|_n}^q(\hhh)} \ge 1
  \end{equation}
  whenever $n \ge n_0$. Denoting $A_k = \{ \iii \in R \cap A :
  n_0(\iii) < k \}$, we have $R \cap A = \bigcup_{k=1}^\infty
  A_k$. Hence, using (\ref{eq:kohtayksi}), we get for each $\iii \in
  A_k$
  \begin{equation} \label{eq:kohtakaksi}
    \frac{\mu([\iii|_n])}{\psi_{\iii|_n}^q(\hhh)} \le 1
  \end{equation}
  whenever $n \ge k$. Take $\{ [\iii_j] \}_j$ to be any cover for
  $A_k$ such that $|\iii_j| > k$ and $[\iii_j] \cap A_k \ne \emptyset$
  for every $j$. We can choose each $\iii_j$ to be of the form
  $\iii|_n$ for some $\iii \in A_k$ and $n \in \N$. Hence by
  (\ref{eq:kohtakaksi})
  \begin{equation}
    \mu(A_k) \le \sum_j \mu([\iii_j]) \le \sum_j \psi_{\iii_j}^q(\hhh)
    \le K_q \sum_j \int_{I^\infty} \psi_{\iii_j}^q(\hhh) d\mu(\hhh),
  \end{equation}
  from which we get $\GG^q(A_k) \ge K_q^{-1}\mu(A_k)$. Now, clearly,
  $\GG^q(R \cap A) = \lim_{k \to \infty}\GG^q(A_k) \ge K_q^{-1}
  \lim_{k \to \infty} \mu(A_k) = K_q^{-1}\mu(R \cap A)$, which gives
  $\GG^q(A) > 0$ and $\dime(A) \ge q$. Since $q<t$ was arbitrary
  as was the choice of the Borel set $A$ of full measure, we conclude
  $\dime(\mu) \ge t$. The proof is finished by recalling Theorem
  \ref{thm:eqdim}.
\end{proof}

In the similitude and conformal cases we obtained the desired dimension result
easily straight from Theorem \ref{thm:stabledim}. For the affine IFS we
can not apply Theorem \ref{thm:stabledim} because in that case it gives
only upper and lower bounds for the Hausdorff dimension of the
equilibrium measure. We will use Theorem \ref{thm:dimensiocee} and the
following result of Falconer \cite{fa1}.

\begin{theorem} \label{thm:falconer}
  Suppose mappings of an affine IFS are of the form $\fii_i(x) = A_ix + a_i$,
  where $|A_i| < \tfrac{1}{3}$, as $i \in I$ and the cylinder function is
  chosen to be the singular value function, $\psi^t_\iii \equiv
  \alpha^t(A_\iii)$. We also assume that $P(t)=0$. Then for
  $\HH^{d\#I}$--almost all $a=(a_1,\ldots,a_{\#I}) \in \R^{d\#I}$ we
  have
  \begin{equation}
    \dime(I^\infty) = \dimh(E)
  \end{equation}
  where $E = E(a)$.
\end{theorem}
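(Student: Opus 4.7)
The plan is to reduce the statement to Falconer's original result in \cite{fa1} by identifying the author's equilibrium dimension with Falconer's affinity dimension under this specific choice of cylinder function.

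First I would verify that $\psi_\iii^t \equiv \alpha^t(A_\iii)$ really is a cylinder function in the sense of Chapter 2. The BVP (condition (1)) is trivial with $K_t = 1$ since $\alpha^t(A_\iii)$ does not depend on $\hhh$. The subchain rule (condition (2)) is exactly Lemma 2.1 of Falconer \cite{fa1}. Condition (3) follows from the fact that the singular values of $A_\iii$ lie in $(0,1)$: writing $\alpha^{t+\delta}(A_\iii)/\alpha^t(A_\iii)$ as a product of at most $d$ singular value powers and using $|A_i| < 1/3$, one obtains uniform bounds $\as_\delta^{|\iii|} \le \alpha^{t+\delta}(A_\iii)/\alpha^t(A_\iii) \le \ys_\delta^{|\iii|}$ with $\as_\delta, \ys_\delta \nearrow 1$ as $\delta \searrow 0$.

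Next I would unfold the definition of $P$. By Proposition \ref{thm:perusomin}(4), the pressure is continuous and strictly decreasing, so the equation $P(t)=0$ has a unique solution, which by Theorem \ref{thm:eqdim} equals $\dime(I^\infty)$. With the present cylinder function, this solution is characterised by
\begin{equation*}
  \lim_{n\to\infty} \tfrac{1}{n}\log \sum_{\iii \in I^n} \alpha^t(A_\iii) = 0,
\end{equation*}
which is precisely the defining equation for the affinity dimension $d(A_1,\ldots,A_{\#I})$ introduced by Falconer. Hence $\dime(I^\infty)$ coincides with Falconer's affinity dimension for this particular cylinder function.

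The proof is then completed by invoking Theorem 5.3 of Falconer \cite{fa1}: under the hypothesis $|A_i| < \tfrac{1}{3}$, for $\HH^{d\#I}$-almost every $a = (a_1,\ldots,a_{\#I}) \in \R^{d\#I}$ the Hausdorff dimension of the self-affine set $E = E(a)$ equals $\min\{d, d(A_1,\ldots,A_{\#I})\}$. Combined with the identification above, this yields $\dimh(E) = \min\{d, \dime(I^\infty)\}$; since $\dime(I^\infty) \le d$ in the regime where the statement is meaningful (and in any event $\dimh(E) \le \dime(I^\infty)$ holds by the upper-bound half of Theorem \ref{thm:stabledim}, which needs no separation condition), the displayed equality follows.

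The main obstacle lies not in our own argument but in Falconer's transversality method, where the restriction $|A_i| < 1/3$ is essential; our contribution here is purely bookkeeping, namely translating his statement into the language of the equilibrium dimension developed in the preceding chapters. The only delicate point on our side is checking condition (3) of the definition of a cylinder function carefully, since $\alpha^t$ is not a pure $t$-th power when $t$ is non-integer, but this is straightforward from the estimate on singular value ratios indicated above.
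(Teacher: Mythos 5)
Your proposal is correct and takes essentially the same route as the paper, which states this result as a direct import of Falconer's theorem from \cite{fa1} and only sketches the underlying ideas (ellipsoid covers for the upper bound, the potential-theoretic characterisation of Hausdorff dimension for the lower bound); the paper has already recorded in Chapter 3 that $\alpha^t(A_\iii)$ is a constant cylinder function with the subchain rule given by Lemma 2.1 of \cite{fa1}, so your verification of conditions (1)--(3) and the identification of the zero of $P$ with Falconer's affinity dimension is exactly the bookkeeping the author relies on implicitly. The only cosmetic difference is your explicit handling of the $\min\{d,\cdot\}$ in Falconer's formula, which the paper suppresses in its statement as well.
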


The main idea of the proof is to use ellipsoids as a covering. Since
the singular value function refers to the size of the corresponding
ellipsoid, this is natural. The upper bound for the Hausdorff
dimension is a straightforward calculation and the lower bound is
obtained using the potential theoretic characterisation of the
Hausdorff dimension. Solomyak has improved the constant $\tfrac{1}{3}$
used in the theorem. He proved that it can be replaced by
$\tfrac{1}{2}$, which, rather surprisingly, he showed to be sharp
in a sense if $|A_i| \ge \tfrac{1}{2} + \eps$ for some $i \in I$ and
for any $\eps > 0$, then the theorem may fail. For details see
Proposition 3.1 of 
\cite{so}. Falconer's theorem is true also for subsets of $E$,
that is, for $\HH^{d\#I}$--almost all $a$ we have $\dime\bigl(
\pi^{-1}(A) \bigr) = \dimh(A)$ whenever $A 
\subset E = E(a)$ is a Borel set. This generalisation follows just by
noting that Lemma 4.2 of \cite{fa1} remains true if the set $I^\infty$
is replaced by an arbitrary Borel set.

Notice that in the
theorem no separation condition of any kind is assumed. However, there are
situations where the equilibrium dimension and the Hausdorff dimension do
not coincide if we just assume $|A_i| < \tfrac{1}{2}$ for every $i \in
I$. For example, there is too much overlapping among the sets
$\fii_\iii(X)$, or these sets are aligned in a way that it is not
possible to obtain economical covers using ellipsoids, and, thus, the
use of the singular value function does not fit. In the theorem all of
these ``bad'' situations are excluded by the statement ``for
$\HH^{d\#I}$--almost all $a$''. It is an interesting question to find
a characterisation for these ``bad'' situations. Hueter and Lalley have
provided in \cite{hl} with checkable sufficient conditions for the theorem
to hold for all $a$.

The following theorem gives a
partially positive answer to the open question proposed by Kenyon and
Peres in \cite{kp}. They asked whether there exists a $T$--invariant
ergodic probability measure on a given compact set, where the mapping
$T$ is continuous and expanding, such that it has full dimension. In
our case the mapping $T$ is constructed by using inverses of the
mappings of IFS.

\begin{theorem}
  Suppose mappings of an affine IFS are of the form $\fii_i(x) = A_ix + a_i$,
  where $|A_i| < \tfrac{1}{2}$, as $i \in I$ and the cylinder function is
  chosen to be the singular value function, $\psi^t_\iii \equiv
  \alpha^t(A_\iii)$. We also assume that $P(t)=0$, $\mu$ is an ergodic
  $t$--equilibrium measure and $m = \mu \pallo \pi^{-1}$. Then for
  $\HH^{d\#I}$--almost all $a=(a_1,\ldots,a_{\#I}) \in \R^{d\#I}$ we
  have
  \begin{equation}
    \dimh(m) = \dimh(E),
  \end{equation}
  where $E = E(a)$.
\end{theorem}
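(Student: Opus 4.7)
The plan is to combine Theorem \ref{thm:dimensiocee} with the Borel subset version of Falconer's theorem (together with Solomyak's improvement of the constant to $\tfrac{1}{2}$) noted in the paragraph following Theorem \ref{thm:falconer}. Since $P(t)=0$, Theorem \ref{thm:eqdim} yields $\dime(I^\infty)=t$. The subset version of Falconer's theorem states that for $\HH^{d\#I}$-almost every parameter $a=(a_1,\dots,a_{\#I})$, the identity $\dime(\pi^{-1}(A))=\dimh(A)$ holds \emph{simultaneously} for every Borel set $A \subset E=E(a)$. Fix one such good $a$; all the remaining arguments take place for this single $a$.

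For the upper bound, note that $m=\mu\pallo\pi^{-1}$ is supported on $E$, so $\dimh(m) \le \dimh(E)$. Applying the subset identity with $A=E$ gives $\dimh(E)=\dime(I^\infty)=t$, hence $\dimh(m) \le t$.

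For the lower bound, let $B \subset E$ be any Borel set with $m(B)=1$. Then $\pi^{-1}(B)$ is a Borel subset of $I^\infty$ with $\mu(\pi^{-1}(B))=m(B)=1$. By Theorem \ref{thm:dimensiocee} (whose proof actually establishes $\dime(A)\ge t$ for any Borel $A$ of full $\mu$-measure), we have $\dime(\pi^{-1}(B))\ge t$. Invoking the subset version of Falconer's theorem again then gives $\dimh(B)=\dime(\pi^{-1}(B)) \ge t$. Taking the infimum over all such $B$ produces $\dimh(m) \ge t$. Combining the two bounds yields $\dimh(m)=t=\dimh(E)$.

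The main obstacle — really the only nontrivial point — is the quantifier order: we need \emph{a single} parameter $a$ from a set of full $\HH^{d\#I}$-measure for which the identity $\dime(\pi^{-1}(A))=\dimh(A)$ holds for \emph{all} Borel subsets $A\subset E(a)$ at once. The ``bare'' Falconer theorem only gives the equality for $A=E$, which would not suffice here, because the candidate sets $B$ with $m(B)=1$ are not known in advance and may depend on $m$ (hence on $a$). It is precisely the uniform-over-$A$ statement, obtained by replacing $I^\infty$ by an arbitrary Borel set in Lemma~4.2 of \cite{fa1} as indicated after Theorem \ref{thm:falconer}, that allows a single Fubini-style exceptional set of $a$'s to absorb all possible full-measure subsets simultaneously.
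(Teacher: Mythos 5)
Your proposal is correct and follows essentially the same route as the paper: combine Theorem \ref{thm:eqdim} and Theorem \ref{thm:dimensiocee} to see that every Borel set of full $\mu$--measure has equilibrium dimension $t=\dime(I^\infty)$, then transfer this to Hausdorff dimension via the subset version of Falconer's theorem. Your explicit discussion of the quantifier order (one full-measure set of parameters $a$ working simultaneously for all Borel subsets of $E(a)$) is a point the paper leaves implicit in the phrase ``and the comments after it,'' but it is the same argument.
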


\begin{proof}
  Due to Theorems \ref{thm:dimensiocee} and \ref{thm:eqdim} we have
  $\dime(A) = \dime(I^\infty)$ whenever $A \subset I^\infty$ has full
  $\mu$--measure. Hence for any $A \subset E$ with full $m$--measure
  we have
  \begin{equation}
    \dimh(A) = \dime\bigl( \pi^{-1}(A) \bigr) = \dime(I^\infty) = \dimh(E)
  \end{equation}
  using Theorem \ref{thm:falconer} and the comments after it.
\end{proof}




\begin{thebibliography}{CMM}
  \bibitem[1]{bg} C.\ Bandt, S.\ Graf,
  \emph{Self--similar sets 7. A characterization of self--similar
  fractals with positive Hausdorff measure},
  Proc. Amer. Math. Soc. \textbf{114} (1992), 995--1001.

  \bibitem[2]{ba} L.\ Barreira,
  \emph{A non-additive thermodynamic formalism and applications to
  dimension theory of hyperbolic dynamical systems},
  Ergodic Theory Dynam. Systems \textbf{16} (1996), 871--927. 

  \bibitem[3]{bo} R.\ Bowen,
  \emph{Equilibrium states and the ergodic theory of Anosov
        diffeomorphisms},
  Lecture Notes in Mathematics, vol. 470, Springer--Verlag, 1975.

  \bibitem[4]{ek} I.\ Ekeland, R.\ Temam,
  \emph{Convex analysis and variational problems},
  North Holland Publishing Company, 1976.

  \bibitem[5]{fa1} K.\ J.\ Falconer,
  \emph{The Hausdorff dimension of self--affine fractals},
  Math. Proc. Cambridge Philos. Soc. \textbf{103} (1988), 339--350.

  \bibitem[6]{fa3} K.\ J.\ Falconer,
  \emph{Fractal geometry, mathematical foundations and applications},
  John Wiley \& Sons, 1990.

  \bibitem[7]{fa2} K.\ J.\ Falconer,
  \emph{Bounded distortion and dimension for nonconformal repellers},
  Math. Proc. Cambridge Philos. Soc. \textbf{115} (1994), 315--334.

  \bibitem[8]{fa4} K.\ J.\ Falconer,
  \emph{Techniques in fractal geometry},
  John Wiley \& Sons, 1997.

  \bibitem[9]{gmw} S.\ Graf, R.\ D.\ Mauldin, S.\ C.\ Williams,
  \emph{The exact Hausdorff dimension in random recursive constructions},
  Mem. Amer. Math. Soc. \textbf{71} (1988).

  \bibitem[10]{hl} I.\ Hueter, S.\ P.\ Lalley,
  \emph{Falconer's formula for the Hausdorff dimension of a
        self--affine set in $\R^2$},
  Ergodic Theory Dynam. Systems \textbf{15} (1995), 77--97.

  \bibitem[11]{hu} J.\ E.\ Hutchinson,
  \emph{Fractals and self--similarity},
  Indiana Univ. Math. J. \textbf{30} (1981), 713--747.

  \bibitem[12]{ka} A.\ Katok, B.\ Hasselblatt,
  \emph{Introduction to the modern theory of dynamical systems},
  Cambridge University Press, 1995.

  \bibitem[13]{kp} R.\ Kenyon, Y.\ Peres,
  \emph{Measures of full dimension on self--affine sets},
  Ergodic Theory Dynam. Systems \textbf{16} (1996), 307--323.

  \bibitem[14]{ma2} P.\ Mattila,
  \emph{Geometry of sets and measures in euclidean space,
        fractals and rectifiability},
  Cambridge University Press, 1995.

  \bibitem[15]{mu} R.\ D.\ Mauldin, M.\ Urba\'nski,
  \emph{Dimensions and measures in infinite iterated function
        systems},
  Proc. London Math. Soc. \textbf{73} (1996), 105--154.

  \bibitem[16]{mo} P.\ A.\ P.\ Moran,
  \emph{Additive functions of intervals and Hausdorff measure},
  Proc. Cambridge Philos. Soc. \textbf{42} (1946), 15--23.

  \bibitem[17]{pr} Y.\ Peres, M.\ Rams, K.\ Simon, B.\ Solomyak,
  \emph{Equivalence of positive Hausdorff measure and the open set
        condition for self-conformal sets},
  Proc. Amer. Math. Soc. \textbf{129} (2001), 2689--2699

  \bibitem[18]{ph} R.\ R.\ Phelps,
  \emph{Lectures on Choquet's theorem},
  Van Nostrand Mathematical Studies, 1966.

  \bibitem[19]{re} Y.\ G.\ Reshetnyak,
  \emph{Stability theorems in geometry and analysis},
  Kluwer Academic Publishers, 1994.

  \bibitem[20]{ru} D.\ Ruelle,
  \emph{Thermodynamical formalism}, Addison Wesley, 1978.

  \bibitem[21]{sc} A.\ Schief,
  \emph{Separation properties for self--similar sets},
  Proc. Amer. Math. Soc. \textbf{122} (1994), 111--115.

  \bibitem[22]{so} B.\ Solomyak,
  \emph{Measure and dimension for some fractal class},
  Math. Proc. Cambridge Philos. Soc. \textbf{124} (1998), 531--546.

  \bibitem[23]{st} J.\ M.\ Steele,
  \emph{Kingman's subadditive ergodic theorem},
  Ann. Inst. Henri Poincar\'e \textbf{25} (1989), 93--98.

  \bibitem[24]{su} D.\ Sullivan,
  \emph{Conformal dynamical systems}, Lecture Notes in Mathematics,
  vol. 1007, Springer--Verlag, 1983.

  \bibitem[25]{wa} P.\ Walters,
  \emph{An introduction to ergodic theory},
  Graduate Texts in Mathematics, vol. 79, Springer--Verlag, 1982.

  \bibitem[26]{zi} M.\ Zinsmeister,
  \emph{Formalisme thermodynamique et syst\`emes dynamiques
        holomorphes},
  Panoramas et Synth\`eses, 1996.

\end{thebibliography}
\end{document}